\def\R{\mathbb{R}}
\def\Z{\mathbb{Z}}
\def\T{\mathbb{T}}
\numberwithin{equation}{section}
\newtheorem{thm}{Theorem}[section]
\newtheorem{lem}{Lemma}[section]
\newtheorem{prop}{Proposition}[section]
\newtheorem{cor}{Corollary}[section]
\newtheorem{remark}{Remark}[section]
\newcommand{\Extend}[5]{\ext@arrow0099{\arrowfill@#1#2#3}{#4}{#5}}
\begin{document}

\setcounter{page}{1}

\title[Global rough solution for periodic gKdV]{Global well-posedness for periodic generalized\\ Korteweg-de Vries equation}

\author{Jiguang Bao}
\address{School of  Mathematical Sciences, Beijing Normal University, Laboratory of Mathematics and Complex Systems,
Ministry of Education, Beijing 100875, P.R.China}
\thanks{The first author was partially supported by the NSF of China (No.11371060).}
\email{jgbao@bnu.edu.cn}
\author{Yifei Wu}
\email{yifei@bnu.edu.cn}
\thanks{The second author was partially supported by the NSF of China (No. 11101042) and the China Postdoctoral Science
Foundation (No.2012T50068).}

\subjclass[2000]{Primary 35Q53; Secondary 42B35}

\date{}

\keywords{generalized KdV equation, Bourgain space, well-posedness, I-method}

\maketitle

\begin{abstract}\noindent
In this paper, we show the global well-posedness for periodic gKdV equations in the space
$H^s(\mathbb{T})$, $s\ge \frac12$ for quartic case,  and $s> \frac59$ for quintic case.  These improve the previous results
of Colliander et al in 2004. In particular, the result is sharp for quintic case. The main approaches are the I-method combining with the resonance decomposition, and a bilinear Strichartz estimate in periodic setting.
\end{abstract}


 \baselineskip=20pt

\section{Introduction}
In this paper, we consider the global well-posedness of the Cauchy problem for the periodic generalized Korteweg-de Vries equations
(gKdV):
\begin{equation}\label{gkdv}
   \left\{ \aligned
    &\partial_t u +  \partial^3_{x} u  =F(u)_x, \quad  (t,x)\in [0,T]\times \T,\\
    &u(0,x)=\phi(x), \quad x\in \T,
   \endaligned
  \right.
 \end{equation}
where $u$ is an unknown real function defined on $[0,T]\times \T$, $\phi$ is a given real-valued function, $F$ is a polynomial of degree $k+1$, and $\T=\R/\Z$
is the circle. For simplicity, we may assume that $F(u)=\mu u^{k+1}$. When $\mu =1$, the equation in (\ref{gkdv}) is
referred to ``defocusing", while when $\mu=-1$ it is referred to ``focusing". For $k=1$ and $k=2$, they are called by the KdV and modified KdV equations, respectively.
These two equations are completely integrable. For $k\ge 3$, they are classified as the generalized KdV equations, which are not completely integrable in general.
In particular, the quartic case $k=3$ and the quintic case $k=4$ are of special interest, which are regarded as the mass-subcritical and mass-critical equations.

The Cauchy problem (\ref{gkdv}) has been widely studied. The periodic KdV and periodic modified KdV equations are well-posedness in $H^s(\T)$ for any $s\geq -\frac{1}{2}$ and $s\ge \frac12$ respectively.  See Kenig, Ponce and Vega \cite{KPV-96} (also \cite{Bourgain, Bourgain-97, KPV-93-kdv} and \cite{KwOh-IMRN} for unconditional well-posedness of modified KdV equation) for local results and Colliander, Keel, Staffilani, Takaoka and Tao (I-team) \cite{CKSTT-03-KDV} for the global results. These above ranges of $s$ are \emph{sharp} in the sense of the uniformly continuous dependence
of the solution on the data, see \cite{KPV-01}. One can also see Kappeler and Topalov \cite{KaTo-CPDE, KaTo-Duke} and the reference therein, for global $C^0$-well-posedness for rougher data. When $F$ is a general polynomial of degree $k+1\geq 4$, it was  shown by I-team \cite{CKSTT-04-gkdvT} the local well-posedness in $H^s(\T)$ for any
$s\geq \frac{1}{2}$. The authors \cite{CKSTT-04-gkdvT} also showed the analytic ill-posedness in $H^s(\T)$ for $s<\frac12$.
So in this sense, the index $\frac12$ is sharp for local well-posedness in Sobolev space $H^s(\T)$.
Moreover, they established the global well-posedness results in $H^s(\T)$
for $s>\frac{13}{14}-\frac{2}{7k}$ (in the defocusing case when $k>4$, which is mass-supercritical).
In particular, they proved that the quartic and quintic gKdV equations are global well-posedness in $H^s(\T)$ whenever
$s>\frac{5}{6}$ and $s>\frac{6}{7}$ respectively. But there exist some gaps to the local threshold $s=\frac{1}{2}$.
In the present paper, we improve the indices and obtain the optimal one for $k=3$ as expected in \cite{CKSTT-04-gkdvT}, while for $k=4$ there is still room to improve to a sharp result.
For the related results in real line case, we just refer to
\cite{CKSTT-03-KDV, Guo, KPV-96,K, Miao-Shao-Wu-Xu:2009:gKdV, Tao-07} for a few of them.
Now our main result can be stated as follows.
\begin{thm} \label{thm:main}
The Cauchy problems of defocusing generalized KdV equations
\begin{equation}\label{gkdv-4}
   \left\{ \aligned
    &\partial_t u +  \partial^3_{x} u  =\partial_x(u^{k+1}), \quad  (t,x)\in [0,T]\times \T,\\
    &u(0,x)=\phi(x), \quad x\in \T
   \endaligned
  \right.
 \end{equation}
are globally well-posed in $H^s(\T)$ with $s\ge \frac12$ for $k=3$, and $s>\frac59$ for $k=4$.
\end{thm}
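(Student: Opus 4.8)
The plan is to prove Theorem~\ref{thm:main} by the I-method (almost conservation law) technique, upgrading the local well-posedness of \cite{CKSTT-04-gkdvT} at $s=\tfrac12$ to a global result, and refining the almost conservation argument for $k=4$ using a resonance decomposition. First I would recall that the equation \eqref{gkdv-4} formally conserves the mass $\|u\|_{L^2}^2$ and the Hamiltonian
\begin{equation*}
E(u)=\frac12\int_{\T}(\partial_x u)^2\,dx-\frac{\mu}{k+2}\int_{\T}u^{k+2}\,dx,
\end{equation*}
so that in the defocusing case $\mu=1$ the quantity $E(u)$ controls $\|u\|_{\dot H^1}^2$ up to a lower-order term handled by Gagliardo--Nirenberg (the mass-subcritical/critical structure guarantees this term is controlled for $k=3,4$). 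For $s<1$ one introduces the Fourier multiplier operator $I=I_N$ that is the identity on low frequencies $|\xi|\le N$ and damps like $(|\xi|/N)^{s-1}$ on high frequencies, and studies the modified energy $E(Iu)$. The scheme is: (i) a rescaled/modified local theory showing $Iu$ exists on a time interval of length $\sim 1$ with $\|Iu\|_{X^{1,1/2}}\lesssim \|I\phi\|_{H^1}$; (ii) an almost conservation estimate $|E(Iu)(t)-E(Iu)(0)|\lesssim N^{-\beta}$ for some $\beta=\beta(s)>0$ on each such interval; (iii) iterating over $\sim T N^{\beta}$ intervals and optimizing in $N$ to reach any fixed $T$ provided $s$ lies above the relevant threshold.

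The heart of the matter is step~(ii). One computes $\frac{d}{dt}E(Iu)$; the cubic-order terms in the energy and the transport structure cause the quadratic part of $\frac{d}{dt}E(Iu)$ to telescope, leaving a multilinear expression in which the ``commutator'' between $I$ and the nonlinearity is the only surviving contribution — this vanishes to high order on the frequency region where all frequencies are $\lesssim N$, so one only pays when at least two frequencies are $\gtrsim N$. Estimating these error terms requires multilinear estimates in the periodic Bourgain spaces $X^{s,b}(\T)$, and the key new analytic input is a \emph{bilinear Strichartz estimate} in the periodic setting (as advertised in the abstract), which compensates for the weaker dispersion/number-theoretic losses on $\T$ compared to $\R$. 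For the quartic case $k=3$ the nonlinearity $\partial_x(u^4)$ produces a quintilinear error term whose bound, together with the bilinear estimate, gives $\beta>0$ all the way down to $s=\tfrac12$, which combined with the sharp local theory yields global well-posedness at the endpoint $s=\tfrac12$. For the quintic case $k=4$ the sextilinear error term is more delicate; here I would perform a \emph{resonance decomposition}, splitting the multiplier into a resonant part (nearly vanishing modulation) and a non-resonant part, and further modify the energy $E(Iu)$ by adding a correction term that cancels the worst resonant interactions, à la the ``second-generation'' I-method; the non-resonant piece is handled by integration by parts in time (exploiting the size of the modulation) plus the bilinear Strichartz estimate. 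Optimizing the resulting $\beta(s)$ yields the threshold $s>\tfrac59$.

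The main obstacle I expect is precisely controlling the resonant sextilinear interactions for $k=4$: on the torus the resonance set $\{\xi_1+\cdots+\xi_6=0,\ \xi_1^3+\cdots+\xi_6^3=0\}$ is genuinely nontrivial (unlike the cubic/transport resonances, which vanish for $k=3$), so the correction-term bookkeeping — verifying that the modified energy is still comparable to $\|Iu\|_{H^1}^2$ and that its time derivative gains a power $N^{-\beta}$ with $\beta>0$ for $s>\tfrac59$ — is where the argument is tight and where the value $\tfrac59$ comes from. A secondary technical point is the local theory for $Iu$ at $H^1$: one must check that $I$ essentially commutes with the contraction-mapping argument of \cite{CKSTT-04-gkdvT}, i.e. that the interpolation/Leibniz-type bounds $\|I(uv)-\,(\text{main term})\|$ behave well in $X^{1,b}(\T)$; this is by now standard but requires care with the endpoint $b=\tfrac12$ on the circle. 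Finally, to reach arbitrary $T$ one rescales $u_\lambda(t,x)=\lambda^{-2/k}u(\lambda^{-3}t,\lambda^{-1}x)$ (note $\T$ becomes $\T_\lambda=\R/\lambda\Z$), tracks how $\|I\phi_\lambda\|_{H^1}$ and the iteration count depend on $\lambda$ and $N$, and closes the bootstrap by choosing $N=N(T)$ large; I would present this rescaling lemma first so the subsequent estimates can be stated uniformly in $\lambda$.
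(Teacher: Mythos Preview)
Your proposal has the right overall architecture (rescaling to $\T_\lambda$, $I$-operator, modified local theory, almost conservation, iteration) and correctly identifies the bilinear Strichartz estimate and the resonance decomposition as the two new inputs. However, there is a substantive gap in how you allocate these tools between the two cases.

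You propose to handle $k=3$ with only the first modified energy $E(Iu)$ together with the bilinear estimate, reserving the resonance decomposition and the second-generation correction term for $k=4$. This will not reach $s=\tfrac12$ for $k=3$. Already for $k=3$ the quotient $M_{k+2}/\alpha_{k+2}$ arising in a naive second-energy construction is genuinely singular (it is bounded only for $k=1,2$), so one cannot integrate by parts in time globally; and if one stays with the first modified energy, the natural bound $|M_{k+2}|\lesssim m(\xi_1^*)^2|\xi_1^*|^2|\xi_3^*|$ on the $(k{+}2)$-linear error is exactly what \cite{CKSTT-04-gkdvT} used to get $s>\tfrac56$. The paper applies the resonance decomposition \emph{uniformly} for $k=3,4$: one isolates a non-resonant set $\Omega\subset\Gamma_{k+2}$ on which $|M_{k+2}|\lesssim|\alpha_{k+2}|$, builds the correction term (hence $E_I^2$) from that piece, and on the complementary resonant set proves refined pointwise bounds such as $|M_{k+2}|\lesssim m_1^2|\xi_1^*||\xi_3^*|^2$ or $m_1^2|\xi_1^*|^2|\xi_5^*|$ in various sub-regions, strictly better than the natural bound. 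It is these refined resonant bounds, combined with the bilinear estimate, that yield the increment $K=N^{-3+}+N^{-2+}\lambda^{-1/2}$ and close at $s=\tfrac12$ for $k=3$. Note also that you have the roles reversed: the correction term is attached to the \emph{non}-resonant piece (where the integration by parts is legitimate), while the resonant piece is estimated directly.

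A second point concerns where the threshold $\tfrac59$ actually comes from. You locate it in the resonant sextilinear interactions, but in the paper's numerology the binding constraint is that the periodic bilinear Strichartz estimate only gains $C(N^2,\lambda)\sim\lambda^{-1/2}$ rather than $N^{-1}$ (because $\lambda\ll N^2$ for the relevant scaling). This enters $K$ through the $N^{-2+}\lambda^{-1/2}$ term, and after balancing $\delta\,\lambda^{-3}K^{-1}\gtrsim 1$ against $\lambda\sim N^{(1-s)/(2/k+s-1/2)}$ one obtains $s>\tfrac59$ when $k=4$; for $k=3$ the same inequalities are satisfied for all $s\ge\tfrac12$, which is why the endpoint is reached there.
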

\begin{remark}
Similar results as Theorem \ref{thm:main} also hold for the focusing equations with the suitable small initial data which guarantees the positivity of the energy.

Moreover, for general nonlinearity, our method here is also available. However,
compared with the local theory, the global result for $k>4$ falls far short of expectations. Even for the quintic case, it
still has gap from the sharp local result.
\end{remark}

The main approach used here is I-method introduced by I-team, see  \cite{CKSTT-02-DNLS, CKSTT-03-KDV,CKSTT-04-gkdvT}  for examples.
Also, we shall use the resonant decomposition argument given in  \cite{Bourgain-04, CKSTT-08}, see also \cite{Miao-Shao-Wu-Xu:2009:gKdV, MWX-DNLS,LWX-NLS,LWX-NLS-T} for more related argument. It is known that the problem
(\ref{gkdv-4}) obeys the conserved Hamiltonian
$$
E(u):=\int_\T \Big(\frac{1}{2}u_x^2+\frac{1}{k+2}u^{k+2}\Big)\,dx.
$$
The scheme of the proof in \cite{CKSTT-04-gkdvT} is to construct the ``almost conservation law" of the ``first" modified
energy $E(Iu)$ by using the I-operator.  Then the global result can be obtained by iteration. Moreover, some suitable
``correction-term" may be added to the first modified energy $E(Iu)$. If this is done, one may define the ``second"
modified energy. Then the better energy increment and global result could be gotten. See \cite{CKSTT-03-KDV}
for a classical application. However, for the gKdV equation, one may note that it is hard to define the second modified
energy in a naive way, via adding a ``correction-term" to $E(Iu)$ directly. The reason is that the
multiplier, introduced to obtain the second modified energy, is singular in the sense that its $L^\infty$-norm is
infinity in a nontrivial set. The same thing happens in the mass-critical gKdV equation in the real line case,
which was considered in \cite{Miao-Shao-Wu-Xu:2009:gKdV}. To get around the difficulty, we employ the resonant
decomposition method. More precisely, we will split the multiplier into ``resonant piece" and ``non-resonant piece", and
then treat them separately. For ``non-resonant piece", we add a ``correction-term"  to define the second modified energy,
while for ``resonant piece",  we prove that it is relatively small. However in periodic case, compared with the real line case in
\cite{Miao-Shao-Wu-Xu:2009:gKdV}, the decomposition should be finer. In fact, we need smaller control in ``resonant piece", because of
the weaker Strichartz estimate in the periodic setting.

Moreover, the result in periodic setting is weaker than the analogous in the real-line setting, and the argument above only is also not enough to obtain the sharp global theory. A main difficulty for the periodic problem is the absence of the bilinear Strichartz estimate. More precisely, in real line setting, one has
\begin{equation*}
\left\|\int\!\!\!\! \int_{\xi=\xi_1+\xi_2}
e^{ix\xi_1+it\xi_1^3}e^{ix\xi_2+it\xi_2^3}
|\xi_1^2-\xi_2^2|^{\frac{1}{2}}\widehat{\phi_1}(\xi_1)\widehat{\phi_2}(\xi_2)\,d
\xi_1 d \xi\right\|_{L^2_{xt}}\lesssim
\|\phi_1\|_{L^2}\|\phi_2\|_{L^2}.
\end{equation*}
But it doesn't work in periodic setting. So a novelty in this paper is a variant bilinear Strichartz estimate in the periodic case.

In
\cite{CKSTT-03-KDV}, the authors established a type of Strichartz estimate on the long period linear-flow,
which is available to the rescaled problem. The kind of the bilinear Strichartz estimate for the periodic nonlinear Schr\"odinger
equation was
established simultaneously in \cite{BuGeTz, DPST-DCDS-07},
and see also \cite{LWX-NLS-T} for an improvement version to the one in \cite{DPST-DCDS-07}. Inspired from these papers, we
obtain the following bilinear Strichartz estimate on Airy equation here,
\begin{equation*}
\left\|\int\!\!\!\! \int_{\xi=\xi_1+\xi_2}
e^{ix\xi_1+it\xi_1^3}e^{ix\xi_2+it\xi_2^3}
C(\xi_1,\xi_2,\lambda)\widehat{\phi_1}(\xi_1)\widehat{\phi_2}(\xi_2)\,
\big(d\xi_1\big)_\lambda \big(d\xi\big)_\lambda\right\|_{L^2_{xt}}\lesssim
\|\phi_1\|_{L^2}\|\phi_2\|_{L^2},
\end{equation*}
where $(d\xi)_\lambda$ denotes the normalized counting measure defined in Section 2, and the function
$$
C(\xi_1,\xi_2,\lambda)\rightarrow |\xi_1^2-\xi_2^2|^{\frac{1}{2}}, \quad
\mbox{as }\lambda \rightarrow \infty.
$$
It matches the bilinear Strichartz estimate in real line case when the period (or the scaling parameter) tends to infinity.
However, for the rescaled problems under study, the efficacy of the bilinear Strichartz estimate in the periodic case is exactly weaker than the one in the real line case.
See Remark \ref{rem:Bi-Str} below.
This is different from the one on the periodic Schr\"odinger equation (see \cite{DPST-DCDS-07, LWX-NLS-T}). 
%


\subsection{Outline of the proof}

\subsubsection{Working space}

First, we use the gauge transformation introduced in \cite{Bourgain, Staffilani}. Let
$$
\mathcal G u(t,x)=u\Big(t,x+\int_0^t\!\!\!\int_\T u^k\ dxds\Big).
$$
Then we denote functional space $X^s$ as our working space, which is equipped by the norm,
\begin{equation}\label{workingspace}
\|u\|_{X^s}:=\|\mathcal G u\|_{Y^s},
\end{equation}
where $Y^s$ is the standard (but slightly modified) Bourgain space defined in Section 2. Moreover, we denote $X^s(I)$ to be its restricted space on time interval $I$.

In \cite{CKSTT-04-gkdvT}, I-team employed this gauge transformation to avoid
a nontrivial resonance in the original equation. Under this transform, the function $\mathcal G u$ satisfies the equation
\begin{equation}\label{eqs:v}
\partial_t v+\partial_{x}^3v=\mathbb{P}\big[\mathbb{P}(v^k)v_x\big],
\end{equation}
where $\mathbb P$ denotes the orthogonal projection onto mean zero functions,
$$
\mathbb{P}f=f-\int_\T f\,dx,
$$
that is, $\widehat{\mathbb{P}f}(0)=0$. Then the authors considered $\mathcal G u$ instead to prove the sharp local well-posedness via multilinear estimates.

However,  to study the global theory, we can not employ the forms of \eqref{eqs:v} because it breaks the symmetries, which gives the bad form of the modified energies and thus against finer multiplier estimates, see Step 3 below.
So we still consider the original equation, but use the gauged norm \eqref{workingspace} which is dependent upon the local theory. 
This causes difficulties in the multiplier estimates. Fortunately, 
this difficulty can be overcome by using some good properties of the gauge transformation.
For this reason, one shall be careful in the usage of the Bourgain norm.

\subsubsection{$I$-operator}
Let $N\gg 1$ be fixed, and the Fourier multiplier operator
$I_{N,s}$ be defined as
\begin{equation}
\widehat{I_{N,s}f}(\xi)=m_{N,s}(\xi)\hat{f}(\xi).\label{I}
\end{equation}
Here the multiplier $m_{N,s}(\xi)$ is a smooth, monotone function
satisfying $0<m_{N,s}(\xi)\leq 1$ and
\begin{equation}
m_{N,s}(\xi)=\biggl\{
\begin{array}{ll}
1,&| \xi|\leq N,\\
N^{1-s}| \xi|^{s-1},&| \xi|>2N.\label{m}
\end{array}
\end{equation}
Usually, we denote $I_{N,s}$ and $m_{N,s}$ as $I$ and $m$
respectively for short if there is no confusion. Then
\begin{equation}
    \|f\|_{H^s}\lesssim \|I_{N,s}f\|_{H^1}
\lesssim
    N^{1-s}\|f\|_{H^s}.\label{II}
\end{equation}

\subsubsection{Sketch the proofs}

Now we sketch the proof of Theorem \ref{thm:main} in the following steps.

\texttt{Step 1: Rescaling.}

We rescale the problem by writing
$$
u_\lambda(t,x)=\lambda^{-\frac{2}{k}}u(t/\lambda^3,x/\lambda);\quad
\phi_{\lambda}(x)=\lambda^{-\frac{2}{k}}\phi(x/\lambda),
$$
then $u_\lambda$ satisfies that
\begin{equation}\label{gkdvR}
   \left\{ \aligned
    &\partial_t u_\lambda +  \partial^3_{x} u_\lambda =(u_\lambda^{k+1})_x, \quad (t,x)\in [0,\lambda^3T]\times [0,\lambda],\\
    &u_\lambda(0,x)=\phi_\lambda(x),\quad x\in [0,\lambda].
   \endaligned
  \right.
 \end{equation}
Moreover, the solution of (\ref{gkdv}) $u$ exists on $[0,T]$ if and
only if $u_\lambda$ exists on $[0,\lambda^3T]$. On the other hand,
we get that for any $q\geq 1$ and
$s\geq 0$,
\begin{align}\label{9.51}
\|\phi_\lambda\|_{L^q_x}
=\lambda^{\frac{1}{q}-\frac{2}{k}}\|\phi\|_{L^q_x}; \quad
\|\phi_{\lambda}\|_{\dot{H}^s} = \lambda^{\frac{1}{2}-\frac{2}{k}-s}
 \|\phi\|_{\dot{H}^s}.
\end{align}
Hence, by (\ref{II}) and $m(\xi)\leq 1$,
\begin{equation*}\aligned
E(I\phi_\lambda)=&\; \frac{1}{2}\|\partial_x
I\phi_\lambda\|_{L^2}^2+\frac{1}{k+2}\|I\phi_\lambda\|_{L^{k+2}}^{k+2}\\
\lesssim&\;
N^{2-2s}\|\phi_{\lambda}\|_{\dot{H}^s}^2+\|\phi_\lambda\|_{L^{k+2}}^{k+2}\\
\lesssim&\; N^{2-2s}/\lambda^{\frac{4}{k}+2s-1}\cdot
\|\phi\|_{H^s}+\lambda^{-1-\frac{4}{k}}\|\phi\|_{L^{k+2}}^{k+2}.
\endaligned\end{equation*}
To normalize the rescaled initial data, we choose
\begin{equation}\label{lambda}
\lambda\sim N^{\frac{1-s}{\frac{2}{k}+s-\frac{1}{2}}}.
\end{equation}
Then,
\begin{equation}\label{B of Iu}
 \|I\phi_\lambda\|_{H^1},\,\,E(I\phi_\lambda) \lesssim 1.
\end{equation}

\texttt{Step 2: Local theory  for rescaled solutions}. We need the following local theory,
\begin{lem}[\cite{CKSTT-04-gkdvT}] \label{lem:modified-local}
Let $s\geq \frac{1}{2}$ and $\phi$ satisfy $\|I\phi_\lambda\|_{H^1}\lesssim 1$, then Cauchy problem (\ref{gkdvR}) is
locally well-posed on the interval $[0,\delta]$
with the lifetime
\begin{equation}
 \delta\sim \lambda^{-\epsilon}\label{delta}
\end{equation}
for some small $\epsilon>0$. Furthermore, the solution satisfies the estimate
\begin{equation}
    \|Iu_\lambda\|_{X^1([0,\delta])}
    \lesssim
    \|I\phi_\lambda\|_{H^1}.\label{LSE}
\end{equation}
\end{lem}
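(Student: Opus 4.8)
Modulo the scaling parameter $\lambda$, this is the sharp local theory of Colliander--Keel--Staffilani--Takaoka--Tao \cite{CKSTT-04-gkdvT} carried to the $H^1$ level by the operator $I=I_{N,s}$. I would run a Bourgain-space contraction for the equation solved by the gauge transform $v_\lambda:=\mathcal G u_\lambda$, namely the renormalized equation \eqref{eqs:v} (with $\mathbb P$ the projection onto mean-zero functions), which is the form on which the multilinear estimates are clean and for which $Y^s$ is the natural space. Concretely, one applies $I$ to the Duhamel formulation
\[
v_\lambda(t)=e^{-t\partial_x^3}\phi_\lambda+\int_0^t e^{-(t-t')\partial_x^3}\,\mathbb P\big[\mathbb P(v_\lambda^k)\,\partial_x v_\lambda\big](t')\,dt'
\]
and solves for $Iv_\lambda$ by contraction on a ball of $Y^1([0,\delta])$. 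Since at each fixed time $\mathcal G$ acts on the spatial variable as a unit-modulus Fourier multiplier, applying $I$ can be interchanged with applying $\mathcal G$ up to errors absorbed by the good properties of the gauge transform, so that $\|Iu_\lambda\|_{X^1([0,\delta])}$ is controlled by $\|Iv_\lambda\|_{Y^1([0,\delta])}$; this interchange is one of the technical subtleties alluded to in the introduction.

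The inputs are: (i) the linear estimates on the torus $\mathbb T_\lambda=\R/\lambda\Z$, i.e.\ boundedness of the Airy group and of the Duhamel operator on $Y^s$, with a gain $\delta^{\theta}$, $\theta=\theta(s)>0$, upon restriction to a time window of length $\delta$; (ii) the sharp multilinear estimate of \cite{CKSTT-04-gkdvT} for $\mathbb P\big[\mathbb P(\prod_{j=1}^{k}v^{(j)})\,\partial_x v^{(k+1)}\big]$ at regularity $s=\tfrac12$; and (iii) an interpolation (``extra smoothing'') step transferring (ii) to the $I$-modified $H^1$ setting. Step (iii) is the standard mechanism of the $I$-method: since $m=m_{N,s}$ is bounded, monotone and slowly varying and the inhomogeneous order $1-s$ attached to $I$ is at most $\tfrac12$, the usual pointwise comparisons between $m(\xi)$ and $\prod_{j}m(\xi_j)$ for $\xi=\xi_1+\cdots+\xi_{k+1}$ (valid when $s\ge\tfrac12$) allow one to feed (ii) into the nonlinear term and obtain
\[
\Big\|\int_0^t e^{-(t-t')\partial_x^3}I\,\mathbb P\big[\mathbb P(\textstyle\prod_{j=1}^{k}v^{(j)})\,\partial_x v^{(k+1)}\big]\,dt'\Big\|_{Y^1([0,\delta])}\lesssim \lambda^{C\epsilon_0}\,\delta^{\theta}\prod_{j=1}^{k+1}\|Iv^{(j)}\|_{Y^1([0,\delta])},
\]
together with the analogous Lipschitz bound for differences. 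Here $\lambda^{C\epsilon_0}$, $C=C(k)$, records the loss (by an arbitrarily small power $\lambda^{\epsilon_0}$ of the period, for each $\epsilon_0>0$) that the long-period Strichartz estimates on $\mathbb T_\lambda$ --- of the type established in \cite{CKSTT-03-KDV} --- suffer relative to the real line.

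Combining (i)--(iii) gives
\[
\|Iv_\lambda\|_{Y^1([0,\delta])}\le C_0\|I\phi_\lambda\|_{H^1}+C_0\,\lambda^{C\epsilon_0}\,\delta^{\theta}\,\|Iv_\lambda\|_{Y^1([0,\delta])}^{k+1}.
\]
Since $\|I\phi_\lambda\|_{H^1}\lesssim1$, choosing $\delta\sim\lambda^{-\epsilon}$ with $\epsilon:=2C\epsilon_0/\theta$ makes $C_0\lambda^{C\epsilon_0}\delta^{\theta}\ll1$ on the ball of radius $2C_0\|I\phi_\lambda\|_{H^1}$, so the Duhamel map is a contraction there; this produces the solution, uniqueness, the bound \eqref{LSE}, and, via the Lipschitz estimate, continuous dependence on $\phi$.

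The step I expect to be the main obstacle is the careful tracking of the period $\lambda$: one must confirm that the only price of passing from $\R$ to the long torus $\mathbb T_\lambda$ is the harmless factor $\lambda^{C\epsilon_0}$ --- absorbable by the mild shrinking $\delta\sim\lambda^{-\epsilon}$ of the existence time --- rather than a genuine positive power of $\lambda$, which rests on a scale-invariant formulation of the periodic Airy estimates. A secondary delicate point, specific to the present setup, is reconciling the $I$-operator with the \emph{gauged} norm $X^s=\|\mathcal G(\cdot)\|_{Y^s}$: the translation amount defining $\mathcal G$ is solution-dependent and $\mathcal G$ mixes temporal frequencies, so one exploits that $\mathcal G$ is an essentially measure-preserving change of variables whose generator involves only $\int_\T u_\lambda^k$ to pass between the gauged and ungauged pictures with acceptable error.
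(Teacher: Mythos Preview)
The paper does not give its own proof of this lemma; it is quoted directly from \cite{CKSTT-04-gkdvT} (as the bracketed citation in the lemma heading indicates), so there is nothing to compare your proposal against. Your sketch is a faithful outline of the standard argument behind that citation: run a contraction for the gauged variable $v_\lambda=\mathcal G u_\lambda$ in $Y^1([0,\delta])$ using the sharp $s=\tfrac12$ multilinear estimate of \cite{CKSTT-04-gkdvT}, upgrade to the $I$-modified $H^1$ level via the slowly-varying property of $m$, and absorb the $\lambda^{0+}$ loss of the long-period Strichartz estimates by shrinking $\delta$ to $\lambda^{-\epsilon}$.

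One small clarification on the point you flag as delicate. Since $\mathcal G$ is, at each fixed time, a pure spatial translation, it commutes \emph{exactly} with any spatial Fourier multiplier; in particular $I(\mathcal G u_\lambda)(t,x)=(Iu_\lambda)(t,x+a(t))$ with $a(t)=\int_0^t\!\int u_\lambda^k$. The genuine subtlety is rather that the gauge $\mathcal G$ entering the definition $\|Iu_\lambda\|_{X^1}:=\|\mathcal G(Iu_\lambda)\|_{Y^1}$ translates by $b(t)=\int_0^t\!\int (Iu_\lambda)^k$, so $\mathcal G(Iu_\lambda)$ and $Iv_\lambda$ differ by a \emph{time-dependent} spatial shift $a(t)-b(t)$, which a priori does not preserve the $Y^s$ norm. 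In practice this causes no trouble: $a-b$ is $C^1$ in $t$ with $|a'(t)-b'(t)|$ controlled by $\|u_\lambda(t)\|_{H^{1/2}}^k\lesssim 1$ on $[0,\delta]$, so the induced $\tau$-convolution is by an $L^1$ kernel and the $Y^1$ norms of $\mathcal G(Iu_\lambda)$ and $Iv_\lambda$ are comparable. With that adjustment your outline is correct.
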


\texttt{Step 3: Definition of modified energies}. It will be convenient to define
$$
f_\lambda(t):= e^{t\partial_x^3}u_\lambda(t),
$$
then one may find that
\begin{equation*}
\partial_t f_\lambda=e^{t\partial_x^3}\partial_x\big(u_\lambda^{k+1}\big).
\end{equation*}
Therefore, we have
\begin{equation}\label{eqs:1719.47}
\partial_t \widehat{f_\lambda}(\xi)=i\xi\int_{\xi_1+\cdots+\xi_{k+1}=\xi}e^{i(-\xi^3+\xi_1^3+\cdots+\xi_{k+1}^3)t}\widehat{f_\lambda}(t,\xi_1)\cdots\widehat{f_\lambda}(t,\xi_{k+1})
(d\xi_1)_\lambda\cdots (d\xi_{k})_\lambda.
\end{equation}

We denote $m_j=m( \xi_j)$, $\alpha_{k+2}= \xi_1^3+\cdots+ \xi_{k+2}^3$,  $\Gamma_{n}$ to be the hyperplane
\begin{equation}\label{Gamma_n}
\Gamma_n=\left\{( \xi_1,\cdots,  \xi_n)\in \left(\frac\Z\lambda\right)^n: \xi_1+\cdots+ \xi_n=0\right\}.
\end{equation}
From Plancherel's identity (see \eqref{Plancherel} below), it follows that
\begin{align*}
E(Iu_\lambda)=&\frac{1}{2}\int_0^\lambda |\partial_xIu_\lambda(t,x)|^2\,dx+\frac{1}{k+2}\int_0^\lambda |Iu_\lambda(t,x)|^{k+2}\,dx\\
=&\frac12\int_{\Gamma_2} m_1^2 \xi_1^2\> \widehat{f_\lambda}(t,\xi_1)\widehat{f_\lambda}(t,\xi_2)(d\xi_1)_\lambda\\
&+\frac{1}{k+2}\int_{\Gamma_{k+2}}m_1\cdots m_{k+2}\,e^{i\alpha_{k+2}t}
\widehat{f_\lambda}(t,\xi_1)\cdots\widehat{f_\lambda}(t,\xi_{k+2})
(d\xi_1)_\lambda\cdots (d\xi_{k+1})_\lambda.
\end{align*}
By \eqref{eqs:1719.47}, the symmetries of the variables $\xi_j$ in the integration and a direct computation,  we have
\begin{align}
&\dfrac{d}{dt}E(Iu_\lambda(t))\notag\\
=&\int_{\Gamma_{k+2}} e^{i\alpha_{k+2}t}\big(M_{k+2}+i\sigma_{k+2}\>\alpha_{k+2}\big)
\widehat{f_\lambda}(t,\xi_1)\cdots\widehat{f_\lambda}(t,\xi_{k+2})
(d\xi_1)_\lambda\cdots (d\xi_{k+1})_\lambda\label{dE1}\\
&+\int_{\Gamma_{2k+2}} e^{i\alpha_{2k+2}t}M_{2k+2}\>
\widehat{f_\lambda}(t,\xi_1)\cdots\widehat{f_\lambda}(t,\xi_{2k+2})
(d\xi_1)_\lambda\cdots (d\xi_{2k+1})_\lambda,\label{dE2}
\end{align}
where
\begin{equation*}\aligned
M_{k+2}( \xi_1,\cdots, \xi_{k+2}) :=&i\big(m_1^2\xi_1^3+\cdots+m_{k+2}^2\xi_{k+2}^3\big);
\quad \sigma_{k+2}:=\frac1{k+2} m_1\cdots m_{k+2};\\
M_{2k+2}( \xi_1,\cdots, \xi_{2k+2}) :=&\; i(k+2)[\sigma_{k+2}( \xi_1,\cdots, \xi_{k+1},
\xi_{k+2}+\cdots+ \xi_{2k+2}) ( \xi_{k+2}+\cdots+ \xi_{2k+2})]_{sym},
\endaligned\end{equation*}
and $[m]_{sym}$ denotes the symmetrization of a multiplier $m$ (see
\cite{CKSTT-03-KDV}).  So far, this precess is rather standard, and it is the same as what in real line case, see \cite{F, Miao-Shao-Wu-Xu:2009:gKdV}, etc..

Now we focus our attention on the term \eqref{dE1}, and consider the quantity
\begin{equation}\label{M-phi}
\frac{M_{k+2}}{\alpha_{k+2}}.
\end{equation}
If it makes sense, then one may use the identity
\begin{equation}\label{identity}
e^{i\alpha_{k+2}s}=\frac{1}{i\alpha_{k+2}} \partial_s\big(e^{i\alpha_{k+2}s}\big),
\end{equation}
and take the derivative in $s$. This way gives the definition of the second modified energy, and may improve the tiny increment estimate of $E(Iu)$.

One may find \eqref{M-phi} is bounded when $k=1,2$.
But unfortunately, \eqref{M-phi} is singular and thus does not make sense in general when $k\ge 3$.  So it fails to define the second modified energy in this way. Here our argument is the resonance decomposition developed in \cite{Miao-Shao-Wu-Xu:2009:gKdV}.

To do this, we first make a convenient reduction. Denote $\xi_1^*,\cdots,\xi_{k+2}^*,\cdots, \xi_{2k+2}^*$ to be the rearrangement of $\xi_1,\cdots,\xi_{k+2}, \cdots,\xi_{2k+2}$, with
$
|\xi_1^*|\ge \cdots \ge  |\xi_{k+2}^*|\ge \cdots\ge |\xi_{2k+2}^*|.
$
\begin{remark}[A convenient reduction]\label{rem:reduction}
If $|\xi_1^*|\ll N$, then $M_{k+2}, M_{2k+2}=0$ which gives the conservation of $E(Iu_\lambda)$. Hence one may restrict $|\xi_1^*|\gtrsim N$ in the support of $\Gamma_{k+2}$ and $\Gamma_{2k+2}$.
\end{remark}
Now we define the ``non-resonance'' set  using the spirit in \cite{Miao-Shao-Wu-Xu:2009:gKdV}, let
$$
\Omega=\Omega_1\cup \Omega_2\cup \Omega_3\cup \Omega_4,
$$
where
\begin{align*}
\Omega_1=&\big\{(\xi_1,\cdots,\xi_{k+2})\in \Gamma_{k+2}: |\xi_3^*|\gg |\xi_4^*|\big\};\\
\Omega_2=&\big\{(\xi_1,\cdots,\xi_{k+2})\in \Gamma_{k+2}: |\xi_1^*|\sim |\xi_2^*|\gtrsim N\gg |\xi_3^*|\sim |\xi_4^*|,
\big|{\xi_1^*}^3+{\xi_2^*}^3\big|\gg \big|{\xi_3^*}^3+\cdots+{\xi_{k+2}^*}^3\big|\big\};\\
\Omega_3=&\big\{( \xi_1,\cdots,  \xi_{k+2})\in \Gamma_{k+2}:
|\xi_1^{*}|\gg |\xi_3^{*}|,|\xi_1^{*}+\xi_2^{*}||\xi_1^*|\gg |\xi_3^*|^2\big\};\\
\Omega_4=&\big\{(\xi_1,\cdots,\xi_{k+2})\in \Gamma_{k+2}: |\xi_4^*|\gg|\xi_5^*|,\big|\xi_1^*+\xi_2^*\big|\big|\xi_1^*+\xi_3^*\big|\big|\xi_1^*+\xi_4^*\big|\gg |\xi_5^*||\xi_1^*|^2\\
&\qquad \big|m(\xi_1^*)^2{\xi_1^*}^3+\cdots+m(\xi_4^*)^2{\xi_4^*}^3\big|\gg \big|m(\xi_5^*)^2{\xi_5^*}^3+m(\xi_6^*)^2{\xi_6^*}^3\big|\big\},
\end{align*}
and $\xi_6^*=0$ if $k=3$.

Compared with the ``non-resonance'' sets defined in \cite{Miao-Shao-Wu-Xu:2009:gKdV}, we add the set $\Omega_3$ and slightly change the definition on $\Omega_4$. They are employed to overcome the trouble
from the weak Strichartz estimates in the periodic setting.

Firstly, \eqref{M-phi} is bounded in ``non-resonance'' set, that is,
\begin{lem}\label{non-res}
\begin{align}\label{eqs:non-res}
|M_{k+2}|\lesssim |\alpha_{k+2}|,\quad \mbox{ in } \Omega.
\end{align}
\end{lem}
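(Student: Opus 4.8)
The plan is to establish \eqref{eqs:non-res} separately on each of $\Omega_1,\Omega_2,\Omega_3,\Omega_4$, after two preliminary reductions. Since $m\equiv1$ on $\{|\xi|\le N\}$, on the region $|\xi_1^*|\ll N$ one has $M_{k+2}=i\alpha_{k+2}$ and there is nothing to prove, so by Remark \ref{rem:reduction} I may assume $|\xi_1^*|\gtrsim N$; note also that $\xi_1+\cdots+\xi_{k+2}=0$ forces $|\xi_1^*|\sim|\xi_2^*|$. Throughout I will use the elementary properties of $g(\xi):=m(\xi)^2\xi^3$, namely that $g$ is odd and smooth with $|g^{(\ell)}(\xi)|\lesssim m(\xi)^2|\xi|^{3-\ell}\le|\xi|^{3-\ell}$ for $\ell=0,1,2,3$ (a routine computation from the definition of $m$), that $M_{k+2}=i\sum_{j=1}^{k+2}g(\xi_j)$, and the arithmetic identities $a^3+b^3=(a+b)(a^2-ab+b^2)$ (whence $|a^3+b^3|\sim|a+b|\max(|a|,|b|)^2$), $a^3+b^3+c^3-3abc=(a+b+c)(a^2+b^2+c^2-ab-bc-ca)$, and $a^3+b^3+c^3+d^3=3(a+b)(a+c)(a+d)$ when $a+b+c+d=0$.

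On $\Omega_2$ all $m_j$ with $j\ge3$ equal $1$, so $M_{k+2}-i\alpha_{k+2}=i\big[(g(\xi_1^*)+g(\xi_2^*))-((\xi_1^*)^3+(\xi_2^*)^3)\big]$; writing $g(\xi_1^*)+g(\xi_2^*)=g(\xi_1^*)-g(-\xi_2^*)$ and using $|g'|\lesssim|\xi|^2$, the mean value theorem gives $|g(\xi_1^*)+g(\xi_2^*)|\lesssim|\xi_1^*+\xi_2^*||\xi_1^*|^2\sim|(\xi_1^*)^3+(\xi_2^*)^3|$, hence $|M_{k+2}-i\alpha_{k+2}|\lesssim|(\xi_1^*)^3+(\xi_2^*)^3|$; the defining inequality $|(\xi_1^*)^3+(\xi_2^*)^3|\gg|(\xi_3^*)^3+\cdots+(\xi_{k+2}^*)^3|$ then yields $|\alpha_{k+2}|\sim|(\xi_1^*)^3+(\xi_2^*)^3|$, so $|M_{k+2}|\lesssim|\alpha_{k+2}|$. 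On $\Omega_3$, the hypothesis $|\xi_1^*|\gg|\xi_3^*|$ forces $|\xi_1^*|\sim|\xi_2^*|\gg|\xi_3^*|\ge\cdots$, so $|M_{k+2}|\le|g(\xi_1^*)+g(\xi_2^*)|+\sum_{j\ge3}|\xi_j^*|^3\lesssim|\xi_1^*+\xi_2^*||\xi_1^*|^2+|\xi_3^*|^3$, while $|\alpha_{k+2}|\gtrsim|\xi_1^*+\xi_2^*||\xi_1^*|^2-|\xi_3^*|^3$; since $|\xi_1^*+\xi_2^*||\xi_1^*|\gg|\xi_3^*|^2$ implies $|\xi_1^*+\xi_2^*||\xi_1^*|^2\gg|\xi_3^*|^2|\xi_1^*|\ge|\xi_3^*|^3$, these combine to give $|M_{k+2}|\lesssim|\alpha_{k+2}|$.

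On $\Omega_1$ I would distinguish two cases. If $|\xi_1^*|\gg|\xi_3^*|$, then $|\xi_1^*+\xi_2^*|=|\xi_3^*+\cdots+\xi_{k+2}^*|\sim|\xi_3^*|$ (using $|\xi_3^*|\gg|\xi_4^*|\ge\cdots$), so $|\xi_1^*+\xi_2^*||\xi_1^*|\sim|\xi_3^*||\xi_1^*|\gg|\xi_3^*|^2$ and, with the constants in the various $\gg$'s chosen consistently, the point lies in $\Omega_3$, already treated. Otherwise $|\xi_1^*|\sim|\xi_2^*|\sim|\xi_3^*|\gg|\xi_4^*|$; then $|M_{k+2}|\lesssim|\xi_1^*|^3$, and with $w:=\xi_1^*+\xi_2^*+\xi_3^*=-(\xi_4^*+\cdots+\xi_{k+2}^*)$, $|w|\lesssim|\xi_4^*|\ll|\xi_1^*|$, the cubic identity above gives $|\alpha_{k+2}-3\xi_1^*\xi_2^*\xi_3^*|\lesssim|w||\xi_1^*|^2+|\xi_4^*|^3\ll|\xi_1^*|^3\sim|3\xi_1^*\xi_2^*\xi_3^*|$, so $|\alpha_{k+2}|\sim|\xi_1^*|^3\gtrsim|M_{k+2}|$.

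The case $\Omega_4$ is the delicate one and is where I expect the main work. Set $w:=\xi_1^*+\xi_2^*+\xi_3^*+\xi_4^*=-(\xi_5^*+\cdots+\xi_{k+2}^*)$, so $|w|\lesssim|\xi_5^*|\ll|\xi_4^*|$ (here $\xi_6^*=0$ when $k=3$). The third defining inequality of $\Omega_4$ says exactly that $\big|\sum_{j=1}^4 g(\xi_j^*)\big|\gg\big|\sum_{j=5}^{k+2}g(\xi_j^*)\big|$, whence $|M_{k+2}|\lesssim\big|\sum_{j=1}^4 g(\xi_j^*)\big|$. I then decompose
\[
\sum_{j=1}^4 g(\xi_j^*)=\Big[g(\xi_1^*)+g(\xi_2^*)+g(\xi_3^*)+g\big(-\xi_1^*-\xi_2^*-\xi_3^*\big)\Big]+\Big[g(\xi_4^*)-g\big(\xi_4^*-w\big)\Big].
\]
The first bracket, viewed as a function of $(\xi_1^*,\xi_2^*,\xi_3^*)$, vanishes whenever two of $\xi_1^*,\xi_2^*,\xi_3^*,-\xi_1^*-\xi_2^*-\xi_3^*$ are opposite (oddness of $g$), so a threefold mean value argument together with $\sup_\R|g'''|\lesssim1$ bounds it by $\lesssim|\xi_1^*+\xi_2^*||\xi_1^*+\xi_3^*||\xi_2^*+\xi_3^*|\lesssim|\xi_1^*+\xi_2^*||\xi_1^*+\xi_3^*|\big(|\xi_1^*+\xi_4^*|+|w|\big)$, while the second bracket is $\lesssim|w|\sup|g'|\lesssim|w||\xi_1^*|^2$; using $|w|\lesssim|\xi_5^*|$ and the first defining inequality $|\xi_1^*+\xi_2^*||\xi_1^*+\xi_3^*||\xi_1^*+\xi_4^*|\gg|\xi_5^*||\xi_1^*|^2$ this gives $|M_{k+2}|\lesssim|\xi_1^*+\xi_2^*||\xi_1^*+\xi_3^*||\xi_1^*+\xi_4^*|$. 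Applying the same decomposition to $\sum_{j=1}^4 (\xi_j^*)^3$ — where the first bracket now equals $3(\xi_1^*+\xi_2^*)(\xi_1^*+\xi_3^*)(\xi_1^*+\xi_4^*-w)$ exactly and $\big|\sum_{j\ge5}(\xi_j^*)^3\big|\lesssim|\xi_5^*|^3\le|\xi_5^*||\xi_1^*|^2$ — the same inequality forces $|\alpha_{k+2}|\gtrsim|\xi_1^*+\xi_2^*||\xi_1^*+\xi_3^*||\xi_1^*+\xi_4^*|$, which would complete the proof. The main obstacle is precisely this last case: one needs the sharp uniform bound $\sup|g'''|\lesssim1$, an accurate propagation of the small defect $w$ through the threefold finite-difference expansion, and care that the hierarchy of implicit large constants in the definitions of $\Omega_1,\dots,\Omega_4$ is consistent.
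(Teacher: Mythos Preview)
Your proposal is correct, and on $\Omega_2$ and $\Omega_3$ it is essentially the same as the paper's argument. On $\Omega_1$ and $\Omega_4$ you take a genuinely different route.

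For $\Omega_1$ the paper does not split cases: it observes directly that $\xi_1^*\cdot\xi_2^*<0$ (forced by $|\xi_3^*|\gg|\xi_4^*|$ and the zero–sum constraint), so that $|\alpha_{k+2}|\sim|\xi_3^*|\,|\xi_1^*|^2$ via $a^3+b^3=(a+b)(a^2-ab+b^2)$, and bounds $|M_{k+2}|$ by the same quantity using a single mean value step. Your two–case argument (reducing to the $\Omega_3$ computation when $|\xi_1^*|\gg|\xi_3^*|$, and using $a^3+b^3+c^3=3abc+(a+b+c)(\cdots)$ when $|\xi_1^*|\sim|\xi_3^*|$) is also valid; it is slightly longer but avoids the sign observation.

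For $\Omega_4$ the difference is more substantial. The paper splits into $|\xi_1^*|-|\xi_4^*|=o(|\xi_1^*|)$ versus $|\xi_1^*|-|\xi_4^*|\sim|\xi_1^*|$ and, in the first subcase, applies the \emph{double} mean value theorem (their Lemma~3.1) to the quantity $g(\xi_1^*)+g(\xi_2^*)+g(\xi_3^*)+g(\overline{\xi_4})$, getting a bound with $|g''(\xi_1^*)|\lesssim m_1^2|\xi_1^*|$; in the second subcase a pairwise MVT suffices. You instead claim a single uniform bound
\[
\big|g(\xi_1^*)+g(\xi_2^*)+g(\xi_3^*)+g(-\xi_1^*-\xi_2^*-\xi_3^*)\big|\ \lesssim\ \|g'''\|_\infty\,|\xi_1^*+\xi_2^*|\,|\xi_1^*+\xi_3^*|\,|\xi_2^*+\xi_3^*|,
\]
valid for any odd $C^3$ function $g$. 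This is true, and it makes the argument cleaner by eliminating the case split; but it is not a ``black-box'' triple finite difference and deserves a proof. One way: writing $p=\xi_1^*+\xi_2^*$, $q=\xi_1^*+\xi_3^*$, the left side equals $\int_0^p\!\int_0^q g''(\xi_1^*-s-t)\,dt\,ds$; a further change of variables and repeated use of the parities ($g'$ even, $g''$ odd) reduces this to an integral of length $|p|$ of a function that is odd in its argument and has derivative bounded by $\|g'''\|_\infty|q|$, whence the factor $|a+d|=|2\xi_1^*-p-q|$ appears. You should include such a computation (or state and prove it as a separate ``triple mean value'' lemma) rather than leave it as an assertion. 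A minor point: in bounding the bracket $g(\xi_4^*)-g(\xi_4^*-w)$ you write $|w|\sup|g'|$, but $\sup_{\R}|g'|=\infty$; you mean the sup over the interval $[\xi_4^*-w,\xi_4^*]$, which is indeed $\lesssim|\xi_4^*|^2\le|\xi_1^*|^2$.
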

Secondly, we have
\begin{lem}\label{es:Mk+2}
In $\Gamma_{k+2}\backslash\Omega$,
\begin{itemize}
\item[(1).] It holds that
\begin{align}\label{lem:Mk+2-1}
|M_{k+2}|\lesssim m(\xi_1^*)^2|\xi_1^*||\xi_3^*|^2.
\end{align}
\item[(2).] If
$|\xi_1^*|\sim |\xi_2^*|\gtrsim N\gg |\xi_3^*|\sim |\xi_4^*|$, then
\begin{align}\label{lem:Mk+2-2}
|M_{k+2}|\lesssim |\xi_3^*||\xi_4^*||\xi_5^*|.
\end{align}
\item[(3).] If $|\xi_4^*|\gg|\xi_5^*|$, then
\begin{align}\label{lem:Mk+2-3}
|M_{k+2}|\lesssim m(\xi_1^*)^2|\xi_1^*|^2|\xi_5^*|.
\end{align}
\end{itemize}
\end{lem}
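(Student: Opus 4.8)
The plan is to carry everything out through the single odd, smooth function $g(\xi):=m(\xi)^2\xi^3$. Since $m$ is a symbol with $|m^{(\ell)}(\xi)|\lesssim m(\xi)|\xi|^{-\ell}$, we have $|g^{(\ell)}(\xi)|\lesssim m(\xi)^2|\xi|^{3-\ell}$; and since $s\ge\frac12$, the quantity $m(\xi)^2|\xi|^{r}$ is, up to an absolute constant, nondecreasing in $|\xi|$ for every $r\ge 1$ (it is $|\xi|^{r}$ for $|\xi|\le N$ and $N^{2-2s}|\xi|^{r-2+2s}$ for $|\xi|>2N$, with $r-2+2s\ge 0$). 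Up to an irrelevant constant $M_{k+2}=i\sum_{j=1}^{k+2}g(\xi_j^*)$ on $\Gamma_{k+2}$, and by Remark \ref{rem:reduction} we may assume $|\xi_1^*|\gtrsim N$. Each estimate then comes from isolating the largest two or four frequencies, using $\sum_j\xi_j^*=0$ to force near-cancellations among them, and using membership in $\Gamma_{k+2}\backslash\Omega$ to quantify those cancellations; the key elementary identity is $g(\xi_1^*)+g(\xi_2^*)=g(\xi_1^*)-g(\xi_1^*-\mu)$ with $\mu:=\xi_1^*+\xi_2^*$ (by oddness), whence $|g(\xi_1^*)+g(\xi_2^*)|\lesssim|\mu|\,m(\xi_1^*)^2|\xi_1^*|^2$ whenever $|\mu|\ll|\xi_1^*|$.

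For part (1), if $|\xi_1^*|\sim|\xi_3^*|$ the bound is trivial: $|M_{k+2}|\lesssim m(\xi_1^*)^2|\xi_1^*|^3\sim m(\xi_1^*)^2|\xi_1^*||\xi_3^*|^2$. If $|\xi_1^*|\gg|\xi_3^*|$, then $\sum_j\xi_j^*=0$ forces $|\mu|=|\xi_1^*+\xi_2^*|=|\xi_3^*+\cdots+\xi_{k+2}^*|\le k|\xi_3^*|\ll|\xi_1^*|$, so $|g(\xi_1^*)+g(\xi_2^*)|\lesssim|\xi_1^*+\xi_2^*|\,m(\xi_1^*)^2|\xi_1^*|^2$; since $|\xi_1^*|\gg|\xi_3^*|$, being outside $\Omega_3$ forces $|\xi_1^*+\xi_2^*||\xi_1^*|\lesssim|\xi_3^*|^2$, which converts this into $m(\xi_1^*)^2|\xi_1^*||\xi_3^*|^2$; the tail is estimated crudely, $\sum_{j\ge 3}|g(\xi_j^*)|\lesssim m(\xi_3^*)^2|\xi_3^*|^3\lesssim m(\xi_1^*)^2|\xi_1^*||\xi_3^*|^2$ by the monotonicity above.

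For part (2), in the stated regime $m(\xi_j^*)=1$ for all $j\ge 3$, so $M_{k+2}/i=[g(\xi_1^*)+g(\xi_2^*)]+S$ with $S:=\sum_{j=3}^{k+2}(\xi_j^*)^3$. As in part (1), $|g(\xi_1^*)+g(\xi_2^*)|\lesssim m(\xi_1^*)^2|\xi_1^*+\xi_2^*||\xi_1^*|^2\sim m(\xi_1^*)^2\,|(\xi_1^*)^3+(\xi_2^*)^3|$, and being outside $\Omega_2$ bounds $|(\xi_1^*)^3+(\xi_2^*)^3|$ by $|S|$; hence $|M_{k+2}|\lesssim|S|$. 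On the other hand $\sum_{j\ge 3}\xi_j^*=-(\xi_1^*+\xi_2^*)$, so Newton's identity $p_3=e_1^3-3e_1e_2+3e_3$ gives $|S|\lesssim|\xi_1^*+\xi_2^*|\,|\xi_3^*|^2+|\xi_3^*||\xi_4^*||\xi_5^*|$; combining with $|\xi_1^*+\xi_2^*|\lesssim|S|/|\xi_1^*|^2$ (from the first step) and $|\xi_3^*|\ll|\xi_1^*|$, the term $|\xi_1^*+\xi_2^*||\xi_3^*|^2$ is absorbed into the left side, leaving $|S|\lesssim|\xi_3^*||\xi_4^*||\xi_5^*|$ and hence the claim.

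For part (3), $\xi_1^*=-(\xi_2^*+\cdots+\xi_{k+2}^*)$ forces $|\xi_1^*|\sim|\xi_2^*|$; writing $M_{k+2}/i=G+R$ with $G=\sum_{j\le 4}g(\xi_j^*)$ and $R=\sum_{j\ge 5}g(\xi_j^*)$, the monotonicity gives $|R|\lesssim m(\xi_1^*)^2|\xi_1^*|^2|\xi_5^*|$, so it remains to bound $G$. Since we are outside $\Omega_4$, one of its two nontrivial conditions fails: either $|G|\lesssim|m(\xi_5^*)^2(\xi_5^*)^3+m(\xi_6^*)^2(\xi_6^*)^3|\lesssim m(\xi_1^*)^2|\xi_1^*|^2|\xi_5^*|$, which closes the estimate, or the geometric bound $|(\xi_1^*+\xi_2^*)(\xi_1^*+\xi_3^*)(\xi_1^*+\xi_4^*)|\lesssim|\xi_5^*||\xi_1^*|^2$ holds. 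In the latter case I would invoke the cubic identity $\sum_{j=1}^4\eta_j^3=3(\eta_1+\eta_2)(\eta_1+\eta_3)(\eta_1+\eta_4)$, valid when $\sum_j\eta_j=0$, after shifting one coordinate by $\rho:=-(\xi_1^*+\cdots+\xi_4^*)=\xi_5^*+\cdots+\xi_{k+2}^*$ (so $|\rho|\lesssim|\xi_5^*|$); this, with the resulting lower-order $\rho$-errors all of size $\lesssim|\xi_1^*|^2|\xi_5^*|$, yields $|\sum_{j\le 4}(\xi_j^*)^3|\lesssim|\xi_1^*|^2|\xi_5^*|$. \emph{The main obstacle is the remaining step}: passing from $\sum_{j\le 4}(\xi_j^*)^3$ to $G=\sum_{j\le 4}m(\xi_j^*)^2(\xi_j^*)^3$, since the ``correction'' terms $(m(\xi_j^*)^2-m(\xi_1^*)^2)(\xi_j^*)^3$ can a priori be as large as $m(\xi_1^*)^2|\xi_1^*|^3$, far above the target $m(\xi_1^*)^2|\xi_1^*|^2|\xi_5^*|$. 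Controlling them needs a further case split — on the relative sizes of $|\xi_1^*|,\dots,|\xi_4^*|$ and on which differences $|\xi_1^*+\xi_j^*|$ are small (which frequencies are near-antipodal to $\xi_1^*$) — in which the geometric condition of $\Omega_4$ is played against the cancellation forced by $\sum_j\xi_j^*=0$, so that each surviving correction either is itself $\lesssim m(\xi_1^*)^2|\xi_1^*|^2|\xi_5^*|$ or cancels against a partner term; this is precisely where the refinement over \cite{Miao-Shao-Wu-Xu:2009:gKdV} (the extra set $\Omega_3$ and the modified $\Omega_4$) is used, and it is by far the most technical portion, parts (1)--(2) being short.
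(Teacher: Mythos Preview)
Your parts (1) and (2) are correct and essentially match the paper; in (2) your Newton--identity bootstrap (absorbing the term $|\xi_1^*+\xi_2^*|\,|\xi_3^*|^2$ via $|\xi_1^*+\xi_2^*|\lesssim|S|/|\xi_1^*|^2$ and $|\xi_3^*|\ll|\xi_1^*|$) is in fact more careful than the paper, which simply writes $\big|(\xi_3^*)^3+\cdots+(\xi_{k+2}^*)^3\big|\lesssim|\xi_3^*||\xi_4^*||\xi_5^*|$ without further comment. One small misattribution: the refinement via $\Omega_3$ is already used in part (1) (to obtain $|\xi_1^*+\xi_2^*|\,|\xi_1^*|\lesssim|\xi_3^*|^2$), not in part (3).

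For part (3), your setup and case split (the two ways $\Omega_4$ can fail) are correct, but the ``main obstacle'' you isolate is an artifact of going through $\sum_{j\le4}(\xi_j^*)^3$ first and then trying to correct to $G=\sum_{j\le4}g(\xi_j^*)$. The paper avoids this entirely by applying a \emph{double} mean value theorem directly to $g$ (you already recorded $|g''(\xi)|\lesssim m(\xi)^2|\xi|$):
\[
\big|g(\xi)-g(\xi+\eta)-g(\xi+\lambda)+g(\xi+\eta+\lambda)\big|\ \lesssim\ m(\xi)^2|\xi|\,|\eta|\,|\lambda|\qquad(|\eta|,|\lambda|\ll|\xi|).
\]
In the geometric sub-case of $\Omega_4^c$ the paper first reduces to $|\xi_1^*|\sim|\xi_2^*|\sim|\xi_3^*|\sim|\xi_4^*|$: if both $|\xi_1^*+\xi_3^*|,|\xi_1^*+\xi_4^*|\gtrsim|\xi_1^*|$, the geometric bound forces $|\xi_1^*+\xi_2^*|\lesssim|\xi_5^*|$, and then $\sum_j\xi_j^*=0$ forces $|\xi_3^*+\xi_4^*|\lesssim|\xi_5^*|$ as well, so two applications of your \emph{single} mean-value estimate $|g(a)+g(-a+\mu)|\lesssim m(a)^2|a|^2|\mu|$ already close this case; otherwise one of $|\xi_1^*+\xi_3^*|,|\xi_1^*+\xi_4^*|$ is $\ll|\xi_1^*|$, which together with $\Omega_1^c$ makes all four comparable. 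Once all four are comparable, a short sign split (normalize $\xi_1^*>0$; three cases for the signs of $\xi_2^*,\xi_3^*,\xi_4^*$) lets you write $G$ --- after replacing $\xi_4^*$ by $\overline{\xi_4^*}:=-(\xi_1^*+\xi_2^*+\xi_3^*)$ at cost $O\big(m(\xi_1^*)^2|\xi_1^*|^2|\xi_5^*|\big)$ --- exactly as a second difference of $g$ with increments $\eta,\lambda$ chosen among the two small quantities $\xi_1^*+\xi_j^*$. The double mean value bound $m(\xi_1^*)^2|\xi_1^*|\,|\eta|\,|\lambda|$ combined with $|\eta|\,|\lambda|\,|\xi_1^*|\lesssim\big|(\xi_1^*+\xi_2^*)(\xi_1^*+\xi_3^*)(\xi_1^*+\xi_4^*)\big|\lesssim|\xi_1^*|^2|\xi_5^*|$ finishes the argument. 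With this device part (3) is only slightly longer than (1)--(2), not ``by far the most technical portion''.
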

Lemma \ref{es:Mk+2}  implies that the bound of $M_{k+2}$ in ``resonance'' set is less than its nature bound (which is $m(\xi_1^*)^2|\xi_1^*|^2|\xi_3^*|$). Here we give a slightly finer  bound than the one obtained in \cite{Miao-Shao-Wu-Xu:2009:gKdV}, according to the problem under study.


Based on these two lemmas, we rewrite the term \eqref{dE1} as
\begin{align*}
 \eqref{dE1}=&\int_{\Gamma_{k+2}} e^{i\alpha_{k+2}t}\Big(\chi_{\Omega}M_{k+2}+i\sigma_{k+2}\>\alpha_{k+2}\Big)
\widehat{f_\lambda}(t,\xi_1)\cdots\widehat{f_\lambda}(t,\xi_{k+2})
(d\xi_1)_\lambda\cdots (d\xi_{k+1})_\lambda\\
&\quad+\int_{\Gamma_{k+2}} e^{i\alpha_{k+2}t}\big(1-\chi_{\Omega}\big)M_{k+2}
\widehat{f_\lambda}(t,\xi_1)\cdots\widehat{f_\lambda}(t,\xi_{k+2})(d\xi_1)_\lambda\cdots (d\xi_{k+1})_\lambda.
\end{align*}
From Lemma \ref{non-res}, $\frac{1}{i\alpha_{k+2}} \big(\chi_{\Omega}M_{k+2}+i\sigma_{k+2}\>\alpha_{k+2}\big)$ is bounded. Thus by \eqref{identity}, integration by parts in time, \eqref{eqs:1719.47}, and combining with \eqref{dE2}, we have
\begin{align}
\dfrac{d}{dt}E(Iu_\lambda(t))=&\int_{\Gamma_{k+2}} \partial_t\big(e^{i\alpha_{k+2}t}\big)\frac{1}{i\alpha_{k+2}} \Big(\chi_{\Omega}M_{k+2}+i\sigma_{k+2}\>\alpha_{k+2}\Big)
\widehat{f_\lambda}(t,\xi_1)\cdots\widehat{f_\lambda}(t,\xi_{k+2})
\notag\\
&+\int_{\Gamma_{k+2}} e^{i\alpha_{k+2}t}\big(1-\chi_{\Omega}\big)M_{k+2}
\widehat{f_\lambda}(t,\xi_1)\cdots\widehat{f_\lambda}(t,\xi_{k+2})
\notag\\
&+\int_{\Gamma_{2k+2}} e^{i\alpha_{2k+2}t}M_{2k+2}\>
\widehat{f_\lambda}(t,\xi_1)\cdots\widehat{f_\lambda}(t,\xi_{2k+2})
\notag\\
=&\dfrac{d}{dt}\int_{\Gamma_{k+2}} e^{i\alpha_{k+2}t}\frac{\chi_{\Omega}M_{k+2}+i\sigma_{k+2}\>\alpha_{k+2}}{i\alpha_{k+2}}
\widehat{f_\lambda}(t,\xi_1)\cdots\widehat{f_\lambda}(t,\xi_{k+2})
\notag\\
&+\int_{\Gamma_{k+2}} e^{i\alpha_{k+2}t}\big(1-\chi_{\Omega}\big)M_{k+2}
\widehat{f_\lambda}(t,\xi_1)\cdots\widehat{f_\lambda}(t,\xi_{k+2})
\notag\\
&+\int_{\Gamma_{2k+2}} e^{i\alpha_{2k+2}t}\overline{M_{2k+2}}\>
\widehat{f_\lambda}(t,\xi_1)\cdots\widehat{f_\lambda}(t,\xi_{2k+2}),\label{3.10}
\end{align}
where
\begin{equation}\aligned
\overline{M_{2k+2}}:=
&i(k+2)\big[\tilde{\sigma}_{k+2}( \xi_1,\cdots, \xi_{k+1}, \xi_{{k+2}}+\cdots+
\xi_{2k+2}) ( \xi_{{k+2}}+\cdots+ \xi_{2k+2}\big)\big]_{sym},\label{M2k+2}
\endaligned
\end{equation}
and $\tilde{\sigma}_{k+2}=-{\chi_{\Omega}M_{k+2}}/{\alpha_{k+2}}$.  In particular, we have the bound of $\overline{M_{2k+2}}$.
\begin{lem}\label{es:M2k+2}
In $\Gamma_{2k+2}$,
\begin{itemize}
\item[(1).] It holds that
\begin{align}\label{M2k+2-1}
|\overline{M_{2k+2}}|\lesssim |\xi_1^*|.
\end{align}
\item[(2).] If $|\xi_1^*|\sim |\xi_2^*|\gtrsim N\gg |\xi_3^*|\sim |\xi_4^*|$, then
\begin{align}\label{M2k+2-2}
|\overline{M_{2k+2}}|\lesssim |\xi_3^*|.
\end{align}
\end{itemize}
\end{lem}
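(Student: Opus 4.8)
\noindent\emph{Sketch of the intended argument.}
Since $[\,\cdot\,]_{sym}$ is an average over $S_{2k+2}$ and $\tilde\sigma_{k+2}$ is symmetric in its $k+2$ arguments, $\overline{M_{2k+2}}$ equals, up to a constant depending only on $k$, the sum over subsets $B\subset\{1,\dots,2k+2\}$ with $|B|=k+1$ of
\begin{equation*}
\mathcal T_B:=\tilde\sigma_{k+2}\!\big(\{\xi_j\}_{j\notin B},\ \eta_B\big)\,\eta_B,\qquad \eta_B:=\sum_{j\in B}\xi_j .
\end{equation*}
Because $\sum_{j=1}^{2k+2}\xi_j=0$, the argument $(\{\xi_j\}_{j\notin B},\eta_B)$ lies on $\Gamma_{k+2}$, and there $\tilde\sigma_{k+2}=-\chi_{\Omega}M_{k+2}/\alpha_{k+2}$, so $|\tilde\sigma_{k+2}|\lesssim1$ by Lemma~\ref{non-res} (it vanishes off $\Omega$). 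Part~(1) is then immediate: $|\eta_B|\le(k+1)|\xi_1^*|$, so $|\mathcal T_B|\lesssim|\xi_1^*|$ and $|\overline{M_{2k+2}}|\lesssim|\xi_1^*|$.

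For part~(2) fix $n_1,n_2$ with $|\xi_{n_1}|=|\xi_1^*|$, $|\xi_{n_2}|=|\xi_2^*|$ and record that $\xi_{n_1}+\xi_{n_2}=-\sum_{j\neq n_1,n_2}\xi_j$ has modulus $\lesssim|\xi_3^*|$. If $\{n_1,n_2\}\subset B$ or $\{n_1,n_2\}\subset B^c$, then in $\eta_B$ the two large frequencies either both appear paired as $\xi_{n_1}+\xi_{n_2}$ or both are absent, so $|\eta_B|\lesssim|\xi_3^*|$ and $|\mathcal T_B|\lesssim|\xi_3^*|$ already from $|\tilde\sigma_{k+2}|\lesssim1$. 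The remaining, decisive case is the \emph{split} one, where exactly one of $n_1,n_2$ lies in $B$: then $|\eta_B|\sim|\xi_1^*|$ and the crude bound fails, so a cancellation must be exhibited.

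The plan for the split case is to pair $B$ (say $n_1\in B$, $n_2\notin B$) with $\widetilde B:=(B\setminus\{n_1\})\cup\{n_2\}$, which has the \emph{same} small indices. One checks $\eta_B+\eta_{\widetilde B}=\xi_{n_1}+\xi_{n_2}+2\!\sum_{j\in B\setminus\{n_1\}}\xi_j$, of modulus $\lesssim|\xi_3^*|$, and $\xi_{n_2}+\eta_B=\xi_{n_1}+\eta_{\widetilde B}=-\!\sum_{j\in B^c\setminus\{n_2\}}\xi_j=:r$ with $|r|\lesssim|\xi_3^*|$; hence $\eta_B=-\xi_{n_2}+r$, $\eta_{\widetilde B}=-\xi_{n_1}+r$, and since $\xi_{n_1}\approx-\xi_{n_2}$ the two $(k+2)$-tuples coincide in their small slots and are small perturbations (size $\lesssim|\xi_3^*|$) of one another in the two large slots. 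Therefore
\begin{equation*}
\big|\mathcal T_B+\mathcal T_{\widetilde B}\big|\le|\tilde\sigma_{k+2}^{B}|\,\big|\eta_B+\eta_{\widetilde B}\big|+\big|\tilde\sigma_{k+2}^{B}-\tilde\sigma_{k+2}^{\widetilde B}\big|\,|\eta_{\widetilde B}|\lesssim|\xi_3^*|+\big|\tilde\sigma_{k+2}^{B}-\tilde\sigma_{k+2}^{\widetilde B}\big|\,|\xi_1^*| ,
\end{equation*}
so everything reduces to proving $\big|\tilde\sigma_{k+2}^{B}-\tilde\sigma_{k+2}^{\widetilde B}\big|\lesssim|\xi_3^*|/|\xi_1^*|$, which I expect to be the main obstacle.

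To carry this out one expands $M_{k+2}$ and $\alpha_{k+2}$ at these tuples using the near-cancellation: writing $h(\xi)=m(\xi)^2\xi^3$ (odd, with $|h^{(\ell)}(\xi)|\lesssim m(\xi)^2|\xi|^{3-\ell}$), one finds $M_{k+2}^{B}/\alpha_{k+2}^{B}$ is, to leading order in $|\xi_3^*|/|\xi_1^*|$, of the form $i\,b(\xi_{n_2})\,m(\xi_{n_2})^2$ with $b$ a fixed bounded even function, and likewise for $\widetilde B$ with $\xi_{n_1}$ in place of $\xi_{n_2}$; since $b\,m^2$ is even and $\big||\xi_{n_1}|-|\xi_{n_2}|\big|=|\xi_{n_1}+\xi_{n_2}|\lesssim|\xi_3^*|$, the difference carries an $m(\xi_1^*)^2$ and is $\lesssim m(\xi_1^*)^2|\xi_3^*|/|\xi_1^*|\le|\xi_3^*|/|\xi_1^*|$. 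The delicate part is controlling the non-leading terms: here one splits into the pieces $\Omega_1,\dots,\Omega_4$ of the resonant decomposition (outside $\Omega$, $\tilde\sigma_{k+2}=0$ and nothing is needed), uses the defining inequalities of $\Omega_2$ and $\Omega_3$ to bound $|\alpha_{k+2}|$ from below by a constant times $|r|\,|\xi_1^*|^2$, and invokes the refined multiplier bounds of Lemma~\ref{es:Mk+2} to keep the small-cube contributions beneath that lower bound — it is precisely for this that the extra set $\Omega_3$ of the present paper is used. Summing over the finitely many pairs, together with the harmless non-split summands, then yields $|\overline{M_{2k+2}}|\lesssim|\xi_3^*|$, completing the proof.
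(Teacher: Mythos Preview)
Your approach matches the paper's: part~(1) is immediate from $|\tilde\sigma_{k+2}|\lesssim1$, and part~(2) proceeds by expanding the symmetrization, disposing of the ``non-split'' subsets trivially via $|\eta_B|\lesssim|\xi_3^*|$, and pairing the split ones to exhibit a mean-value cancellation. Your subset expansion is in fact more careful than the paper's brief sketch, which writes out only $k+2$ representative terms and defers the rest to \cite{Miao-Shao-Wu-Xu:2009:gKdV}.

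Two remarks on your treatment of the split case. First, the reduction is cleaner than your leading-order expansion suggests: since $\tilde\sigma_{k+2}$ is symmetric in its arguments, the function $F(\zeta):=\tilde\sigma_{k+2}(\zeta,-\zeta+r,s_1,\dots,s_k)$ satisfies $F(\zeta)=F(-\zeta+r)$ identically, and hence $F(\xi_{n_1})=F(-\xi_{n_1}+r)$ with $\big|(-\xi_{n_1}+r)-\xi_{n_2}\big|\le|\xi_{n_1}+\xi_{n_2}|+|r|\lesssim|\xi_3^*|$. Thus the required difference bound reduces to a genuine one-variable derivative estimate $|F'|\lesssim|\xi_1^*|^{-1}$ over an interval of length $\lesssim|\xi_3^*|$; note that the $s_j$-contributions to $M_{k+2}$ and $\alpha_{k+2}$ are identical for $B$ and $\widetilde B$ and therefore drop out of the difference without any separate control. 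Second, your invocation of Lemma~\ref{es:Mk+2} and the claim that $\Omega_3$ is needed ``precisely for this'' are misplaced: Lemma~\ref{es:Mk+2} concerns $\Gamma_{k+2}\setminus\Omega$, where $\tilde\sigma_{k+2}$ already vanishes, and the extra set $\Omega_3$ is introduced in this paper to sharpen that lemma for use in the $(k+2)$-linear increment estimate (Lemma~\ref{p+2-linear}), not for the present multiplier bound.
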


According to \eqref{3.10}, we define
\begin{align}\label{3.13}
E_I^2(u_\lambda(t)):= E(Iu_\lambda(t))-\int_{\Gamma_{k+2}} e^{i\alpha_{k+2}t}\frac{\chi_{\Omega}M_{k+2}+\sigma_{k+2}\>\alpha_{k+2}}{i\alpha_{k+2}}
\widehat{f_\lambda}(t,\xi_1)\cdots\widehat{f_\lambda}(t,\xi_{k+2}),
\end{align}
and get
\begin{align*}
\dfrac{d}{dt}E_I^2(u_\lambda(t))=&\int_{\Gamma_{k+2}} e^{i\alpha_{k+2}t}\big(1-\chi_{\Omega}\big)M_{k+2}
\widehat{f_\lambda}(t,\xi_1)\cdots\widehat{f_\lambda}(t,\xi_{k+2})
\\
&+\int_{\Gamma_{2k+2}} e^{i\alpha_{2k+2}}\overline{M_{2k+2}}\>
\widehat{f_\lambda}(t,\xi_1)\cdots\widehat{f_\lambda}(t,\xi_{2k+2}).
\end{align*}
This gives that 
\begin{align}
E_I^2(u_\lambda(t))=&E_I^2(u_\lambda(0))+\int_0^t\int_{\Gamma_{k+2}} e^{i\alpha_{k+2}s}\big(1-\chi_{\Omega}\big)M_{k+2}
\widehat{f_\lambda}(s,\xi_1)\cdots\widehat{f_\lambda}(s,\xi_{k+2})
\notag\\
&+\int_0^t\int_{\Gamma_{2k+2}} e^{i\alpha_{2k+2}s}\overline{M_{2k+2}}\>
\widehat{f_\lambda}(s,\xi_1)\cdots\widehat{f_\lambda}(s,\xi_{2k+2}).\label{eqs:EI2}
\end{align}

\texttt{Step 4: Energy increment estimates}.

By the preparation in Step 1--Step 3, we can derive the following proposition, which is sufficient to prove Theorem \ref{thm:main}.
\begin{prop}[Existence of an almost conserved quantity]\label{thm:main-2}
For a solution $u_\lambda$ to \eqref{gkdvR} which is smooth-in-time, Schwarz-in-space on the time interval $[0,\delta]$ with $\delta$ satisfying \eqref{delta}, we have
\begin{itemize}
\item (Fixed-time bound)
\begin{equation}\label{fixed-time bound}
\big|E_I^2(u_\lambda(t))-E(Iu_\lambda(t))\big|
\lesssim
N^{-2+}\|Iu_\lambda(t)\|^{k+2}_{H^1_x}.
\end{equation}
\item (Almost conservation law) Let $\|Iu_\lambda\|_{X^1([0,\delta])}\lesssim 1$, then
 \begin{equation}\label{Almost conserved}
 \left|E^2_I(u_\lambda(\delta))-E^2_I(u_\lambda(0))\right|\le K:= N^{-3+}+N^{-2+}\lambda^{-\frac12}.
 \end{equation}
\end{itemize}
\end{prop}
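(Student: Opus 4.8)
The plan is to establish the two assertions of Proposition \ref{thm:main-2} by combining the multiplier bounds of Lemmas \ref{non-res}--\ref{es:M2k+2} with multilinear estimates in the Bourgain space $X^1$, using the bilinear Strichartz estimate advertised in the introduction to absorb the loss that appears in the periodic setting.

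For the fixed-time bound \eqref{fixed-time bound}, I would start from the defining formula \eqref{3.13}, so that $E_I^2(u_\lambda(t))-E(Iu_\lambda(t))$ equals the multilinear expression $\int_{\Gamma_{k+2}} e^{i\alpha_{k+2}t}\,\tilde\sigma_{k+2}\,\widehat{f_\lambda}(t,\xi_1)\cdots\widehat{f_\lambda}(t,\xi_{k+2})$ with $\tilde\sigma_{k+2}=-\chi_\Omega M_{k+2}/\alpha_{k+2}$ (the $\sigma_{k+2}\alpha_{k+2}/(i\alpha_{k+2})$ piece being harmless since $\sigma_{k+2}$ is bounded). By Remark \ref{rem:reduction} one may assume $|\xi_1^*|\gtrsim N$. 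The key point is that on the non-resonance set $\Omega$, Lemma \ref{non-res} together with the refined bounds of Lemma \ref{es:Mk+2} show that $|\tilde\sigma_{k+2}|$ gains essentially two derivatives relative to the largest frequency; quantitatively, dividing $M_{k+2}$ by $\alpha_{k+2}$ and using the decompositions of $\Omega$ into $\Omega_1,\dots,\Omega_4$ one gets pointwise control by $m(\xi_1^*)^2 |\xi_1^*|^2 / \langle \xi_1^*\rangle^{2-}$ type quantities, i.e.\ a factor $N^{-2+}$ out front after using $|\xi_1^*|\gtrsim N$ and the properties \eqref{m} of $m$. Then one distributes the $m$-factors and the frequency weights onto the $\widehat{f_\lambda}$'s, passes from $f_\lambda$ back to $u_\lambda$, and closes with Hölder / Sobolev embedding ($H^1(\T)\hookrightarrow L^\infty$) to bound the expression by $N^{-2+}\|Iu_\lambda(t)\|_{H^1}^{k+2}$. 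This part is essentially a fixed-time (no time integration) multilinear estimate and is the easier of the two.

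For the almost conservation law \eqref{Almost conserved}, I would integrate \eqref{eqs:EI2} over $[0,\delta]$ and estimate the two space-time multilinear forms: the $(k+2)$-linear ``resonant remainder'' $\int_0^\delta\int_{\Gamma_{k+2}} e^{i\alpha_{k+2}s}(1-\chi_\Omega)M_{k+2}\,\widehat{f_\lambda}\cdots\widehat{f_\lambda}$, and the $(2k+2)$-linear correction term $\int_0^\delta\int_{\Gamma_{2k+2}} e^{i\alpha_{2k+2}s}\overline{M_{2k+2}}\,\widehat{f_\lambda}\cdots\widehat{f_\lambda}$. In both cases I would put everything in terms of the gauged Bourgain norm $\|Iu_\lambda\|_{X^1([0,\delta])}\lesssim 1$, using the interpolation/multiplier bound \eqref{II} to pass between $Iu_\lambda$ and $u_\lambda$, and using the good properties of the gauge transformation $\mathcal G$ (as flagged in the Outline) to handle the fact that $X^s$ is defined via $\mathcal G$ while the equation is the ungauged one. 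On the resonant set $\Gamma_{k+2}\setminus\Omega$, Lemma \ref{es:Mk+2}(1) gives $|M_{k+2}|\lesssim m(\xi_1^*)^2|\xi_1^*||\xi_3^*|^2$, which after putting $|\xi_1^*|\gtrsim N$ costs only a single power of $|\xi_1^*|$; to beat $N^{-3+}$ and $N^{-2+}\lambda^{-1/2}$ one then needs to exploit the smallness of the resonant set together with a Strichartz (and bilinear Strichartz) input on the two highest frequencies — this is exactly where the periodic bilinear estimate with the function $C(\xi_1,\xi_2,\lambda)$ is used, and where the $\lambda^{-1/2}$ in $K$ originates. The term with $\overline{M_{2k+2}}$ is controlled similarly via Lemma \ref{es:M2k+2}, distributing $2k+2$ factors of $\widehat{f_\lambda}$ in $X^1$ and using $L^6_{xt}$ / $L^4_{xt}$ Strichartz estimates for the low-frequency factors; the bound $|\overline{M_{2k+2}}|\lesssim |\xi_1^*|$ (improved to $|\xi_3^*|$ in the symmetric case of \eqref{M2k+2-2}) again gives one derivative that must be recovered from dispersion.

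The main obstacle I expect is the resonant remainder term: this is precisely where the real-line argument of \cite{Miao-Shao-Wu-Xu:2009:gKdV} does not transfer, because the periodic Strichartz estimates are weaker, and it is the reason the authors enlarged $\Omega$ by adding $\Omega_3$ and modifying $\Omega_4$ and sharpened Lemma \ref{es:Mk+2}. The delicate book-keeping is to sum over the dyadic frequency configurations of $\xi_1^*,\dots,\xi_{k+2}^*$ on $\Gamma_{k+2}\setminus\Omega$, decide in each configuration which two frequencies to feed into the (variant) bilinear Strichartz estimate versus which to estimate by $L^6_{xt}$, and check that the resulting powers of $N$ and $\lambda$ always close below $K=N^{-3+}+N^{-2+}\lambda^{-1/2}$; this requires the refined multiplier bound and careful use of the extra smallness encoded in the complement of each $\Omega_i$. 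Once the two multilinear estimates are in hand, Proposition \ref{thm:main-2} follows immediately by integrating \eqref{eqs:EI2} and invoking \eqref{LSE}.
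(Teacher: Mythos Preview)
Your plan for the almost conservation law \eqref{Almost conserved} is essentially the paper's approach: pass to the gauged function $v_\lambda$ (so that the multilinear form is unchanged on $\Gamma_{k+2}$), extend to $\R$ in time via Lemma \ref{lem:non-smooth}, localize dyadically, and then in each frequency configuration feed two factors into the bilinear Strichartz estimate (Corollary \ref{cor:2.2}) while placing the rest in $L^6_{xt}$ or $L^\infty_{xt}$. The paper carries this out by splitting $\Gamma_{k+2}\setminus\Omega$ into regions $A_1,\dots,A_4$ according to how many of the $|\xi_j^*|$ exceed $N$, and the $(2k+2)$-linear term into $B_1,B_2,B_3$; the $\lambda^{-1/2}$ appears exactly where you say, from $C(N^2,\lambda)$.

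Your argument for the fixed-time bound \eqref{fixed-time bound}, however, has a genuine gap. You invoke Lemma \ref{es:Mk+2} on $\Omega$, but that lemma is stated and proved only on $\Gamma_{k+2}\setminus\Omega$; on $\Omega$ the only information available is $|\tilde\sigma_{k+2}|\lesssim 1$ from Lemma \ref{non-res}, and this is sharp (e.g.\ on $\Omega_1$ both $|M_{k+2}|$ and $|\alpha_{k+2}|$ are comparable to $|\xi_1^*|^2|\xi_3^*|$). Your claimed pointwise bound $m(\xi_1^*)^2|\xi_1^*|^2/\langle\xi_1^*\rangle^{2-}\sim m(\xi_1^*)^2|\xi_1^*|^{0+}$ does not yield $N^{-2+}$ either: at $|\xi_1^*|\sim N$ it is of size $N^{0+}$. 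In the paper the factor $N^{-2+}$ comes not from the symbol but from the constraint $|\xi_1^*|\sim|\xi_2^*|\gtrsim N$: one writes $\widehat{u_\lambda}(\xi_j)=(m(\xi_j)|\xi_j|)^{-1}\widehat{|\nabla|Iu_\lambda}(\xi_j)$ for $j=1,2$, uses $m(\xi)|\xi|\gtrsim N$ for $|\xi|\gtrsim N$ to extract $N^{-2}$, and then closes with H\"older and $\||\nabla|^{-\epsilon}u_\lambda\|_{L^\infty_x}\lesssim\|Iu_\lambda\|_{H^1_x}$ on the remaining $k$ factors. This is Lemma \ref{lem:fixed-time} and uses only Lemma \ref{non-res}.
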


Now the paper is organized as follows. In Section 2, we introduce some
notations and state some preliminary estimates that will be used
throughout this paper. In particular, we establish the bilinear Strichartz estimates in this section. In Section 3, we prove Lemma \ref{non-res},  Lemma \ref{es:Mk+2} and Lemma \ref{es:M2k+2}. In Section 4,  we give the proof of Proposition \ref{thm:main-2}.  In Section 5, we show that Proposition \ref{thm:main-2} implies the global well-posedness
stated in Theorem 1.1.

\vspace{0.5cm}
\section{Notations and Preliminary Estimates}

\subsection{Basic notations and definitions}
We use $A\lesssim B$, $B\gtrsim A$, or sometimes $A=O(B)$ to denote the statement that $A\leq CB$ for some large constant $C$ which may
vary from line to line, and may depend on the data and the index $s$. When it is
necessary, we will write the constants by $C_1,C_2,\cdots$ to see
the dependency relationship. We use $A\sim B$ to mean
$A\lesssim B\lesssim A$.
We use $A\ll B$, or sometimes $A=o(B)$
to denote the statement $A\leq C^{-1}B$. The notation $a+$ denotes $a+\epsilon$ for
any small $\epsilon$, and $a-$ for $a-\epsilon$.
$\langle\cdot\rangle=(1+|\cdot|^2)^{1/2}$,
$D_x^\alpha=(-\partial^2_x)^{\alpha/2}$ and
$J_x^\alpha=(1-\partial^2_x)^{\alpha/2}$. We use
$\|f\|_{L^p_xL^q_t}$ to denote the mixed norm
$\Big(\displaystyle\int\|f(\cdot,x)\|_{L^q}^p\
dx\Big)^{\frac{1}{p}}$, and $\|f\|_{L^p_{xt}}:=\|f\|_{L^p_xL^p_t}$.

Throughout this paper, we use $\eta$ to denote a {\it smooth} cut-off function such that
\begin{align*}
\begin{cases}
\eta(x)=1, \ & |x|\le 1,\\
\eta(x)=0,    \ &|x|\ge 2.
\end{cases}
\end{align*}
For an interval $I\subset \R$, we denote $\chi_I$ as its characteristic function
\begin{align*}
\begin{cases}
\chi_I(x)=1, \ & x\in I,\\
\chi_I(x)=0,    \ &x\not\in I.
\end{cases}
\end{align*}

Now we introduce some other notations and definitions, some of which are employed from \cite{CKSTT-03-KDV}. We define
$(d\xi)_\lambda$ to be the normalized counting measure on
$\Z/\lambda$ such that
$$
\displaystyle\int
a(\xi)\,(d\xi)_\lambda=\frac{1}{\lambda}\sum\limits_{\xi\in
\frac\Z{\lambda}} a(\xi).
$$

The Fourier transform of a function $f$ on $\T_{\lambda}=\R/\lambda\Z$ is defined by
$$
\hat{f}(\xi)=\displaystyle\int_0^\lambda e^{-2\pi i  x\xi}f(x)\,dx,
$$
and thus the Fourier inversion formula
$$
f(x)=\displaystyle\int e^{2\pi i  x\xi} \hat{f}(\xi)\,(d\xi)_\lambda.
$$
Then the following usual properties of the Fourier transform hold,
\begin{eqnarray}
 &\|f\|_{L^2([0,\lambda])}
 = \big\|\hat{f}\big\|_{L^2((d \xi)_{\lambda})} \quad \mbox{(Plancherel)};\label{Plancherel}\\
 &\displaystyle\int_0^\lambda f(x)\overline{g(x)}\,dx
 = \displaystyle\int \hat{f}(\xi)\overline{\hat{g}(\xi)}\,(d\xi)_\lambda\quad \mbox{(Parseval)}\label{Parseval}; \\
 & \widehat{fg}(\xi)=\displaystyle\int
  \hat{f}(\xi-\xi_1)\hat{g}(\xi_1) \,(d\xi_1)_\lambda \quad \mbox{(Convolution)}. \label{(Convolution)}
\end{eqnarray}
We define the Sobolev space $H^s([0,\lambda])$ with the norm,
$$
\|f\|_{H^s([0,\lambda])}=\left\|\langle
\xi\rangle^s\hat{f}(\xi)\right\|_{L^2((d\xi)_\lambda)}.
$$

For $s,b\in \R $, define the Bourgain space $X_{s,b}$ to be the
closure of the Schwartz class under the norm
\begin{equation}
\|f\|_{X_{s,b}}:= \left(\int\!\!\!\!\int \langle
\xi\rangle^{2s}\langle\tau- \xi^3\rangle^{2b}|\hat{f}(
\tau,\xi)|^2\,(d\xi)_\lambda d\tau\right)^{\frac{1}{2}},\label{X}
\end{equation}
for any $\lambda$-periodic function $f$. The space $X_{s,\frac{1}{2}}$ barely fails to control the
$L^\infty([0,T], H^s(\R))$ norm. To rectify this we define the
slightly stronger space $Y^s$ under the norm
\begin{equation}\label{working space}
\|f\|_{Y^s} := \|f\|_{X_{s,\frac{1}{2}}}+\left\|\langle
\xi\rangle^s\hat{f}\right\|_{L^2((d\xi)_{\lambda}) L^1(d\tau)}.
\end{equation}
Moreover, we define the restricted space $Y^s(I)$ as
\begin{equation*}
\|f\|_{Y^s(I)}:= \inf\{\|\tilde f\|_{Y^s}:\tilde f=f, \mbox{ on } I\},
\end{equation*}
and as \eqref{workingspace},
$$
\|f\|_{X^s(I)}:=\|\mathcal Gf\|_{Y^s(I)}.
$$
If there is no confusion, we will not mention the restriction.
\subsection{Some linear estimates}

\begin{lem}\label{lem:non-smooth}
Let  $0<\delta <1$, and $f\in X_{0,\frac12}$. Then $\chi_{[0,\delta]}(t)f\in X_{0,b}$ for any $b<\frac12$, and
$$
\big\|\chi_{[0,\delta]}(t) f\big\|_{X_{0,b}}\lesssim \big\| f\big\|_{X_{0,\frac12}}.
$$
\end{lem}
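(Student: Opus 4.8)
The plan is to reduce the two–dimensional $X_{0,b}$ estimate to a one–dimensional weighted inequality in the time variable, and then to recognize the latter as a weighted $L^2$ bound for the Hilbert transform. The key structural point is that, since $s=0$, the $X_{0,b}$ norm carries no spatial weight and therefore decouples over the spatial frequencies $\xi\in\frac\Z\lambda$, whereas multiplication by $\chi_{[0,\delta]}(t)$ acts purely in the time variable.

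First I would carry out the reduction. On the space–time Fourier side, multiplication by $\chi_{[0,\delta]}(t)$ becomes convolution in the $\tau$ variable alone, with a kernel $\widehat{\chi_{[0,\delta]}}$ that is the same for every $\xi$, and this convolution commutes with the translation $\tau\mapsto\tau+\xi^3$. Hence, after that translation in the inner integral,
\[
\|\chi_{[0,\delta]}f\|_{X_{0,b}}^2=\int\Big(\int_\R\langle\tau\rangle^{2b}\,\big|(\widehat{\chi_{[0,\delta]}}\ast G_\xi)(\tau)\big|^2\,d\tau\Big)(d\xi)_\lambda,\qquad G_\xi(\tau):=\widehat f(\tau+\xi^3,\xi),
\]
while $\|f\|_{X_{0,b}}^2=\int\|G_\xi\|_{H^b(\R)}^2\,(d\xi)_\lambda$. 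Thus it suffices to prove the scalar statement, with a constant uniform in $\delta\in(0,1)$: for $v\in H^b(\R)$,
\[
\big\|\chi_{[0,\delta]}\,v\big\|_{H^b(\R)}\lesssim_b\|v\|_{H^b(\R)};
\]
integrating in $(d\xi)_\lambda$ and using $\|v\|_{H^b}\le\|v\|_{H^{1/2}}$ for $b\le\frac12$ then gives the lemma (in fact with the stronger right–hand side $\|f\|_{X_{0,b}}$).

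For the scalar estimate I would split the sharp cutoff as $\chi_{[0,\delta]}(t)=\chi_{[0,\infty)}(t)-\chi_{[0,\infty)}(t-\delta)$; since translation in $t$ is an isometry of $H^b(\R)$, it is enough to bound $\|\chi_{[0,\infty)}v\|_{H^b(\R)}$. Now multiplication by the Heaviside function is, on the Fourier side, the operator $\widehat v\mapsto c_1\widehat v+c_2\,\mathcal H\widehat v$, where $\mathcal H$ is the Hilbert transform, because $\widehat{\chi_{[0,\infty)}}$ equals, up to constants, a Dirac mass at the origin plus $\mathrm{p.v.}\,1/\tau$. Therefore
\[
\|\chi_{[0,\infty)}v\|_{H^b(\R)}\lesssim\|v\|_{H^b(\R)}+\big\|\langle\tau\rangle^{b}\,\mathcal H\widehat v\big\|_{L^2_\tau},
\]
and everything reduces to boundedness of $\mathcal H$ on $L^2(\langle\tau\rangle^{2b}\,d\tau)$. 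By the Hunt–Muckenhoupt–Wheeden theorem this holds precisely when $\langle\tau\rangle^{2b}$ is a Muckenhoupt $A_2$ weight, i.e.\ when $-1<2b<1$, which is exactly the hypothesis $b<\frac12$.

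The only substantive ingredient is this last weighted Hilbert–transform bound, so that is where any difficulty lies; note that $b<\frac12$ is exactly the $A_2$ endpoint and the $A_2$ constant of $\langle\tau\rangle^{2b}$ blows up as $b\to\frac12$, consistently with the estimate genuinely failing at $b=\frac12$. If one prefers a self–contained argument that avoids $A_2$ theory, the alternative is to use the pointwise bound $|\widehat{\chi_{[0,\delta]}}(\tau)|\lesssim\min(\delta,|\tau|^{-1})$ together with a Littlewood–Paley decomposition of $v$ in the modulation variable and summation of the pieces; there the delicate point is that a crude Schur–type estimate diverges logarithmically at the endpoint, so one must genuinely exploit the summable gain coming from $b<\frac12$.
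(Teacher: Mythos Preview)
Your proof is correct and in fact yields the slightly stronger conclusion $\|\chi_{[0,\delta]}f\|_{X_{0,b}}\lesssim\|f\|_{X_{0,b}}$, as you note. Both your argument and the paper's begin with the same reduction, freezing $\xi$ and reducing to the one-dimensional time estimate $\|\chi_{[0,\delta]}v\|_{H^b_t}\lesssim\|v\|_{H^{1/2}_t}$ (or, in your case, $\|v\|_{H^b_t}$). The paper, however, treats this scalar estimate by the fractional Leibniz rule in $t$,
\[
\big\|\chi_{[0,\delta]}\,w\big\|_{H^b_t}\lesssim\big\|J_t^b\chi_{[0,\delta]}\big\|_{L^p_t}\|w\|_{L^q_t}+\big\|\chi_{[0,\delta]}\big\|_{L^\infty_t}\|w\|_{H^b_t},
\]
with $p=2+$, $b<1/p$, $\frac1p+\frac1q=\frac12$, and then uses Sobolev embedding $\|w\|_{L^q_t}\lesssim\|w\|_{H^{1/2}_t}$ together with $\|J_t^b\chi_{[0,\delta]}\|_{L^p_t}\lesssim 1$ for $b<1/p$. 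This route is more elementary (only product and embedding inequalities) but necessarily lands on $H^{1/2}$ rather than $H^b$ on the right. Your Heaviside decomposition plus the weighted Hilbert-transform bound via the $A_2$ condition calls on heavier machinery but gives the sharper conclusion and makes it transparent that $b<\frac12$ is exactly the threshold. For the application in the paper either version suffices.
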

\begin{proof}
Note that
$$
\big\|J^s_t\chi_{[0,\delta]}(t)\big\|_{L^p_t}+\big\|\chi_{[0,\delta]}(t)\big\|_{L^\infty_t}\lesssim 1
$$
for any $s<\frac1p$.  Then by the fractional product role, H\"older's and Sobolev's inequality, we have
\begin{align*}
\big\|\chi_{[0,\delta]}(t)e^{t\partial_{x}^3}f\big\|_{H^b_t}
&\lesssim
\big\|J_t^b\chi_{[0,\delta]}(t)\cdot e^{t\partial_{x}^3}f\big\|_{L^2_t}+
\big\|\chi_{[0,\delta]}(t)\cdot J_t^be^{t\partial_{x}^3}f\big\|_{L^2_t}\\
&\lesssim
\big\|J_t^b\chi_{[0,\delta]}(t)\big\|_{L^p_t} \big\|e^{t\partial_{x}^3}f\big\|_{L^q_t}
+\big\|\chi_{[0,\delta]}(t)\big\|_{L^\infty_t} \big\|J_t^be^{t\partial_{x}^3}f\big\|_{L^2_t}\\
&\lesssim
 \big\| f\big\|_{H^{\frac12}_t},
\end{align*}
where
$
p=2+,  b<\frac1p, \frac1p+\frac1q=\frac12.
$
By using this estimate, we have
\begin{align*}
\big\|\chi_{[0,\delta]}(t) f\big\|_{X_{0,b}}&=\big\|\chi_{[0,\delta]}(t)e^{t\partial_{x}^3}f(t,x)\big\|_{L^2_xH^b_t}\\
&\lesssim
 \big\| e^{t\partial_{x}^3} f\big\|_{L^2_xH^{\frac12}_t}
=
 \big\| f\big\|_{X_{0,\frac12}}.
\end{align*}
This proves  the lemma.
\end{proof}
Now we state some preliminary estimates which will be used in the
following sections. First we recall some well-known Strichartz
estimates (see \cite{Bourgain, CKSTT-04-gkdvT}, for examples):
\begin{equation}
              \|f\|_{L^4_{xt}}
   \lesssim
              \|f\|_{X_{0,\frac{1}{3}}},\label{XE1}
\end{equation}
and
\begin{equation}
              \|f\|_{L^6_{xt}}
   \lesssim
              \lambda^{0+}\|f\|_{X_{0+,\frac{1}{2}+}}.\label{XE2}
\end{equation}
It follows from the interpolation between (\ref{XE1}) and (\ref{XE2}) that
\begin{equation}
              \|f\|_{L^q_{xt}}
   \lesssim
              \lambda^{0+}\|f\|_{X_{0+,\frac{1}{2}-\sigma(q)}},\label{XE3}
\end{equation}
for all $4<q<6$ and $\sigma(q)<2(\frac{1}{q}-\frac{1}{6})$.

Since the $L^q_x$-norm is invariant under the gauge transformation, we have almost the same Strichartz estimates between $X^s$ and $Y^s$. In particular, we have the following two estimates.
\begin{lem}\label{lem:XE6-12}
\begin{itemize}
\item[(1).] Let $s>\frac12$ and $f\in X^s$, then
\begin{equation}
\|f\|_{L^\infty_{xt}} \lesssim
\|f\|_{X^s}.\label{XE4}
\end{equation}
\item[(2).]
Let $s>0$ and $f\in X^s$, then
\begin{equation}
              \|f\|_{L^6_{xt}}
   \lesssim
              \lambda^{0+}\|f\|_{X^{s}}.\label{XE5}
\end{equation}
\end{itemize}
\end{lem}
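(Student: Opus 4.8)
The plan is to strip off the gauge transformation — which preserves every spatial $L^q$-norm — and thereby reduce both estimates to space--time bounds for the ordinary space $Y^s$, and then to handle \eqref{XE4} by a direct Fourier-inversion argument and \eqref{XE5} by an interpolation trick that compensates for the fact that the $X^s$-norm only controls the endpoint modulation exponent $b=\tfrac12$. \textbf{Step 1 (removing the gauge).} For a given $f=f(t,x)$ the quantity $h(t):=\int_0^t\int_\T f^k\,dx\,ds$ depends on $t$ only, so $\mathcal G f(t,x)=f(t,x+h(t))$ is, at each fixed $t$, a translate of $f(t,\cdot)$. Since translations are measure-preserving on $\T_\lambda=\R/\lambda\Z$, one has $\|f(t,\cdot)\|_{L^q_x([0,\lambda])}=\|\mathcal G f(t,\cdot)\|_{L^q_x([0,\lambda])}$ for every $q\in[1,\infty]$ and every $t$; integrating in $t$ (or taking the supremum when $q=\infty$) gives $\|f\|_{L^q_{xt}}=\|\mathcal G f\|_{L^q_{xt}}$. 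Since $\|f\|_{X^s}=\|\mathcal G f\|_{Y^s}$ and $\mathcal G f\in Y^s$ whenever $f\in X^s$, it suffices to show, for an arbitrary $\lambda$-periodic $g$, that $\|g\|_{L^\infty_{xt}}\lesssim\|g\|_{Y^s}$ for $s>\tfrac12$ and $\|g\|_{L^6_{xt}}\lesssim\lambda^{0+}\|g\|_{Y^s}$ for $s>0$.

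\textbf{Step 2 (proof of \eqref{XE4}).} By Fourier inversion, $|g(t,x)|\le\int\bigl(\int|\hat g(\tau,\xi)|\,d\tau\bigr)(d\xi)_\lambda$ uniformly in $(t,x)$. Cauchy--Schwarz in $(d\xi)_\lambda$ against the weight $\langle\xi\rangle^{-s}$ bounds this by $\|\langle\xi\rangle^{-s}\|_{L^2((d\xi)_\lambda)}\,\|\langle\xi\rangle^s\hat g\|_{L^2((d\xi)_\lambda)L^1(d\tau)}\le\|\langle\xi\rangle^{-s}\|_{L^2((d\xi)_\lambda)}\,\|g\|_{Y^s}$, and one checks that $\|\langle\xi\rangle^{-s}\|_{L^2((d\xi)_\lambda)}^2=\tfrac1\lambda\sum_{n\in\Z}\langle n/\lambda\rangle^{-2s}$ is bounded uniformly in $\lambda\ge1$ precisely because $s>\tfrac12$ (it is dominated by $1+\int_\R\langle\xi\rangle^{-2s}\,d\xi$, using monotonicity to compare the sum with the integral).

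\textbf{Step 3 (proof of \eqref{XE5}).} Here the obstruction is that $Y^s$ controls only $X_{s,1/2}$, while the periodic $L^6$ Strichartz estimate \eqref{XE2} needs the modulation exponent $\tfrac12+$; moreover the $L^2_\xi L^1_\tau$ component of the $Y^s$-norm, being adapted to the $L^\infty$ rather than the $L^6$ bound, does not repair this (the transfer argument loses a factor in the wrong direction). Instead I would interpolate between a sub-$L^6$ estimate and an $L^\infty$ estimate. Fix $s>0$. A crude Cauchy--Schwarz in $(\tau,\xi)$ on the Fourier side gives $\|g\|_{L^\infty_{xt}}\lesssim\|g\|_{X_{a,b}}$ for any $a,b>\tfrac12$ (with the $\xi$-sum again uniform in $\lambda\ge1$); take $a=b=\tfrac12+\delta_1$ with $\delta_1<\tfrac13$ small. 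On the other hand, \eqref{XE3} gives, for any $q_0\in(4,6)$ and any small $\epsilon_1>0$, $\|g\|_{L^{q_0}_{xt}}\lesssim\lambda^{0+}\|g\|_{X_{\epsilon_1,\,1/2-\sigma_0}}$ with $0<\sigma_0<2(\tfrac1{q_0}-\tfrac16)=\tfrac13\cdot\tfrac{\theta}{1-\theta}$, where $\theta:=1-q_0/6$. Interpolating the identity operator between these two bounds at this $\theta$ (so $[L^{q_0},L^\infty]_\theta=L^6$, using that $\{X_{s,b}\}$ interpolates as a weighted $L^2$) yields $\|g\|_{L^6_{xt}}\lesssim\lambda^{0+}\|g\|_{X_{s',b'}}$ with $s'=(1-\theta)\epsilon_1+\tfrac12\theta+\delta_1\theta$ and $b'=\tfrac12-(1-\theta)\sigma_0+\delta_1\theta$. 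Since $\delta_1<\tfrac13$ one may pick $\sigma_0$ with $\tfrac{\theta}{1-\theta}\delta_1<\sigma_0<\tfrac13\cdot\tfrac{\theta}{1-\theta}$, which forces $b'<\tfrac12$; and choosing $q_0$ close to $6$ (so $\theta$ small) and $\epsilon_1$ small makes $s'\le s$. Then $\|g\|_{X_{s',b'}}\le\|g\|_{X_{s,1/2}}\le\|g\|_{Y^s}$, and undoing Step 1 gives \eqref{XE5}.

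I expect Step 3 to be the only real difficulty: the direct route through \eqref{XE2} fails at the endpoint $b=\tfrac12$, and one is forced to spend a little of the $\xi$-regularity — which is exactly why \eqref{XE5} is stated with $s>0$ and not $s\ge0$ — in order to push the Lebesgue exponent all the way up to $6$. The rest — the gauge invariance of the $L^q_x$-norms in Step 1 and the Fourier-inversion bound in Step 2 — is routine, provided one keeps the dependence on the period $\lambda\ge1$ explicit (the $\ell^2_\xi$-summability in Steps 2 and 3 is uniform in $\lambda$, and \eqref{XE1}--\eqref{XE3} already carry their $\lambda$-powers).
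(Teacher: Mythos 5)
Your proof is correct. Part (1) and the gauge-removal step coincide with the paper's argument: the paper also notes that $\mathcal G$ acts as a time-dependent spatial translation so all $L^q_x$-norms pass through, and proves \eqref{XE4} exactly by Fourier inversion plus Cauchy--Schwarz against $\langle\xi\rangle^{-s}$, with the $\frac1\lambda\sum_{n}\langle n/\lambda\rangle^{-2s}$ sum uniform in $\lambda\ge1$ for $s>\frac12$. For part (2), however, your route is genuinely different. The paper performs a Littlewood--Paley decomposition $g=\sum_j g_j$, bounds each piece by H\"older between Lebesgue norms, $\|g_j\|_{L^6}\le\|g_j\|_{L^q}^{\theta}\|g_j\|_{L^\infty}^{1-\theta}$ with $q=6-$, then uses \eqref{XE3} at $b=\frac12$ for the $L^q$ factor and \eqref{XE4} (hence the $L^2_\xi L^1_\tau$ component of $Y^\rho$, $\rho>\frac12$) for the $L^\infty$ factor, and finally sums the dyadic pieces by Cauchy--Schwarz, spending the $s>0$ regularity in the summation. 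You instead interpolate the estimates themselves in the $X_{s,b}$ scale: the identity map is bounded from $X_{\epsilon_1,\frac12-\sigma_0}$ to $L^{q_0}$ (this is \eqref{XE3}) and from $X_{\frac12+\delta_1,\frac12+\delta_1}$ to $L^\infty$ (elementary, uniform in $\lambda\ge1$), and Stein--Weiss interpolation of the weighted-$L^2$ spaces together with Riesz--Thorin on the target lands you in $L^6$ with exponents $s'\le s$, $b'<\frac12$; your numerology ($\delta_1<\frac13$, $\frac{\theta}{1-\theta}\delta_1<\sigma_0<\frac13\cdot\frac{\theta}{1-\theta}$, $\theta$ and $\epsilon_1$ small) is right. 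This buys a marginally stronger conclusion --- \eqref{XE5} with only the $X_{s,\frac12}$ half of the $Y^s$-norm, no dyadic bookkeeping --- at the cost of invoking abstract interpolation of weighted $L^2$ spaces rather than the paper's elementary piece-by-piece H\"older-plus-summation argument; both are complete proofs of the stated lemma.
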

\begin{proof}
Let $g=\mathcal G f$.
For \eqref{XE4},
by Young's and Cauchy-Schwartz's inequalities, we have
$$
\|f\|_{L^\infty_{xt}}=\|g\|_{L^\infty_{xt}}\leq \left\|\hat{g}\right\|_{L^1((d\xi)_\lambda
d\tau)}\lesssim \left\|\langle
\xi\rangle^{\frac{1}{2}+}\hat g\right\|_{L^2((d\xi)_{\lambda})
L^1(d\tau)}.
$$

For \eqref{XE5}, by dyadic decomposition, we write
$f=\sum_{j=0}^\infty f_j$, for each dyadic constituents $f_j$ with
frequency support $\langle \xi \rangle\sim 2^j$. Then, by (\ref{XE3})
and (\ref{XE4}),
 \begin{equation*}
 \aligned
\|f\|_{L^6_{xt}}
& \leq \sum\limits_{j=0}^\infty
\|f_j\|_{L^6_{xt}}
\lesssim \sum\limits_{j=0}^\infty
\|f_j\|_{L^q_{xt}}^\theta\|f_j\|_{L^\infty_{xt}}^{1-\theta}\\
&\lesssim \lambda^{0+} \sum\limits_{j=0}^\infty
\|f_j\|_{X_{\epsilon,\frac{1}{2}}}^\theta\|f_j\|_{Y^\rho}^{1-\theta}
\lesssim \lambda^{0+}\sum\limits_{j=0}^\infty
2^{[\theta\epsilon+\rho(1-\theta)]j} \|f_j\|_{X_{0,\frac{1}{2}}}^\theta\|f_j\|_{Y^\rho}^{1-\theta}\\
&\lesssim \lambda^{0+}\sum\limits_{j=0}^\infty
2^{[\theta\epsilon+\rho(1-\theta)]j} \|f_j\|_{Y^0},
 \endaligned
 \end{equation*}
where $\rho>\frac{1}{2}$, and we choose $q=6-$ such that
$\epsilon=0+,\theta=1-$. Choosing $q$ close enough to 6 such that
$s>\theta\epsilon+\rho(1-\theta)$, then we have the claim by
Cauchy-Schwartz's inequality.
\end{proof}

By interpolating between (\ref{XE4}) and (\ref{XE5}), we have
\begin{equation}
              \|f\|_{L^q_{xt}}
   \lesssim
              \lambda^{0+}\|f\|_{Y^{\beta(q)}},\label{XE6}
\end{equation}
for all $6<q<\infty$ and $\beta(q)>(\frac{1}{2}-\frac{3}{q})$.

\subsection{Bilinear Strichartz estimate}
Now we present the bilinear Strichartz estimate of the periodic
version. Let $S_\lambda(t)$ be the solution map to the free
KdV equation
$$
\partial_t u +  \partial^3_{x} u=0, \quad \mbox{ in } [0,\lambda^3T]\times [0,\lambda],
$$
and the bilinear operator $I_M(f,g)$ satisfy
\begin{equation}\label{Bi operator}
\widehat{I_M(f,g)}(\xi)=\displaystyle\int_{\xi=\xi_1+\xi_2}
\chi_{\left\{|\xi_1^2-\xi_2^2|\gtrsim M\right\}}
\hat{f}(\xi_1)\hat{g}(\xi_2)\,(d \xi_1)_\lambda.
\end{equation}
First, we recall the
following result obtained in (7.29) in \cite{CKSTT-03-KDV},
\begin{lem}
Let $\phi_1,\phi_2$ be $\lambda-$periodic functions with both the
frequencies supported on $\{\xi:|\xi|\sim N\}$, then
\begin{equation}\label{Bi Str1}
    \left\|\eta^2(t)S_\lambda\phi_1 S_\lambda\phi_2\right\|_{L^2_{xt}}\leq
    \tilde{C}(N,\lambda)\|\phi_1\|_{L^2_x}\|\phi_2\|_{L^2_x},
\end{equation}
where
\begin{equation}\label{CNlambda1}
\tilde{C}(N,\lambda)=\biggl\{
\begin{array}{ll}
1,&N\leq 1,\\
(\frac{1}{\sqrt{N}}+\frac{1}{\lambda})^{\frac{1}{2}},&N>1.
\end{array}
\end{equation}
\end{lem}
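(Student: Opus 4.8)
\textbf{Proof proposal for Lemma (the bilinear estimate \eqref{Bi Str1}).}

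The plan is to follow the standard $TT^*$-type reduction to a counting problem for the Airy equation on the circle of length $\lambda$, exactly in the spirit of Colliander--Keel--Staffilani--Takaoka--Tao. First I would pass to the Fourier side: writing $S_\lambda(t)\phi_j = \int e^{2\pi i x\xi_j + (2\pi i)^3 t\xi_j^3}\widehat{\phi_j}(\xi_j)\,(d\xi_j)_\lambda$ with frequencies $|\xi_j|\sim N$, the product $S_\lambda\phi_1\,S_\lambda\phi_2$ has spacetime Fourier support governed by $\xi=\xi_1+\xi_2$ and, after inserting the cutoff $\eta^2(t)$, a modulation variable concentrated (up to the tails of $\widehat{\eta^2}$) near $\tau = \xi_1^3+\xi_2^3$. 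The key algebraic identity is the factorization of the resonance function: on $\xi_1+\xi_2=\xi$ one has $\xi_1^3+\xi_2^3 = \xi^3 - 3\xi_1\xi_2\xi = \xi\,(\xi^2 - 3\xi_1\xi_2)$, so $\partial_{\xi_1}(\xi_1^3+\xi_2^3)\big|_{\xi\text{ fixed}} = 3(\xi_1^2-\xi_2^2)$. Thus the phase is, for fixed output frequency $\xi$, a function of $\xi_1$ whose derivative has size $\gtrsim N$ (since $|\xi_1|,|\xi_2|\sim N$ and generically $|\xi_1^2-\xi_2^2|\gtrsim N$); the degenerate locus $|\xi_1|\approx|\xi_2|$ must be handled separately and is where the $1/\lambda$ term comes from.

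The main step is then a $\ell^2$-based counting estimate. After squaring the $L^2_{xt}$ norm and applying Plancherel in both $x$ and $t$, I would bound the left side by $\big\|\,\sum_{\xi_1+\xi_2=\xi}\widehat{\eta^2}(\tau-\xi_1^3-\xi_2^3)\widehat{\phi_1}(\xi_1)\widehat{\phi_2}(\xi_2)\,\big\|_{L^2_{\tau}\ell^2_{\xi}}$, discretize $\tau$ into unit intervals using the rapid decay of $\widehat{\eta^2}$, and then for each fixed $(\xi,\tau)$ apply Cauchy--Schwarz in $\xi_1$ over the set $A_{\xi,\tau}=\{\xi_1\in\tfrac1\lambda\Z:|\xi_1|\sim N,\ |\xi_1^3+\xi_2^3-\tau|\lesssim 1\}$ with $\xi_2=\xi-\xi_1$. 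The crux is the bound $\#A_{\xi,\tau}\lesssim 1 + \lambda/\sqrt N$ uniformly in $\xi,\tau$: the condition $|\xi\,(\xi^2-3\xi_1\xi_2)-\tau|\lesssim 1$ confines $\xi_1\xi_2$ to an interval of length $\lesssim 1/|\xi|\lesssim 1/N$ (when $|\xi|\gtrsim N$), hence, since $\xi_1\xi_2=\xi_1(\xi-\xi_1)$ is a quadratic in $\xi_1$ with discriminant-type spacing $\sim|\xi_1-\xi_2|$, each value of $\xi_1\xi_2$ has $O(1)$ preimages away from $\xi_1=\xi_2$ and the number of lattice points $\xi_1\in\tfrac1\lambda\Z$ in the relevant preimage set is $\lesssim 1 + \lambda\cdot N^{-1/2}$. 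The case $|\xi|\ll N$ (so $\xi_1\approx-\xi_2$) is the genuinely degenerate one: there $|\xi_1^3+\xi_2^3| = |\xi|\,|\xi^2-3\xi_1\xi_2|\lesssim |\xi|\cdot N^2$ need not be large, and the $\tau$-localization gives no gain in $\xi_1$ beyond the trivial count $\lesssim \lambda N$; but this region contributes only through output frequencies $|\xi|\lesssim \tau^{1/3}N^{-2/3}$, a set of measure $O(1)$ worth of $\xi$ after the $(d\xi)_\lambda$ normalization, which is where one recovers $\lambda^{-1}$ rather than $N^{-1/2}$. Combining, $\#A_{\xi,\tau}\lesssim 1/\sqrt N + 1/\lambda$ in the relevant ranges, and for $N\le 1$ the trivial estimate $\|fg\|_{L^2}\le\|f\|_{L^\infty}\|g\|_{L^2}$ together with Bernstein gives the constant $1$.

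I expect the main obstacle to be the careful bookkeeping of the degenerate near-diagonal region $|\xi_1|\approx|\xi_2|$ (equivalently $|\xi|$ small compared to $N$): there the naive stationary-phase/counting heuristic fails, and one must instead exploit that the \emph{output} frequency $\xi$ ranges over a short interval, trading a loss in the $\xi_1$-count against a gain in the $(d\xi)_\lambda$-measure of admissible $\xi$. Getting the two contributions to balance into the clean form $(\tfrac1{\sqrt N}+\tfrac1\lambda)^{1/2}$ — rather than a weaker $(\tfrac1{\sqrt N}+\tfrac1\lambda)^{1/4}$ from a lazier split — requires choosing the threshold between the two regimes optimally and is the delicate point; everything else (the $TT^*$ setup, absorbing $\eta^2(t)$ via $\widehat{\eta^2}\in\mathcal S$, the Cauchy--Schwarz in $\xi_1$) is routine. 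Since this precise statement is quoted from (7.29) of \cite{CKSTT-03-KDV}, in the paper itself one may simply cite it; the sketch above indicates how it is obtained.
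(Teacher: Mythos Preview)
The paper itself does not prove this lemma; it is quoted directly from (7.29) of \cite{CKSTT-03-KDV}, and you correctly note this at the end of your sketch. So there is no in-paper proof to compare against, and your final remark that one may simply cite the result is exactly what the authors do.

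That said, your sketch of how the estimate is obtained has a genuine gap in the degenerate region. The claimed uniform bound $\#A_{\xi,\tau}\lesssim 1+\lambda/\sqrt N$ is \emph{false}: at $\xi=0$ one has $\xi_2=-\xi_1$, so $\xi_1^3+\xi_2^3\equiv 0$ and the modulation constraint $|\tau-\xi_1^3-\xi_2^3|\lesssim 1$ imposes nothing on $\xi_1$; hence $\#A_{0,\tau}\sim \lambda N$ whenever $|\tau|\lesssim 1$. After your Cauchy--Schwarz step one is stuck with the \emph{supremum} over $(\xi,\tau)$, so this single bad point already gives $B\gtrsim N$, not $B\lesssim N^{-1/2}+\lambda^{-1}$. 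Your attempt to rescue this by saying the degenerate region ``contributes only through output frequencies $|\xi|\lesssim\tau^{1/3}N^{-2/3}$'' does not fit the structure of the argument: once you have taken $\sup_{\xi,\tau}$, you can no longer trade against the measure of admissible $\xi$.

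The fix is to make the split \emph{before} the Cauchy--Schwarz step, with threshold $|\xi|\sim N^{-1/2}$. For $|\xi|\gtrsim N^{-1/2}$ your counting does work: writing $(\xi_1-\tfrac{\xi}{2})^2=a+O(1/|\xi|)$ with $a=(\tau-\tfrac14\xi^3)/(3\xi)$ (exactly as in the paper's proof of Proposition~\ref{TBi Str}), one checks that the admissible $\xi_1$ lie in an interval of length $\lesssim N^{-1/2}$, giving $\#A_{\xi,\tau}\lesssim 1+\lambda N^{-1/2}$. For $|\xi|\le N^{-1/2}$ one abandons Cauchy--Schwarz entirely: the spatial Fourier coefficient $(S_\lambda\phi_1\cdot S_\lambda\phi_2)^{\wedge}(t,\xi)$ obeys the trivial bound $\big|\int e^{it(\xi_1^3+\xi_2^3)}\widehat{\phi_1}(\xi_1)\widehat{\phi_2}(\xi_2)\,(d\xi_1)_\lambda\big|\le\|\phi_1\|_{L^2}\|\phi_2\|_{L^2}$ uniformly in $t$, and there are only $\lesssim 1+\lambda N^{-1/2}$ lattice points $\xi\in\tfrac1\lambda\Z$ with $|\xi|\le N^{-1/2}$. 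Summing over those $\xi$ with the normalized counting measure yields a contribution $\lesssim(\lambda^{-1}+N^{-1/2})\|\phi_1\|_{L^2}^2\|\phi_2\|_{L^2}^2$, which is exactly $\tilde C(N,\lambda)^2$. This is the mechanism that produces the $1/\lambda$ term; it is not a consequence of the counting bound at all.
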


The results above match Kato's smoothing effect in the real line
case. As a refinement, we give the following bilinear Strichartz
estimates.
\begin{prop}\label{TBi Str}
Let $\phi_1,\phi_2$ be $\lambda-$periodic functions, and the
operator $I_M$ be defined in (\ref{Bi operator}), then
\begin{equation}\label{Bi Str}
    \left\|\eta^2(t)I_M(S_\lambda\phi_1,S_\lambda\phi_2)\right\|_{L^2_{xt}}\leq
    C(M,\lambda)\|\phi_1\|_{L^2_x}\|\phi_2\|_{L^2_x},
\end{equation}
where
\begin{equation}\label{CNlambda}
C(M,\lambda)=\biggl\{
\begin{array}{ll}
1,&M\leq 1,\\
(\frac{1}{M}+\frac{1}{\lambda})^{\frac{1}{2}},&M>1.
\end{array}
\end{equation}
\end{prop}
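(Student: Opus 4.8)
The plan is to reduce the bilinear estimate \eqref{Bi Str} for the operator $I_M$ to the known estimate \eqref{Bi Str1}, by a Littlewood--Paley decomposition of the two inputs together with an examination of when the cutoff $\chi_{\{|\xi_1^2-\xi_2^2|\gtrsim M\}}$ is active. First I would dispose of the trivial range $M\le 1$: there $C(M,\lambda)\sim 1$, and since $|I_M(f,g)|\le |S_\lambda\phi_1|\,|S_\lambda\phi_2|$ pointwise after removing the cutoff, the bound follows from \eqref{Bi Str1} applied dyadically, or more simply from the $L^4_{xt}$ Strichartz estimate \eqref{XE1} by Cauchy--Schwarz, $\|\eta^2 S_\lambda\phi_1\,S_\lambda\phi_2\|_{L^2_{xt}}\le \|\eta S_\lambda\phi_1\|_{L^4_{xt}}\|\eta S_\lambda\phi_2\|_{L^4_{xt}}\lesssim\|\phi_1\|_{L^2}\|\phi_2\|_{L^2}$. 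So the substance is the case $M>1$, where the target constant is $(\frac1M+\frac1\lambda)^{1/2}$.

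For $M>1$, write $\phi_i=\sum_{N_i}P_{N_i}\phi_i$ with $N_i$ dyadic, so that $P_{N_i}\phi_i$ has frequency support in $\{|\xi|\sim N_i\}$; set $N_{\max}=\max(N_1,N_2)$ and $N_{\min}=\min(N_1,N_2)$. The key algebraic observation is that on the support of $\widehat{I_M(P_{N_1}\phi_1,P_{N_2}\phi_2)}$ one has both $|\xi_1|\sim N_1$, $|\xi_2|\sim N_2$ and $|\xi_1^2-\xi_2^2|=|\xi_1-\xi_2||\xi_1+\xi_2|\gtrsim M$; since $|\xi_1-\xi_2|\le |\xi_1|+|\xi_2|\lesssim N_{\max}$ and likewise $|\xi_1+\xi_2|\lesssim N_{\max}$, the condition forces $N_{\max}^2\gtrsim M$, i.e. $N_{\max}\gtrsim M^{1/2}$. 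This already restricts the sum. Moreover, decomposing further, if $N_1\sim N_2\sim N$ I apply \eqref{Bi Str1} directly to the pair $(P_{N_1}\phi_1,P_{N_2}\phi_2)$, which costs $\tilde C(N,\lambda)=(\frac1{\sqrt N}+\frac1\lambda)^{1/2}\lesssim(\frac1{\sqrt{M^{1/2}}}+\frac1\lambda)^{1/2}$, but this is $(\frac1{M^{1/2}}+\frac1\lambda)^{1/2}$, which is \emph{worse} than the claimed $(\frac1M+\frac1\lambda)^{1/2}$. Hence the genuine gain in \eqref{Bi Str} over a naive dyadic application of \eqref{Bi Str1} must come from the extra cutoff: the region where $N_1\sim N_2$ \emph{and} $|\xi_1^2-\xi_2^2|\gtrsim M$ is the region where the two frequencies, although comparable in size, are separated from each other and from $\pm$ one another on a scale dictated by $M$. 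The plan here is to re-examine the change-of-variables/counting argument of \cite{CKSTT-03-KDV} that produces \eqref{Bi Str1}: there one estimates, after Plancherel in $x$ and in $t$, a normalized count of the number of $(\xi_1,\xi_2)\in(\Z/\lambda)^2$ with $\xi_1+\xi_2$ fixed, $|\xi_i|\sim N$, and the resonance function $\xi_1^3+\xi_2^3-(\xi_1+\xi_2)^3=-3\xi_1\xi_2(\xi_1+\xi_2)$ in a unit window; the Jacobian of $(\xi_1,\xi_2)\mapsto(\xi_1+\xi_2,\,\xi_1^3+\xi_2^3-(\xi_1+\xi_2)^3)$ is proportional to $\xi_1^2-\xi_2^2$, so inserting the extra restriction $|\xi_1^2-\xi_2^2|\gtrsim M$ improves the count by the factor $M$ (versus $N$ in the unrestricted case), which is exactly what upgrades $\frac1{\sqrt N}$ to $\frac1M$ and the $\frac1\lambda$ term survives from the lattice spacing as before.

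Having the dyadic-block estimate $\|\eta^2 I_M(P_{N_1}\phi_1,P_{N_2}\phi_2)\|_{L^2_{xt}}\lesssim C(M,\lambda)\|P_{N_1}\phi_1\|_{L^2}\|P_{N_2}\phi_2\|_{L^2}$ whenever $N_1\sim N_2$, and the analogous (in fact easier, and with a better constant) estimate when $N_1\not\sim N_2$ — in the high–low interaction $N_1\gg N_2$ one has $|\xi_1^2-\xi_2^2|\sim N_1^2$ automatically, so the cutoff is either vacuous or all of the block, and one just uses bilinear Strichartz / $L^4$ with the gain $N_1^{-1}\lesssim M^{-1}$ coming from $N_1\gtrsim M^{1/2}$ being too weak, forcing one again to track the resonance Jacobian which is now $\sim\xi_1^2\sim N_1^2\gtrsim M$ — I sum over dyadic $N_1,N_2$. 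The off-diagonal blocks $N_1\not\sim N_2$ are almost orthogonal in the output frequency $\xi=\xi_1+\xi_2\sim N_{\max}$, so a Cauchy--Schwarz in the dyadic parameters, together with square-summability $\sum_{N_i}\|P_{N_i}\phi_i\|_{L^2}^2\sim\|\phi_i\|_{L^2}^2$, closes the sum; the diagonal blocks $N_1\sim N_2$ are handled by first summing in the (single) free dyadic parameter. I would be slightly careful that the time cutoff $\eta^2(t)$ does not interact badly with the dyadic sum — it does not, since $\eta^2$ is a fixed Schwartz function independent of frequency and $\|\eta^2 F\|_{L^2_{xt}}\le\|F\|_{L^2_x L^2_t}$ type bounds are untouched, or one absorbs it via \eqref{XE1}. \textbf{Main obstacle.} The real work is Step concerning the diagonal $N_1\sim N_2$: extracting the factor $M$ rather than $\sqrt N$ requires re-doing the lattice-point count of \cite{CKSTT-03-KDV} with the additional constraint $|\xi_1^2-\xi_2^2|\gtrsim M$ built in, tracking how this constraint shrinks the number of admissible $\xi_1$ once $\xi=\xi_1+\xi_2$ and the modulation $\tau-\xi^3$ are frozen — equivalently, understanding the level sets of the resonance function $3\xi_1\xi_2\xi$ as $\xi$ varies and seeing that $|\xi_1^2-\xi_2^2|\gtrsim M$ keeps the map $\xi_1\mapsto 3\xi_1\xi_2\xi$ non-degenerate with derivative $\gtrsim M$. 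Everything else — the $M\le1$ case, the off-diagonal blocks, and the summation in Littlewood--Paley pieces — is routine.
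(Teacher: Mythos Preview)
Your core insight is exactly the paper's: once you fix the output frequency $\xi=\xi_1+\xi_2$ and the modulation $\tau$, the map $\xi_1\mapsto \xi_1^3+(\xi-\xi_1)^3$ has derivative $3(\xi_1^2-\xi_2^2)$, so the cutoff $|\xi_1^2-\xi_2^2|\gtrsim M$ forces the admissible $\xi_1$'s to lie in a set of length $O(1/M)$, whence the lattice count $\lesssim \lambda/M+1$ and the constant $(\tfrac1M+\tfrac1\lambda)^{1/2}$. That is precisely what the paper proves.

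Where you and the paper differ is the packaging. You propose a Littlewood--Paley decomposition of both inputs, then a diagonal/off-diagonal split, then the counting argument on each block, then a re-summation. The paper skips all of that: it applies Plancherel directly to $\eta^2 I_M(S_\lambda\phi_1,S_\lambda\phi_2)$, then Cauchy--Schwarz in the $\xi_1$-integral (with weight $\psi(\tau-\xi_1^3-\xi_2^3)$) to peel off $\|\phi_1\|_{L^2}\|\phi_2\|_{L^2}$ in one stroke, reducing everything to the uniform bound
\[
\sup_{\xi,\tau}\int_{\xi_1+\xi_2=\xi}\chi_{\{|\xi_1^2-\xi_2^2|\gtrsim M\}}\,\psi(\tau-\xi_1^3-\xi_2^3)\,(d\xi_1)_\lambda\ \lesssim\ \frac1M+\frac1\lambda.
\]
No dyadic sum, no orthogonality bookkeeping. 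Your route works but adds steps that are simply not needed; the direct Cauchy--Schwarz already handles all frequency interactions at once.

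One point of care in the counting that your heuristic glosses over: the statement ``derivative $\gtrsim M$ so level set has length $\lesssim 1/M$'' is not literally valid near the turning point $\xi_1=\xi/2$, where the derivative $3(\xi_1^2-\xi_2^2)$ vanishes. The paper handles this with a short case split: writing the constraint as $(\xi_1-\xi/2)^2=a+O(1/|\xi|)$ with $a=(\tau-\tfrac14\xi^3)/(3\xi)$, it treats $|a|\lesssim 1/|\xi|$ (there the cutoff itself forces $|\xi|\gtrsim M^2$ and hence $|\xi_1-\xi/2|\lesssim 1/M$) separately from $|a|\gg 1/|\xi|$ (there the square-root branch argument gives the $1/M$ length directly). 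Your sketch would need this wrinkle to be complete.
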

\begin{proof}  When $M\lesssim 1$, it easily follows by
$L^4_{xt}L^4_{xt}$-H\"{o}lder and (\ref{XE1}). So we only consider
the case $M\gg1$. Then, by Plancherel's identity, the left-hand side (which denotes simply by LHS) of \eqref{Bi Str}  equals to
\begin{align*}
&\left\|\displaystyle\int_{\stackrel{\xi_1+\xi_2=\xi,} {
\tau_1+\tau_2=\tau}}\chi_{\{|\xi_1^2-\xi_2^2|\gtrsim
M\}}\hat{\eta}(\tau_1-\xi_1^3)\hat{\eta}(\tau_2-\xi_2^3)
\phi_1(\xi_1)\phi_2(\xi_2)\,(d\xi_1)_\lambda\,d\tau_1\right\|_{L^2((d\xi)_{\lambda}d\tau)}\\
=&
\left\|\displaystyle\int_{\xi_1+\xi_2=\xi}\chi_{\{|\xi_1^2-\xi_2^2|\gtrsim
M\}}\psi(\tau-\xi_1^3-\xi_2^3)
\phi_1(\xi_1)\phi_2(\xi_2)\,(d\xi_1)_\lambda\right\|_{L^2((d\xi)_{\lambda}d\tau)},
\end{align*}
where $\psi=\hat{\eta}\ast\hat{\eta}$. Then by H\"{o}lder's
inequality,
\begin{align*}
\mbox{LHS of }\eqref{Bi Str}\lesssim &
\Big\|\Big(\int_{\xi_1+\xi_2=\xi}\chi_{\{|\xi_1^2-\xi_2^2|\gtrsim
M\}}^2\psi(\tau-\xi_1^3-\xi_2^3) \,(d\xi_1)_\lambda\Big)^{\frac12}\\
&\quad \cdot
\Big(\int_{\xi_1+\xi_2=\xi} \psi(\tau-\xi_1^3-\xi_2^3)\phi_1(\xi_1)^2\phi_2(\xi_2)^2\,(d\xi_1)_\lambda\Big)^{\frac12}\Big\|_{L^2((d\xi)_{\lambda}d\tau)}\\
\lesssim &\left\|\displaystyle\int_{\xi_1+\xi_2=\xi}\chi_{\{|\xi_1^2-\xi_2^2|\gtrsim
M\}}\psi(\tau-\xi_1^3-\xi_2^3)\,(d\xi_1)_\lambda
\right\|_{L^\infty((d\xi)_{\lambda}d\tau)}^{\frac{1}{2}}\\
&\quad \cdot
\left\|\int_{\xi_1+\xi_2=\xi} \psi(\tau-\xi_1^3-\xi_2^3)\phi_1(\xi_1)^2\phi_2(\xi_2)^2\,(d\xi_1)_\lambda\right\|_{L^1((d\xi)_{\lambda}d\tau)}^{\frac12}\\
\lesssim &\left\|\displaystyle\int_{\xi_1+\xi_2=\xi}\chi_{\{|\xi_1^2-\xi_2^2|\gtrsim
M\}}\psi(\tau-\xi_1^3-\xi_2^3)\,(d\xi_1)_\lambda
\right\|_{L^\infty((d\xi)_{\lambda}d\tau)}^{\frac{1}{2}}\|\phi_1\|_{L^2}\|\phi_2\|_{L^2}.
\end{align*}
Therefore, we only need to show
\begin{equation}\label{2.17}
B:=\left\|\displaystyle\int_{\xi_1+\xi_2=\xi}\chi_{\{|\xi_1^2-\xi_2^2|\gtrsim
M\}}\psi(\tau-\xi_1^3-\xi_2^3)\,(d\xi_1)_\lambda
\right\|_{L^\infty((d\xi)_{\lambda}d\tau)} \lesssim C(M,\lambda)^2.
\end{equation}
Indeed, let the set
\begin{eqnarray*}
A_{\xi,\tau}=\{\xi_1\in \frac{1}{\lambda}\Z: \xi_2=\xi-\xi_1, |\xi_1^2-\xi_2^2|\gtrsim
M, \tau-\xi_1^3-\xi_2^3=O(1)\},
\end{eqnarray*}
then we have
\begin{equation}\label{Bound}
    B\lesssim \frac{1}{\lambda}\sup\limits_{\xi\in \frac{1}{\lambda}\Z,\tau\in\R}\#A_{\xi,\tau}.
\end{equation}
So it reduces to estimate $\sup\limits_{\xi\in \frac{1}{\lambda}\Z,\tau\in\R}\#A_{\xi,\tau}$. To this end, we rewrite $A_{\xi,\tau}$ as
\begin{eqnarray}
A_{\xi,\tau} &=& \{\xi_1\in \frac{1}{\lambda}\Z: |\xi||2\xi_1-\xi|\gtrsim  M,
(\xi_1-\frac{\xi}{2})^2=a+O(1/|\xi|)\}
\label{2.25}\\
&=& \{\xi_1\in \frac{1}{\lambda}\Z: \xi_2=\xi-\xi_1, |\xi_1^2-\xi_2^2|\gtrsim
M, \xi_1=\frac{1}{2}\xi\pm\sqrt{a+O(1/|\xi|)}\},\label{2.26}
\end{eqnarray}
where $a=\frac{\tau-\frac{1}{4}\xi^3}{3\xi}$. Here, we only consider the case $''+''$ in \eqref{2.26}, that is, $\xi_1> \frac12\xi$, and denote the corresponding set as $A_{\xi,\tau}^+$.
The other case $''-''$ is symmetrical. Now we show that
$A_{\xi,\tau}^+$ belongs to a set of length $\frac{1}{M}$. To this end, we consider the following
two cases separately:
$$
\textbf{Case 1}: |a| \lesssim 1/|\xi|; \qquad \textbf{Case 2}: |a| \gg 1/|\xi|.
$$
\textbf{Case 1:} $ |a| \lesssim 1/|\xi|$. By (\ref{2.25}), we have $(2\xi_1-\xi)^2\lesssim
1/|\xi|$ and thus
\begin{equation}\label{2.27}
M^2\lesssim (2\xi_1-\xi)^2\xi^2\lesssim |\xi|.
\end{equation}
Thus we get $(2\xi_1-\xi)^2\lesssim
1/M^2$, that is,
$$
\xi_1=\frac{1}{2}\xi+O(\frac 1M).
$$
This implies that $A_{\xi,\tau}$
belongs to a set of length $\frac{1}{M}$.

\noindent\textbf{Case 2:}$|a| \gg 1/|\xi|.$ By (\ref{2.25}) again one has
$(2\xi_1-\xi)^2\sim a$ and thus
$$
a\gtrsim \frac{M^2}{\xi^2}.
$$
For any $x_1,x_2\in A_{\xi,\tau}^+$, by (\ref{2.26}) we find
\begin{eqnarray*}
|x_1-x_2|&=&\left|\sqrt{a+\varepsilon_1}-\sqrt{a+\varepsilon_2}\right|\\
&=&\frac{|\varepsilon_1-\varepsilon_2|}{\sqrt{a+\varepsilon_1}+\sqrt{a+\varepsilon_2}}\lesssim
\frac{1/|\xi|}{M/|\xi|}=
\frac{1}{M},
\end{eqnarray*}
where $\varepsilon_1,\varepsilon_2=O(1/|\xi|)$ and we have used the restriction $|a| \gg 1/|\xi|$. This also implies that $A_{\xi,\tau}^+$
belongs to a set of length $\frac{1}{M}$.
Therefore, no matter what case, we have
$$
\#A_{\xi,\tau}\lesssim \frac{\lambda}{M}+1.
$$
Then by (\ref{2.17}) and (\ref{Bound}), we have the claim.
\end{proof}

This proposition implies
\begin{cor}Let $u=u(t,x),v=v(t,x)$ be the $\lambda-$periodic functions of $x$,
then
\begin{equation}\label{EIN}
    \left\|\eta^2(t)I_M(u,v)\right\|_{L^2_{xt}}
    \lesssim
    C(M,\lambda)\|u\|_{X_{0,\frac{1}{2}+}}\|v\|_{X_{0,\frac{1}{2}+}}.
\end{equation}
\end{cor}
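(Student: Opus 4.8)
The plan is to derive \eqref{EIN} from Proposition \ref{TBi Str} by the standard transference principle: every function in $X_{0,\frac12+}$ is a continuous superposition of free Airy evolutions indexed by the modulation variable, and both the bilinear operator $I_M$ and the temporal cut-off $\eta^2(t)$ respect this decomposition, since $I_M$ acts only in $x$ (for each fixed $t$) while $\eta^2$ depends only on $t$. Here $I_M(u,v)(t,x)$ is understood as the application of $I_M$ in the spatial variable for each fixed time.

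First, for $u$ that is Schwartz in $(t,x)$ and $\lambda$-periodic in $x$, set $\mu=\tau-\xi^3$ and $\widehat{g_\mu}(\xi):=\hat u(\xi^3+\mu,\xi)$; then
\begin{equation*}
u(t,x)=\int_{\R}e^{it\mu}\,\big(S_\lambda(t)g_\mu\big)(x)\,d\mu,
\end{equation*}
and, using Plancherel \eqref{Plancherel} together with the change of variables $\tau\mapsto\xi^3+\mu$ in the definition \eqref{X},
\begin{equation*}
\int_{\R}\langle\mu\rangle^{1+}\,\|g_\mu\|_{L^2_x}^2\,d\mu=\|u\|_{X_{0,\frac12+}}^2 .
\end{equation*}
Introduce $h_{\mu'}$ for $v$ in the same way. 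Since $I_M$ is bilinear and acts in $x$ alone, while $e^{it\mu}$ and $e^{it\mu'}$ are $x$-independent scalars, one gets
\begin{equation*}
\eta^2(t)\,I_M(u,v)(t,x)=\int_{\R}\!\!\int_{\R}e^{it(\mu+\mu')}\,\eta^2(t)\,I_M\big(S_\lambda g_\mu,S_\lambda h_{\mu'}\big)(t,x)\,d\mu\,d\mu' .
\end{equation*}
Taking the $L^2_{xt}$-norm, applying Minkowski's inequality to move it inside the $(\mu,\mu')$-integral (the factors $e^{it(\mu+\mu')}$ have modulus one), and then invoking Proposition \ref{TBi Str} yields
\begin{equation*}
\big\|\eta^2(t)\,I_M(u,v)\big\|_{L^2_{xt}}\le C(M,\lambda)\Big(\int_{\R}\|g_\mu\|_{L^2_x}\,d\mu\Big)\Big(\int_{\R}\|h_{\mu'}\|_{L^2_x}\,d\mu'\Big).
\end{equation*}

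Finally I would split $\|g_\mu\|_{L^2_x}=\langle\mu\rangle^{-\frac12-}\cdot\langle\mu\rangle^{\frac12+}\|g_\mu\|_{L^2_x}$ and apply Cauchy--Schwarz in $\mu$; since $\langle\mu\rangle^{-1-}\in L^1_\mu$, the displayed identity above gives $\int_{\R}\|g_\mu\|_{L^2_x}\,d\mu\lesssim\|u\|_{X_{0,\frac12+}}$, and likewise $\int_{\R}\|h_{\mu'}\|_{L^2_x}\,d\mu'\lesssim\|v\|_{X_{0,\frac12+}}$. This proves \eqref{EIN} for Schwartz $u,v$, and the general case follows by density. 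There is no genuinely hard step; the only point requiring care is the bookkeeping that $I_M$ and $\eta^2(t)$ separate cleanly into the $x$- and $t$-variables, which is exactly what legitimizes the superposition — and it is also what forces the exponent $b=\frac12+$ here, rather than the $b=\frac13$ available for the plain $L^4$ estimate \eqref{XE1}, the extra half-derivative in $\tau$ being spent on the $\mu$-integrability in the last step.
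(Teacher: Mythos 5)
Your proof is correct and is precisely the standard transference argument that the paper invokes implicitly when it writes ``This proposition implies'' before stating the corollary: decompose $u$ and $v$ into superpositions of modulated free Airy evolutions, observe that $I_M$ acts only in $x$ while $\eta^2$ depends only on $t$, apply Minkowski together with Proposition~\ref{TBi Str}, and pay for the $\mu$-integration with the $\frac12+$ of temporal regularity via Cauchy--Schwarz. Nothing to add.
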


\begin{remark}\label{rem:Bi-Str}
In particular, we set $\lambda$ to be the number in \eqref{lambda}. Then we see that $C(N^2,\lambda)$, for which bound
we use in this paper, has the similar size of $\lambda^{-\frac12}$ rather than $N^{-1}$. Indeed, when $s\ge \frac12$, $k=3,4$,
$$
(1-s)/(\frac{2}{k}+s-\frac{1}{2})<2,
$$
thus $\lambda^{-\frac12}>N^{-1}$.
This means that the efficacy of the bilinear Strichartz estimate in the periodic case is exactly weaker than the one in the real line case.
\end{remark}

\begin{cor}\label{cor:2.2} Let $u,v,I_M$ be as Corollary 2.1, and let $\lambda$ be the number in \eqref{lambda},
then for $N\gg 1$,
\begin{equation}\label{CEIN}
    \left\|\eta^2(t)I_{N^2}(u,v)\right\|_{L^2_{xt}}
    \lesssim
    \lambda^{-\frac12+}\|u\|_{X_{0,\frac{1}{2}-}}\|v\|_{X_{0,\frac{1}{2}-}}.
\end{equation}
\end{cor}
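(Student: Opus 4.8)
The plan is to deduce \eqref{CEIN} from the corollary \eqref{EIN} with $M = N^2$ together with the explicit form of $C(N^2,\lambda)$ and the sizing observation already recorded in Remark \ref{rem:Bi-Str}. The point is purely to replace the two $X_{0,\frac12+}$ norms by the slightly weaker $X_{0,\frac12-}$ norms at the cost of an $\lambda^{0+}$ loss, which is harmless since we are content with $\lambda^{-\frac12+}$ on the right. First I would invoke \eqref{EIN} with $M = N^2$, giving
\begin{equation*}
\left\|\eta^2(t)I_{N^2}(u,v)\right\|_{L^2_{xt}}
\lesssim
C(N^2,\lambda)\,\|u\|_{X_{0,\frac12+}}\|v\|_{X_{0,\frac12+}} .
\end{equation*}
Since $N\gg 1$ we have $C(N^2,\lambda) = (N^{-2}+\lambda^{-1})^{1/2} \le \lambda^{-1/2}$, and in fact, by Remark \ref{rem:Bi-Str}, for $s\ge\frac12$ and $k=3,4$ one has $\lambda^{-1/2}\ge N^{-1}$, so $C(N^2,\lambda)\sim \lambda^{-1/2}$ up to constants; in any case $C(N^2,\lambda)\lesssim \lambda^{-\frac12}$, which already dominates the desired $\lambda^{-\frac12+}$.

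The remaining issue is the gap between $X_{0,\frac12+}$ and $X_{0,\frac12-}$ in the $b$-index. Here I would argue as in Lemma \ref{lem:non-smooth} and the proof of Proposition \ref{TBi Str}: the bilinear output is localized in time by the fixed cutoff $\eta^2(t)$, so only the behaviour of $u,v$ on a bounded time interval matters. Concretely, write $u = \chi_{[-2,2]}(t)u + (1-\chi_{[-2,2]}(t))u$; the second piece contributes nothing after multiplication by $\eta^2(t)$ (whose support lies in $|t|\le 2$) in the bilinear form, since $\widehat{I_M}$ only convolves in $\xi$. For the first piece, Lemma \ref{lem:non-smooth} (applied on a slightly larger interval, or its obvious variant with $\chi_{[-2,2]}$) yields $\|\chi_{[-2,2]}(t)u\|_{X_{0,\frac12+}} \lesssim \|u\|_{X_{0,\frac12-}}$ is \emph{false} as stated — the safe direction is the reverse — so instead I would redo the dyadic/Hölder reduction of Proposition \ref{TBi Str} directly: repeating that argument verbatim but carrying $\langle\tau_j-\xi_j^3\rangle^{\frac12-}$ in place of $\langle\tau_j-\xi_j^3\rangle^{0}$ shows the only place the $b$-index enters is in bounding $\|\psi\|$-type convolutions of the weight $\hat\eta$, and replacing $\eta$ by a function whose Fourier transform decays only polynomially (as forced by $b=\frac12-$ instead of $\frac12+$) costs at most $\lambda^{0+}$ after summing the dyadic pieces in the temporal frequency. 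Collecting, $C(N^2,\lambda)\cdot\lambda^{0+}\lesssim \lambda^{-\frac12+}$, which is \eqref{CEIN}.

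The main obstacle is this last step: making the passage from $\frac12+$ to $\frac12-$ rigorous without losing more than $\lambda^{0+}$. The cleanest route is to note that the kernel estimate \eqref{2.17}–\eqref{Bound} used in Proposition \ref{TBi Str} is completely insensitive to the $b$-index — it only uses that $\psi = \hat\eta * \hat\eta$ is integrable with bounded $L^\infty$ norm — and the $b$-index only entered when we used Plancherel to pass from $X_{0,\frac12+}$ to the $L^2$ of $\hat\eta(\tau_j-\xi_j^3)\widehat{\phi_j}(\xi_j)$; with $b=\frac12-$ one instead writes $\widehat{u}(\tau_1,\xi_1) = \langle\tau_1-\xi_1^3\rangle^{-(\frac12-)}\cdot\langle\tau_1-\xi_1^3\rangle^{\frac12-}\widehat u$, uses $\langle\tau_1-\xi_1^3\rangle^{-(\frac12-)}\in L^{2+}_{\tau_1}$, and absorbs the $L^{2+}\to L^2$ Hölder loss into $\lambda^{0+}$ exactly as in the proof of Lemma \ref{lem:XE6-12}(2). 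This is routine once set up, and all the genuinely new content is already contained in Proposition \ref{TBi Str}.
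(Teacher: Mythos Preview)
Your reduction to \eqref{EIN} with $M=N^2$ and the identification $C(N^2,\lambda)\sim\lambda^{-1/2}$ is correct, but the passage from $X_{0,\frac12+}$ to $X_{0,\frac12-}$ is where the argument breaks. The transfer-principle step you describe at the end amounts to writing $u$ as a superposition of modulated free solutions,
\[
u(t)=\int e^{it\theta}\,S_\lambda(t)\phi_\theta\,d\theta,\qquad
\|u\|_{X_{0,b}}^2=\int\langle\theta\rangle^{2b}\|\phi_\theta\|_{L^2}^2\,d\theta,
\]
and then applying Minkowski and Cauchy--Schwarz in $\theta$. That last step needs $\langle\theta\rangle^{-b}\in L^2_\theta$, i.e.\ $b>\tfrac12$; with $b=\tfrac12-$ you only get $\langle\theta\rangle^{-b}\in L^{2+}_\theta$, and there is no matching $L^{2-}_\theta$ control available for the other factor. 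The resulting loss lives entirely in the modulation variable $\theta$ and has nothing to do with the spatial period $\lambda$, so it cannot be ``absorbed into $\lambda^{0+}$''. The analogy with Lemma~\ref{lem:XE6-12}(2) is misleading: there the $\lambda^{0+}$ comes from the periodic $L^6$ Strichartz estimate \eqref{XE2} in the \emph{spatial} frequency, not from any slack in $b$. Likewise, multiplying by $\eta^2(t)$ or $\chi_{[-2,2]}(t)$ cannot raise the $b$-index: Lemma~\ref{lem:non-smooth} goes the other way, as you yourself noted.

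The fix---and this is what the paper does---is not to tinker with the transfer principle but to interpolate with a second bilinear bound that already lives at $b<\tfrac12$. From H\"older and the $L^4$ Strichartz estimate \eqref{XE1} one has
\[
\big\|\eta^2(t)I_M(u,v)\big\|_{L^2_{xt}}
\le \|u\|_{L^4_{xt}}\|v\|_{L^4_{xt}}
\lesssim \|u\|_{X_{0,\frac13}}\|v\|_{X_{0,\frac13}},
\]
with constant independent of $M$ and $\lambda$. Interpolating this against \eqref{EIN} (bilinear complex interpolation, or freeze one factor and interpolate in the other, then iterate) at parameter $\theta=1-$ gives
\[
\big\|\eta^2(t)I_M(u,v)\big\|_{L^2_{xt}}
\lesssim C(M,\lambda)^{1-}\,\|u\|_{X_{0,\frac12-}}\|v\|_{X_{0,\frac12-}},
\]
and taking $M=N^2$ with $C(N^2,\lambda)=\lambda^{-1/2}$ yields \eqref{CEIN}. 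The missing idea is precisely this use of \eqref{XE1} as the low-$b$ endpoint for interpolation.
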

\begin{proof}
First, by interpolating between (\ref{EIN}) and the following estimate
$$
\left\|\eta^2(t)I_M(u,v)\right\|_{L^2_{xt}}
    \lesssim \|u\|_{L^4_{xt}}\|v\|_{L^4_{xt}}
    \lesssim \|u\|_{X_{0,\frac{1}{3}}}\|v\|_{X_{0,\frac{1}{3}}},
$$
we have
\begin{equation}\label{15.05}
    \left\|\eta^2(t)I_M(u,v)\right\|_{L^2_{xt}}
    \lesssim
    C(M,\lambda)^{1-}\|u\|_{X_{0,\frac{1}{2}-}}\|v\|_{X_{0,\frac{1}{2}-}}.
\end{equation}
In particular, when $M=N^2$, by Remark \ref{rem:Bi-Str},
$$
C(N^2,\lambda)=\lambda^{-\frac12},\quad \mbox{whenever } N\gg 1.
$$
This proves the corollary.
\end{proof}

\section{Proof of Lemmas \ref{non-res}--\ref{es:M2k+2}}

\subsection{Proof of Lemma \ref{non-res}}
Note that
$$
\Omega=\Omega_1\cup \Omega_2\cup \Omega_3\cup \Omega_4,
$$
so we need to prove that in every $\Omega_j, j=1,2,3,4$,
$$
|M_{k+2}|\lesssim |\alpha_{k+2}|.
$$
The estimates in $\Omega_1, \Omega_2$ and $\Omega_4$ are almost the same as  Lemma 4.2 in \cite{Miao-Shao-Wu-Xu:2009:gKdV}, however, as one of the key lemmas in this paper, we still give a detail proof here for the sake of completeness. To simplify the notations, we set $\xi_j^*=\xi_j, j=1,\cdots, k+2$.

In $\Omega_1$, we note that $\xi_1\cdot\xi_2<0$, thus,
\begin{align*}
|\alpha_{k+2}|=&\big|\xi_1^3+\xi_2^3+\xi_3^3\big|+o(|\xi_3^3|)\\
= & \big|(\xi_1+\xi_2)(\xi_1^2-\xi_1\xi_2+\xi_2^2)+\xi_3^3\big|+o(|\xi_3^3|)\\
= & \big|\xi_3(\xi_1^2-\xi_1\xi_2+\xi_2^2-\xi_3^2)\big|+o(|\xi_3^3|)
\ge  \big|\xi_3\xi_1^2\big|+o(|\xi_3^3|)\sim |\xi_3||\xi_1|^2.
\end{align*}
Moreover, by the mean value theorem,
\begin{align*}
\big|M_{k+2}\big|
\lesssim &
\big|m_1^2\xi_1^3+m_2^2\xi_2^3\big|+\big|m_3^2\xi_3^3\big|+\cdots+\big|m_{k+2}^2\xi_{k+2}^3\big|\\
\lesssim &
m_1^2\big|\xi_1+\xi_2\big|\xi_1^2+\big|\xi_3^3\big|\\
\lesssim &
 \big|\xi_3\big|\xi_1^2.
\end{align*}
Thus we obtain the desirable estimates in $\Omega_1$.

In $\Omega_2$, we have
$$
|\alpha_{k+2}|\sim  \big|\xi_1^3+\xi_2^3\big|.
$$
Moreover,
\begin{align*}
\big|M_{k+2}\big|
\le &
\big|m_1^2\xi_1^3+m_2^2\xi_2^3\big|+\big|\xi_3^3+\cdots+m_{k+2}^2\xi_{k+2}^3\big|\\
\lesssim &
m_1^2\big|\xi_1^3+\xi_2^3\big|+\big|\xi_3^3+\cdots+m_{k+2}^2\xi_{k+2}^3\big|
\lesssim
\big|\xi_1^3+\xi_2^3\big|.
\end{align*}
So these give the desirable estimates in $\Omega_2$.

In $\Omega_3$,
on one hand, since
\begin{equation}\label{1216.53}
\big|\xi_1^3+\xi_2^3\big|\sim \xi_1^2|\xi_1+\xi_2|\gg |\xi_1||\xi_3|^2\gg |\xi_3|^3,
\end{equation}
thus,
$$
|\alpha_{k+2}|= \big|(\xi_1^3+\xi_2^3)+(\xi_3^3+\cdots+\xi_{k+2}^3)\big|\sim \big|\xi_1^3+\xi_2^3\big|.
$$
On the other hand, by the mean value theorem and \eqref{1216.53},
\begin{align*}
\big|M_{k+2}\big|
\lesssim &
\big|m_1^2\xi_1^3+m_2^2\xi_2^3\big|+\big|m_3^2\xi_3^3\big|+\cdots+\big|m_{k+2}^2\xi_{k+2}^3\big|\\
\lesssim &
m_1^2\big|\xi_1^3+\xi_2^3\big|+\big|\xi_3^3\big|\\
\lesssim &
 \big|\xi_1^3+\xi_2^3\big|.
\end{align*}
Combining these two estimates, gives the desirable estimates in $\Omega_3$.

For $\Omega_4$, we set $\overline{\xi_4}=\xi_4+\xi_5+\xi_6$, then $\xi_1+\xi_2+\xi_3+\overline{\xi_4}=0$.
Therefore,
\begin{align*}
\alpha_{k+2}=&\xi_1^3+\xi_2^3+\xi_3^3+\overline{\xi_4}^3+\big(\xi_4^3-\overline{\xi_4}^3\big)+\xi_5^3+\xi_6^3\\
=&3(\xi_1+\xi_2)(\xi_1+\xi_3)(\xi_1+\overline{\xi_4})+\big(\xi_4^3-\overline{\xi_4}^3\big)+\xi_5^3+\xi_6^3\\
= &3(\xi_1+\xi_2)(\xi_1+\xi_3)(\xi_1+\xi_4)+O(|\xi_5+\xi_6|\xi_1^2)\\
= &3(\xi_1+\xi_2)(\xi_1+\xi_3)(\xi_1+\xi_4)+O(|\xi_5|\xi_1^2).
\end{align*}
By the definition of $\Omega_4$, $ |\xi_1+\xi_2||\xi_1+\xi_3||\xi_1+\xi_4|\gg|\xi_5|\xi_1^2$. Thus we have
\begin{align}\label{17.26}
|\alpha_{k+2}|\sim |\xi_1+\xi_2||\xi_1+\xi_3||\xi_1+\xi_4|.
\end{align}
By the similar way and the mean value theorem, we have
\begin{align}\label{17.27}
M_{k+2}=&m(\xi_1)^2\xi_1^3+m(\xi_2)^2\xi_2^3+m(\xi_3)^2\xi_3^3+m(\overline{\xi_4})^2\overline{\xi_4}^3
+\big(m(\xi_4)\xi_4^3-m(\overline{\xi_4})\overline{\xi_4}^3\big)+\xi_5^3+\xi_6^3\notag\\
=&m(\xi_1)^2\xi_1^3+m(\xi_2)^2\xi_2^3+m(\xi_3)^2\xi_3^3+m(\overline{\xi_4})^2\overline{\xi_4}^3+O(|\xi_5|\xi_1^2).
\end{align}
Now we claim that
\begin{align}\label{17.28}
\big|m(\xi_1)^2\xi_1^3+m(\xi_2)^2\xi_2^3+m(\xi_3)^2\xi_3^3+m(\overline{\xi_4})^2\overline{\xi_4}^3\big|
\lesssim
|\xi_1+\xi_2||\xi_1+\xi_3||\xi_1+\xi_4|.
\end{align}
To prove this, we split it into two cases: either $|\xi_1|-|\xi_4|=o(|\xi_1|)$, or $|\xi_1|-|\xi_4|\sim |\xi_1|$.
The first case follows from the following double mean value theorem.
\begin{lem}[Double mean value theorem, Lemma 4.1 in \cite{Miao-Shao-Wu-Xu:2009:gKdV}]\label{lem:DMVT}
Let $f(\xi)=m(\xi)^2\xi^3$, then for $|\eta|,|\lambda|\ll |\xi|$,
\begin{equation}\label{DMVT}
\left| f(\xi+\eta+\lambda)-f(\xi+\eta)-f(\xi+\lambda)+f(\xi)\right|
\lesssim \big|f''(\xi)\big||\eta||\lambda| .
\end{equation}
\end{lem}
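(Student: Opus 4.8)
The plan is to write the second-order difference on the left of \eqref{DMVT} as a double integral of $f''$, and then to control $f''$ along the short segment joining the relevant arguments to $\xi$. Setting $h(u):=f(\xi+\lambda+u)-f(\xi+u)$, one has
$$
f(\xi+\eta+\lambda)-f(\xi+\eta)-f(\xi+\lambda)+f(\xi)=h(\eta)-h(0)=\int_0^{\eta}\!\!\int_0^{\lambda} f''(\xi+u+v)\,dv\,du,
$$
and therefore
$$
\big|f(\xi+\eta+\lambda)-f(\xi+\eta)-f(\xi+\lambda)+f(\xi)\big|\le |\eta||\lambda|\sup_{|\mu-\xi|\le|\eta|+|\lambda|}\big|f''(\mu)\big|.
$$
Since $|\eta|,|\lambda|\ll|\xi|$, every such $\mu$ satisfies $|\mu|\sim|\xi|$ and has the same sign as $\xi$, so it suffices to prove the pointwise comparison $|f''(\mu)|\lesssim|f''(\xi)|$ whenever $|\mu-\xi|\ll|\xi|$.

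The core step is a two-sided bound $|f''(\nu)|\sim m(\nu)^2|\nu|$ for all $\nu\neq0$. Expanding $f=m^2\xi^3$ by the product rule gives
$$
f''(\nu)=2m(\nu)\,\nu\Big[3m(\nu)+6m'(\nu)\nu+\big(m''(\nu)+m'(\nu)^2/m(\nu)\big)\nu^2\Big],
$$
and the structural properties of the multiplier in \eqref{m} — $m$ smooth, even, monotone, $0<m\le1$, equal to $1$ on $|\nu|\le N$ and to $N^{1-s}|\nu|^{s-1}$ on $|\nu|>2N$, with the attendant symbol bounds $|m^{(j)}(\nu)|\lesssim m(\nu)\langle\nu\rangle^{-j}$ for $j=1,2$ — immediately give the upper bound $|f''(\nu)|\lesssim m(\nu)^2|\nu|$. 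For the matching lower bound one checks that the bracket stays bounded away from $0$: on $|\nu|\le N$ it equals $3$, on $|\nu|>2N$ it equals $2s(2s+1)>0$, and on the transition zone $N<|\nu|<2N$ the restriction on $s$ in the cases under consideration (in particular $s\ge\tfrac12$) forces $|m'(\nu)\nu|$ and $|m''(\nu)\nu^2|$ to be suitably small multiples of $m(\nu)$, so the bracket remains $\gtrsim m(\nu)$; equivalently, $f$ is (essentially) convex on each half-line, so $f''$ has a definite sign. Hence $|f''(\nu)|\sim m(\nu)^2|\nu|$ throughout.

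It remains to pass from $\nu=\mu$ back to $\nu=\xi$. Because $m$ is a slowly varying symbol of order zero, $|m'/m|\lesssim\langle\nu\rangle^{-1}$, so for $|\mu-\xi|\ll|\xi|$ we have $m(\mu)=m(\xi)\exp\!\big(\int_\xi^\mu m'/m\big)\sim m(\xi)$, and trivially $|\mu|\sim|\xi|$; combining with the previous paragraph,
$$
|f''(\mu)|\sim m(\mu)^2|\mu|\sim m(\xi)^2|\xi|\sim|f''(\xi)|,
$$
which inserted into the integral estimate yields \eqref{DMVT}. The step I expect to be the main obstacle is precisely the lower bound $|f''(\nu)|\gtrsim m(\nu)^2|\nu|$, i.e.\ ruling out near-cancellation in $f''$: away from $|\nu|\sim N$ this is a one-line power count, but on the transition region one genuinely needs the quantitative control of $m'\nu$ and $m''\nu^2$ against $m$, and this is where the hypothesis on $s$ enters; everything else is routine calculus.
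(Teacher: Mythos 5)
The paper does not prove this lemma — it cites it verbatim from \cite{Miao-Shao-Wu-Xu:2009:gKdV} — so your blind attempt is filling in an argument the paper leaves out. Your route is the natural one and it is essentially correct: write the second difference as the double integral $\int_0^\eta\int_0^\lambda f''(\xi+u+v)\,dv\,du$, bound it by $|\eta||\lambda|\sup_{|\mu-\xi|\le|\eta|+|\lambda|}|f''(\mu)|$, and reduce to showing $|f''(\mu)|\lesssim|f''(\xi)|$ when $|\mu-\xi|\ll|\xi|$, which in turn follows from the two-sided comparison $|f''(\nu)|\sim m(\nu)^2|\nu|$ together with $m(\mu)\sim m(\xi)$, $|\mu|\sim|\xi|$. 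You correctly identify the lower bound $|f''(\nu)|\gtrsim m(\nu)^2|\nu|$ as the only nontrivial point; note that this is precisely the relation the paper itself invokes immediately after applying the lemma (``we have used $|f''(\xi_1)|\sim m_1^2|\xi_1|$''), so the paper is tacitly relying on the same fact. One small slip in your bracket computation: for $|\nu|>2N$ with $m(\nu)=N^{1-s}|\nu|^{s-1}$, the bracket $3m+6m'\nu+\big(m''+m'^2/m\big)\nu^2$ works out to $s(2s+1)\,m(\nu)$, not the constant $2s(2s+1)$; it is a positive multiple of $m(\nu)$, which is all that is needed, but the stated value is off. Subject to that cosmetic correction (and to the usual convention that $m$ is chosen smooth and monotone so that the transition region contributes no cancellation — a point you rightly flag), your proof is complete and more explicit than what the paper offers.
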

Indeed, using \eqref{DMVT}, we have
\begin{align*}
\big|m(\xi_1)^2\xi_1^3+m(\xi_2)^2\xi_2^3&+m(\xi_3)^2\xi_3^3+m(\overline{\xi_4})^2\overline{\xi_4}^3\big|
\lesssim
m_1^2|\xi_1+\xi_2||\xi_1+\xi_3||\xi_1+\overline{\xi_4}|\\
\lesssim&
|\xi_1+\xi_2||\xi_1+\xi_3||\xi_1+\xi_4|+O(|\xi_5|\xi_1^2)
\sim
|\xi_1+\xi_2||\xi_1+\xi_3||\xi_1+\xi_4|.
\end{align*}
In the second case, we also have $|\xi_1|-|\xi_3|\sim |\xi_1|$. Thus, it gives that
$$
|\xi_1+\xi_2||\xi_1+\xi_3||\xi_1+\xi_4|\sim\xi_1^2|\xi_1+\xi_2|.
$$
Therefore,
\begin{align*}
\big|m(\xi_1)^2\xi_1^3+m(\xi_2)^2\xi_2^3&+m(\xi_3)^2\xi_3^3+m(\overline{\xi_4})^2\overline{\xi_4}^3\big|\\
\lesssim&
\big|m(\xi_1)^2\xi_1^3+m(\xi_2)^2\xi_2^3\big|+\big|m(\xi_3)^2\xi_3^3+m(\overline{\xi_4})^2\overline{\xi_4}^3\big|\\
\lesssim&
m_1^2\xi_1^2|\xi_1+\xi_2|+m_3^2\xi_3^2|\xi_3+\xi_4|
\lesssim
\xi_1^2|\xi_1+\xi_2|+\xi_1^2|\xi_5|\\
\sim&
|\xi_1+\xi_2||\xi_1+\xi_3||\xi_1+\xi_4|.
\end{align*}
This proves \eqref{17.28}. Now combining with \eqref{17.27}, we have
\begin{align*}
|M_{k+2}|\lesssim |\xi_1+\xi_2||\xi_1+\xi_3||\xi_1+\xi_4|+O(|\xi_5|\xi_1^2)
\sim  |\xi_1+\xi_2||\xi_1+\xi_3||\xi_1+\xi_4|.
\end{align*}
Together with \eqref{17.26}, we find $|M_{k+2}|\lesssim |\alpha_{k+2}|$, which is the desirable estimate in $\Omega_4$.
This completes the proof of the lemma.

\subsection{Proof of Lemma \ref{es:Mk+2}}
We may assume that $|\xi_1|\ge \cdots \ge |\xi_{k+2}|$ by symmetries, and set $\xi_6=0$ if $k=3$. Recall that
$$
\Gamma_{k+2}\backslash \Omega=\big(\Gamma_{k+2}\backslash \Omega_1\big)\cap \big(\Gamma_{k+2}\backslash \Omega_2\big)\cap \big(\Gamma_{k+2}\backslash \Omega_3\big)\cap \big(\Gamma_{k+2}\backslash \Omega_4\big).
$$

First, we consider \eqref{lem:Mk+2-1}. If $|\xi_1|\sim |\xi_2|\sim |\xi_3|$, then
\begin{align*}
\big|M_{k+2}\big|
\lesssim
\big|m_1^2\xi_1^3\big|
\sim
m_1^2|\xi_1||\xi_3|^2.
\end{align*}
If $|\xi_1|\sim |\xi_2|\gg |\xi_3|$, then by the definition of $\Omega_1$ and $\Omega_3$, we have in $\Gamma_{k+2}\backslash \Omega$,
$$
|\xi_1|\sim |\xi_2|\gg |\xi_3|\sim |\xi_4|,\quad \mbox{ and } \quad |\xi_1|\big|\xi_1+\xi_2|\big|\lesssim |\xi_3|^2.
$$
Then by the mean value theorem and the inequality $m(\xi)^2|\xi|\le m(\eta)^2|\eta|$ if $|\xi|\le |\eta|$, we have
\begin{align*}
\big|M_{k+2}\big|
\le &
\big|m_1^2\xi_1^3+m_2^2\xi_2^3\big|+\big|m_3^2\xi_3^3+\cdots+m_{k+2}^2\xi_{k+2}^3\big|\\
\lesssim &
m_1^2\xi_1^2\big|\xi_1+\xi_2\big|+m_3^2|\xi_3|^3\\
\lesssim &
m_1^2|\xi_1||\xi_3|^2+m_3^2|\xi_3|^3\\
\lesssim &
m_1^2|\xi_1||\xi_3|^2.
\end{align*}
This proves  \eqref{lem:Mk+2-1}.

Now we consider \eqref{lem:Mk+2-2}. By the definition of $\Omega_2$, we have $\big|\xi_1^3+\xi_2^3\big|\lesssim \big|\xi_3^3+\cdots+\xi_{k+2}^3\big|$ in $\Gamma_{k+2}\backslash \Omega_2$.  Then by the mean value theorem, we have
\begin{align*}
\big|M_{k+2}\big|
\le &
\big|m_1^2\xi_1^3+m_2^2\xi_2^3\big|+\big|\xi_3^3+\cdots+\xi_{k+2}^3\big|\\
\lesssim &
m_1^2\big|\xi_1^3+\xi_2^3\big|+\big|\xi_3^3+\cdots+\xi_{k+2}^3\big|\\
\lesssim &
\big|\xi_3^3+\cdots+\xi_{k+2}^3\big|\\
\lesssim &
\big|\xi_3\big|\big|\xi_4\big|\big|\xi_5\big|.
\end{align*}

We turn to consider \eqref{lem:Mk+2-3}.  According to the definition of $\Omega_3$, we split it into the following two subsets,
\begin{equation*}
\aligned
  A_{1} =&\; \{(\xi_1,\cdots,\xi_6)\in \Gamma_{k+2}\backslash \Omega: |\xi_4|\gg|\xi_5|,
  |m^2_1\xi_1^3+\cdots+m^2_4 \xi_4^3|\lesssim\big|m_5^2\xi_5^3+m_6^2\xi_6^3\big|\}; \\
  A_{2} =&\; \{(\xi_1,\cdots,\xi_6)\in \Gamma_{k+2}\backslash \Omega: |\xi_4|\gg|\xi_5|,
  |( \xi_1+ \xi_2)( \xi_1+ \xi_3)( \xi_1+ \xi_4)|\lesssim
| \xi_1|^2| \xi_5|\}.
\endaligned
\end{equation*}

In $A_{1}$, we have
\begin{align*}
\big|M_{k+2}\big|
\le &
\big|m^2_1\xi_1^3+\cdots+m^2_4 \xi_4^3\big|+\big|m_5^2\xi_5^3+m_6^2\xi_6^3\big|\\
\lesssim &
m_5^2|\xi_5|^3
\lesssim
m_1^2\xi_1^2|\xi_5|.
\end{align*}

In $A_{2}$, we may assume that $|\xi_1+\xi_3|\ll |\xi_1|$ or $|\xi_1+\xi_4|\ll |\xi_1|$. Otherwise, if $|\xi_1+\xi_3|\gtrsim |\xi_1|$ and $|\xi_1+\xi_4|\gtrsim |\xi_1|$, then from the relation in $A_{3}$, we have  $| \xi_1+ \xi_2|\lesssim |\xi_5|$, which is included in $A_{1}$.
Therefore, by the definition of $\Omega_1$, we have $|\xi_1|\sim |\xi_2|\sim |\xi_3|\sim |\xi_4|$ in $A_3$. Further, we set $\xi_1>0$ by symmetries, and then have three cases as follows,
\begin{align*}
(1), \xi_1>0, \xi_2<0,&\,\xi_3<0,\xi_4>0;\quad
(2), \xi_1>0, \xi_2<0,\xi_3>0,\xi_4<0;\\
&(3), \xi_1>0, \xi_2>0,\xi_3<0,\xi_4<0.
\end{align*}

For (1), we take $\xi=\xi_1,\eta=-(\xi_1+\xi_2), \lambda=-(\xi_1+\xi_3)$, then $|\eta|\lesssim |\lambda|\ll |\xi|$. Using Lemma \ref{lem:DMVT},  we have
\begin{align*}
|m^2_1\xi_1^3+\cdots+m^2_4 \xi_4^3|\lesssim m_1^2|\xi_1+\xi_2||\xi_1+\xi_3||\xi_1+\xi_4|
\lesssim m_1^2|\xi_1|^2|\xi_5|,
\end{align*}
where we have used $ \big|f''(\xi_1)\big|\sim m^2_1|\xi_1| $.
Thus,
\begin{align*}
\big|M_{k+2}\big|
\le &
\big|m^2_1\xi_1^3+\cdots+m^2_4 \xi_4^3\big|+\big|m_5^2\xi_5^3+m_6^2\xi_6^3\big|\\
\lesssim &
m_1^2|\xi_1+\xi_2||\xi_1+\xi_3||\xi_1+\xi_4|+\big|m_5^2\xi_5^3+m_6^2\xi_6^3\big|\\
\lesssim &
m_1^2|\xi_1|^2|\xi_5|+m_5^2|\xi_5|^3
\lesssim
m_1^2\xi_1^2|\xi_5|.
\end{align*}

For (2), we take $\xi=\xi_1,\eta=-(\xi_1+\xi_2), \lambda=-(\xi_1+\xi_4)$; For (3), we take $\xi=\xi_1,\eta=-(\xi_1+\xi_3), \lambda=-(\xi_1+\xi_4)$. Then by the same argument, we get the desired estimates.
This proves the lemma.

\subsection{Proof of Lemma \ref{es:M2k+2}} 
The proof of this lemma is essentially presented in Lemma 4.4
in \cite{Miao-Shao-Wu-Xu:2009:gKdV}, so we only present the sketch of the proof here. 
Again, we assume that $|\xi_1|\ge \cdots \ge |\xi_{k+2}|$ by symmetries. First, from Lemma \ref{non-res}, we have $|\tilde{\sigma}_{k+2}|\lesssim 1$. Therefore, $|M_{2k+2}|\lesssim |\xi_1|$.
Now we prove \eqref{M2k+2-2}. From \eqref{M2k+2}, we rewrite $M_{2k+2}$ as
\begin{align*}
M_{2k+2}=& i\tilde{\sigma}_{k+2}( \overline{\xi_1},\xi_{k+2}, \xi_{k+3},\cdots, \xi_{2k+2})  \overline{\xi_1}
+i\tilde{\sigma}_{k+2}(\xi_1, \overline{\xi_2}, \xi_{k+3}, \xi_{k+4},\cdots, \xi_{2k+2})  \overline{\xi_2}\\
&+i\tilde{\sigma}_{k+2}(\xi_1, \xi_2, \overline{\xi_3}, \xi_{k+4}, \xi_{k+5},\cdots, \xi_{2k+2})  \overline{\xi_3} +\cdots+i\tilde{\sigma}_{k+2}(\xi_1,  \xi_2, \xi_3, \cdots, \xi_{k+1}, \overline{\xi_{k+2}})  \overline{\xi_{k+2}},
\end{align*}
where $\overline{\xi_j}=\xi_j+\cdots+\xi_{k+j}$. Since $|\overline{\xi_j}|\lesssim |\xi_j|\le |\xi_3|$ for any $j=3,\cdots, k+2$, and $|\tilde{\sigma}_{k+2}|\lesssim 1$, we have
\begin{align*}
&\big|i\tilde{\sigma}_{k+2}(\xi_1, \xi_2, \overline{\xi_3}, \xi_{k+4}, \xi_{k+5},\cdots, \xi_{2k+2})  \overline{\xi_3} +\cdots+i\tilde{\sigma}_{k+2}(\xi_1,  \xi_2, \xi_3, \cdots, \xi_{k+1}, \overline{\xi_{k+2}})  \overline{\xi_{k+2}}\big|\\
\lesssim & |\overline{\xi_3}|+\cdots+|\overline{\xi_{k+2}}|\lesssim |\xi_3|.
\end{align*}
Furthermore, by the mean value theorem, 
\begin{align*}
&\big|i\tilde{\sigma}_{k+2}( \overline{\xi_1},\xi_{k+2}, \xi_{k+3},\cdots, \xi_{2k+2})  \overline{\xi_1}
+i\tilde{\sigma}_{k+2}(\xi_1, \overline{\xi_2}, \xi_{k+3}, \xi_{k+4},\cdots, \xi_{2k+2})  \overline{\xi_2}\big|\\
\lesssim & |\overline{\xi_1}+\overline{\xi_{2}}|\lesssim |\xi_3|.
\end{align*}
This proves the lemma.
\section{Proof of Proposition \ref{thm:main-2}}

We first give the fixed-time bound in Proposition \ref{thm:main-2}. By \eqref{3.13}, it reduces to the following lemma.
\begin{lem}\label{lem:fixed-time}
For any $1/2\leq s<1$,
\begin{equation*}
\left|\int_{\Gamma_{k+2}} e^{i\alpha_{k+2}t}\frac{\chi_{\Omega}M_{k+2}+\sigma_{k+2}\>\alpha_{k+2}}{i\alpha_{k+2}}
\widehat{f_\lambda}(t,\xi_1)\cdots\widehat{f_\lambda}(t,\xi_{k+2})\right| \lesssim
N^{-2+}\|Iu_\lambda(t)\|^{k+2}_{H^1_x}.
\end{equation*}
\end{lem}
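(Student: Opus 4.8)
The plan is to reduce the fixed-time bound \eqref{lem:fixed-time} to a multilinear estimate on $\Gamma_{k+2}$ and then close it using the multiplier bounds of Lemma~\ref{es:Mk+2} together with the Sobolev/interpolation estimates from Section~2. First I would observe that by Remark~\ref{rem:reduction} the integrand is supported on $|\xi_1^*|\gtrsim N$, and that on the non-resonant set $\Omega$ the numerator $\chi_\Omega M_{k+2}+\sigma_{k+2}\alpha_{k+2}$ is dominated by $|\alpha_{k+2}|$ (Lemma~\ref{non-res}) plus $|\alpha_{k+2}|$ from the $\sigma_{k+2}\alpha_{k+2}$ term, so on $\Omega$ the multiplier $\frac{\chi_\Omega M_{k+2}+\sigma_{k+2}\alpha_{k+2}}{i\alpha_{k+2}}$ is $O(1)$; off $\Omega$ only the $\sigma_{k+2}$-term survives with multiplier $O(1)$ as well. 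Hence in all cases the multiplier in front of $\widehat{f_\lambda}(\xi_1)\cdots\widehat{f_\lambda}(\xi_{k+2})$ is bounded, but that is far too crude: the claimed gain is $N^{-2+}$ relative to $\|Iu_\lambda\|_{H^1}^{k+2}$, and $\|Iu_\lambda\|_{H^1}^{k+2}$ already ``costs'' two derivatives on the top frequency. So the real point is that one must exhibit, pointwise on $\Gamma_{k+2}$, the bound
\begin{equation*}
\left|\frac{\chi_\Omega M_{k+2}+\sigma_{k+2}\alpha_{k+2}}{i\alpha_{k+2}}\right|
\lesssim \frac{m(\xi_1^*)\,m(\xi_2^*)\,\langle\xi_1^*\rangle\,\langle\xi_2^*\rangle}{N^{2-}\prod_{j=1}^{k+2}m(\xi_j^*)\langle\xi_j^*\rangle}\,,
\end{equation*}
i.e.\ after distributing $m_j\langle\xi_j\rangle$ onto each factor to reconstruct $\|Iu_\lambda\|_{H^1}$ (via $\widehat{Iu_\lambda}(\xi)=m(\xi)\widehat{f_\lambda}(\xi)$), the leftover multiplier has size $N^{-2+}$.

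Concretely, I would split $\Gamma_{k+2}$ into the regimes already appearing in Lemma~\ref{es:Mk+2}: the generic region where \eqref{lem:Mk+2-1} gives $|M_{k+2}|\lesssim m(\xi_1^*)^2|\xi_1^*||\xi_3^*|^2$; the region $|\xi_1^*|\sim|\xi_2^*|\gtrsim N\gg|\xi_3^*|\sim|\xi_4^*|$ where \eqref{lem:Mk+2-2} gives $|M_{k+2}|\lesssim|\xi_3^*||\xi_4^*||\xi_5^*|$; and the region $|\xi_4^*|\gg|\xi_5^*|$ where \eqref{lem:Mk+2-3} gives $|M_{k+2}|\lesssim m(\xi_1^*)^2|\xi_1^*|^2|\xi_5^*|$. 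In each regime the extra input is that $\Omega$ has been \emph{removed}, so we also know something about $|\alpha_{k+2}|$ — for instance in the $\Omega_3^c\cap\Omega_1^c$ part of the generic region we have $|\xi_1^*||\xi_1^*+\xi_2^*|\lesssim|\xi_3^*|^2$, which is exactly what lets $m(\xi_1^*)^2|\xi_1^*||\xi_3^*|^2$ be re-expressed, after dividing by the reconstruction weight $\prod m_j\langle\xi_j\rangle\sim m_1^2\langle\xi_1^*\rangle^2\langle\xi_3^*\rangle^2$ (using that $\xi_5^*,\ldots$ are $O(N)$ so their $m_j\langle\xi_j\rangle\sim1$ on the reduced support), as $\lesssim \langle\xi_1^*\rangle^{-1}\lesssim N^{-1}$ — still not enough, so the $\chi_\Omega$ splitting matters: on $\Omega$ one divides by $\alpha_{k+2}$ and uses $|\alpha_{k+2}|\gtrsim$ the relevant product of gaps, while off $\Omega$ one uses the enhanced smallness of $M_{k+2}$ from Lemma~\ref{es:Mk+2} directly (with the $\sigma_{k+2}\alpha_{k+2}/\alpha_{k+2}=\sigma_{k+2}$ piece, which contributes $m_1\cdots m_{k+2}\lesssim m_1 m_2\langle\xi_1\rangle^{-1}\langle\xi_2\rangle^{-1}\cdots$, comfortably below $N^{-2+}$ since $\langle\xi_1^*\rangle\gtrsim N$ and $m(\xi_1^*)\langle\xi_1^*\rangle^{1/2}\lesssim$ appropriate power).

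After the pointwise multiplier bound, the remaining step is standard: one writes the integral over $\Gamma_{k+2}$ as an $L^2$-pairing, uses $|\widehat{f_\lambda}|=|\widehat{Iu_\lambda}|/m$, puts the full weight $m_j\langle\xi_j\rangle$ on each of two of the factors to form $\|Iu_\lambda\|_{H^1_x}$ (at fixed time) and on the remaining $k$ factors bounds them in $L^\infty_x\lesssim\|Iu_\lambda\|_{H^1_x}$ by Sobolev embedding on $\T_\lambda$ (here $H^1\hookrightarrow L^\infty$ with a $\lambda$-uniform constant since we are at fixed time and $H^1(\T_\lambda)\hookrightarrow L^\infty$), then applies Hölder / Plancherel. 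The main obstacle I expect is the bookkeeping in the lowest-frequency factors when $k=3$ versus $k=4$ (the convention $\xi_6^*=0$ when $k=3$) and, more substantively, making sure that in \emph{every} piece of the $\Omega$/$\Omega^c$ and Lemma~\ref{es:Mk+2}-regime decomposition the two ``spare'' frequencies onto which one loads the $H^1$ weights are genuinely the two largest (so that $m_j\langle\xi_j\rangle$ on the other factors is $O(1)$, which requires $|\xi_3^*|,\ldots\lesssim N$ — true after removing $\Omega_1$ etc.), and that the leftover multiplier is then bounded by $N^{-2+}$ rather than merely $N^{-2}\langle\xi_3^*\rangle^{0+}$ with a harmless $N^{0+}$ loss absorbed into the ``$+$''. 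None of this is deep, but it is precisely the place where the finer bounds of Lemma~\ref{es:Mk+2} (as opposed to the ``natural bound'' $m(\xi_1^*)^2|\xi_1^*|^2|\xi_3^*|$) are indispensable, since the natural bound would only yield $N^{-1}$.
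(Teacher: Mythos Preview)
You have misdiagnosed where the $N^{-2+}$ gain comes from, and as a result most of your proposal is chasing the wrong target. You correctly observe that Lemma~\ref{non-res} gives
\[
\Big|\frac{\chi_\Omega M_{k+2}+\sigma_{k+2}\alpha_{k+2}}{i\alpha_{k+2}}\Big|\lesssim 1
\]
on the support (where $|\xi_1^*|\sim|\xi_2^*|\gtrsim N$), and then you assert this is ``far too crude''. It is not. The $O(1)$ bound is exactly what the paper uses, and it is sufficient: since $m(\xi_j^*)|\xi_j^*|\ge N$ for $j=1,2$ (indeed $m(\xi)|\xi|=N^{1-s}|\xi|^s\ge N$ once $|\xi|\ge N$), writing
\[
\widehat{u_\lambda}(\xi_j)=\frac{1}{m(\xi_j)|\xi_j|}\,\widehat{|\nabla|Iu_\lambda}(\xi_j)\qquad(j=1,2)
\]
already extracts a factor $\big(m_1|\xi_1|\,m_2|\xi_2|\big)^{-1}\le N^{-2}$. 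One then places the two high-frequency factors in $L^2_x$ and the remaining $k$ factors in $L^\infty_x$ via $\||\nabla|^{-\epsilon}u_\lambda\|_{L^\infty_x}\lesssim\|u_\lambda\|_{H^{1/2}_x}\lesssim\|Iu_\lambda\|_{H^1_x}$, picking up only an $N^{k\epsilon}$ loss. That is the whole proof.

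Your displayed pointwise bound
\[
\Big|\frac{\chi_\Omega M_{k+2}+\sigma_{k+2}\alpha_{k+2}}{i\alpha_{k+2}}\Big|\lesssim \frac{m_1m_2\langle\xi_1\rangle\langle\xi_2\rangle}{N^{2-}\prod_{j=1}^{k+2}m_j\langle\xi_j\rangle}
\]
is in fact \emph{false} as stated: the right-hand side equals $N^{-2+}\big(\prod_{j\ge 3}m_j\langle\xi_j\rangle\big)^{-1}\le N^{-2+}$, whereas the left-hand side is merely $O(1)$. What you actually need (and what your final paragraph more or less describes) is the much weaker inequality $O(1)\lesssim N^{-2+}m_1m_2\langle\xi_1\rangle\langle\xi_2\rangle$, which is immediate from $m_j\langle\xi_j\rangle\ge N$.

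The case analysis through Lemma~\ref{es:Mk+2} that fills the bulk of your proposal is entirely beside the point for this lemma. Lemma~\ref{es:Mk+2} concerns bounds on $M_{k+2}$ restricted to $\Gamma_{k+2}\setminus\Omega$, and is used in the paper only for the \emph{time-integrated} resonant piece in Lemma~\ref{p+2-linear}; here the multiplier carries $\chi_\Omega$ (not $1-\chi_\Omega$), and on $\Omega$ it is Lemma~\ref{non-res}, not Lemma~\ref{es:Mk+2}, that is relevant. So the regime decomposition you set up, and the worry that ``the natural bound would only yield $N^{-1}$'', simply do not arise.
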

\begin{proof}
First, we assume that $\widehat {u_\lambda}$ is positive, otherwise one may replace it by $|\widehat{u_\lambda}|$. Second, we also assume that $|\xi_1|\ge \cdots \ge |\xi_{k+2}|$ by symmetries. Moreover, by the reduction in Remark \ref{rem:reduction}, we further assume $|\xi_1|\sim |\xi_2|\gtrsim N$.
Now by Lemma \ref{non-res}, we have
$$
|\frac{\chi_{\Omega}M_{k+2}+\sigma_{k+2}\>\alpha_{k+2}}{i\alpha_{k+2}}|\lesssim 1.
$$
Therefore, by H\"older's inequality and Sobolev's inequality, we have
\begin{align*}
&\left|\int_{\Gamma_{k+2}} e^{i\alpha_{k+2}t}\frac{\chi_{\Omega}M_{k+2}+\sigma_{k+2}\>\alpha_{k+2}}{i\alpha_{k+2}}
\widehat{f_\lambda}(t,\xi_1)\cdots\widehat{f_\lambda}(t,\xi_{k+2})\right| \\
=&
\left|\int_{\Gamma_{k+2}} \frac{\chi_{\Omega}M_{k+2}+\sigma_{k+2}\>\alpha_{k+2}}{i\alpha_{k+2}}
\widehat{u_\lambda}(t,\xi_1)\cdots\widehat{u_\lambda}(t,\xi_{k+2})\right| \\
\lesssim &
\left|\int_{\Gamma_{k+2}}
\widehat{u_\lambda}(t,\xi_1)\cdots\widehat{u_\lambda}(t,\xi_{k+2})\right| \\
=&
N^{2(s-1)}\Big|\int_{\Gamma_{k+2}}|\xi_1|^{-s+k\epsilon}|\xi_2|^{-s}
\widehat{|\nabla| Iu_\lambda}(t,\xi_1)\widehat{|\nabla| Iu_\lambda}(t,\xi_{2})\widehat{|\nabla|^{-\epsilon} u_\lambda}(t,\xi_{3})\cdots
\widehat{|\nabla|^{-\epsilon} u_\lambda}(t,\xi_{k+2})
\Big|\\
\lesssim &
N^{-2+k\epsilon}\Big|\int
{|\nabla| Iu_\lambda}(t,x){|\nabla| Iu_\lambda}(t,x)
\big({|\nabla|^{-\epsilon} Iu_\lambda}(t,x)\big)^{k}
\Big|\\
\lesssim &
N^{-2+k\epsilon}\big\|Iu_\lambda\big\|_{L^\infty_tH^1_x}^2\big\||\nabla|^{-\epsilon} u_\lambda\big\|_{L^\infty_{xt}}^k\\
\lesssim &
N^{-2+k\epsilon}\big\|Iu_\lambda\big\|_{L^\infty_tH^1_x}^{k+2},
\end{align*}
where we have used that for any $\frac12\le s \le 1$,
$$
\big\||\nabla|^{-\epsilon} u_\lambda\big\|_{L^\infty_{xt}}\lesssim
\big\| u_\lambda\big\|_{L^\infty_{t}H^\frac12_x}
\lesssim
\big\| Iu_\lambda\big\|_{L^\infty_{t}H^1_x}.
$$
This proves the lemma.
\end{proof}

Lemma \ref{lem:fixed-time} proves \eqref{fixed-time bound}. To prove \eqref{Almost conserved}, by \eqref{eqs:EI2}, we need  to estimate
\begin{align*}
\int_0^\delta\int_{\Gamma_{k+2}} e^{i\alpha_{k+2}s}\big(1-\chi_{\Omega}\big)M_{k+2}
\widehat{f_\lambda}(s,\xi_1)\cdots\widehat{f_\lambda}(s,\xi_{k+2}),
\end{align*}
and
\begin{align*}
&\int_0^\delta\int_{\Gamma_{2k+2}} e^{i\alpha_{2k+2}s}\overline{M_{2k+2}}\>
\widehat{f_\lambda}(s,\xi_1)\cdots\widehat{f_\lambda}(s,\xi_{2k+2}).
\end{align*}
These are included in the following two lemmas.

\begin{lem} \label{p+2-linear}
Let $s\geq\frac{1}{2}$, and $ \|Iu_\lambda\|_{X^1([0,\delta])}\lesssim  1$, then
\begin{equation}
 \left|\int_0^\delta\!\!\int_{\Gamma_{k+2}} e^{i\alpha_{k+2}s}\big(1-\chi_{\Omega}\big)M_{k+2}
\widehat{f_\lambda}(s,\xi_1)\cdots\widehat{f_\lambda}(s,\xi_{k+2})\right|
 \lesssim
 K,
 \label{Lambda6}
\end{equation}
where $K= N^{-3+}+N^{-2+}\lambda^{-\frac12}$.
\end{lem}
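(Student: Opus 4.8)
The plan is to bound the $(k+2)$-linear expression in \eqref{Lambda6} by combining the pointwise symbol bounds of Lemma~\ref{es:Mk+2}, the conservation bound $\|Iu_\lambda\|_{X^1([0,\delta])}\lesssim1$, the linear Strichartz estimates \eqref{XE1}--\eqref{XE5}, and the periodic bilinear Strichartz estimate \eqref{CEIN}. I would begin with the usual reductions. Extend $u_\lambda$ off $[0,\delta]$ so that $\|Iu_\lambda\|_{X^1}\lesssim1$ globally; since $f_\lambda=e^{t\partial_x^3}u_\lambda$, the oscillatory factor $e^{i\alpha_{k+2}s}$ cancels the free evolutions and the integrand becomes $(1-\chi_\Omega)M_{k+2}\prod_{j=1}^{k+2}\widehat{u_\lambda}(s,\xi_j)$, a space--time $(k+2)$-linear form in $u_\lambda$; inserting the sharp time cutoff $\chi_{[0,\delta]}(s)$ costs only the passage from $X_{0,1/2}$ to $X_{0,1/2-}$ by Lemma~\ref{lem:non-smooth}. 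By symmetry order $|\xi_1|\ge\cdots\ge|\xi_{k+2}|$; on $\Gamma_{k+2}$ the triangle inequality forces $|\xi_1|\sim|\xi_2|$, so with Remark~\ref{rem:reduction} one is reduced to $|\xi_1|\sim|\xi_2|\gtrsim N$, and being outside $\Omega_1$ gives $|\xi_3|\sim|\xi_4|$. On each factor one writes $\widehat{u_\lambda}(\xi_j)$ in terms of $\widehat{I|\nabla|u_\lambda}(\xi_j)$ (on the frequencies needing a full derivative) or $\widehat{Iu_\lambda}(\xi_j)$, so the remaining functions are frequency pieces of $I|\nabla|u_\lambda$ or $Iu_\lambda$ with $X^1$-norm (or $H^1_x$-norm, via the energy bound) $\lesssim1$, and what is left over is a symbol controlled by $M_{k+2}/\prod(\text{weights})$, to which Lemma~\ref{es:Mk+2} applies --- this is precisely the mechanism already used in the proof of Lemma~\ref{lem:fixed-time}.

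I would then split $\Gamma_{k+2}\setminus\Omega$ into the finitely many regimes dictated by the hypotheses of Lemma~\ref{es:Mk+2}(1)--(3) and the subsets $A_1,A_2$ from its proof, organized by the sizes of $|\xi_3^*|,|\xi_4^*|,|\xi_5^*|$ relative to $N$ and to one another. In each regime one distributes the derivative weights $m_j\langle\xi_j\rangle$ onto the two high frequencies (absorbed by $\|I|\nabla|u_\lambda\|_{L^2_x}\lesssim1$ or by $L^4_{xt}$), handles the low frequencies by $L^\infty_{xt}$ with a tiny negative derivative to spare ($\||\nabla|^{-\epsilon}u_\lambda\|_{L^\infty_{xt}}\lesssim\|u_\lambda\|_{L^\infty_tH^{1/2}_x}\lesssim\|Iu_\lambda\|_{L^\infty_tH^1_x}\lesssim1$, cf. Lemma~\ref{lem:XE6-12}(1)), and --- in the ``generic'' regimes --- replaces two of the remaining $L^4_{xt}$-factors by the bilinear operator $I_{N^2}$ applied to a pair with $|\xi_a^2-\xi_b^2|\gtrsim N^2$, invoking \eqref{CEIN} to gain $\lambda^{-\frac12+}$. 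Such a pair is available outside the enlarged resonant set (this is exactly why $\Omega_3$ was added and $\Omega_4$ modified), and for $k=3$ it is moreover unavoidable, since an odd number of comparable frequencies summing to zero cannot have all pairwise differences of squares $\ll N^2$. The sharpened symbol bounds of Lemma~\ref{es:Mk+2} are arranged so that after the weight distribution the residual symbol is $\lesssim N^{-2+}$ in these regimes, producing a contribution $\lesssim N^{-2+}\lambda^{-\frac12}$. In the one leftover (degenerate) configuration --- all $k+2$ frequencies $\gtrsim N$ and split into two clusters near $\pm v$, which is empty when $k=3$ --- no bilinear pair is available, but there the double cancellation forced by $\sum_j\xi_j=0$ (both in $\sum_j\xi_j^{*3}$ and in the variation of the $m_j^2$) drives the residual symbol down to $\lesssim N^{-3+}$, so plain linear Strichartz gives a contribution $\lesssim N^{-3+}$. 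Adding the finitely many regimes and summing the dyadic pieces yields \eqref{Lambda6}.

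The hard part is the sharpness of this bookkeeping, which is forced by the weakness of the periodic bilinear estimate: by Remark~\ref{rem:Bi-Str}, $C(N^2,\lambda)\sim\lambda^{-1/2}\gg N^{-1}$, so the bilinear estimate contributes only $\lambda^{-1/2}$, can be used only once per term, and only after the residual symbol has already been brought down to $N^{-2+}$. Ensuring this is possible uniformly over $\Gamma_{k+2}\setminus\Omega$ is exactly what necessitates the refined resonant sets and the finer bounds of Lemma~\ref{es:Mk+2}, and the degenerate regime --- where no bilinear gain exists --- is handled only because there the symbol is automatically smaller. A second, more routine but still delicate point is that $\|Iu_\lambda\|_{X^1}\lesssim1$ gives only $\ell^2$-summability of the dyadic pieces of $I|\nabla|u_\lambda$, so the argument is critical in the frequency sums: at most four factors may be placed in $L^4_{xt}$ (or two in the bilinear estimate plus two in $L^4_{xt}$), the remaining $k-2$ must be disposed of in $L^\infty_{xt}$, and one must check that all the $N^{0+}$- and $\lambda^{0+}$-losses (from $L^6_{xt}$, from the low-frequency $L^\infty_{xt}$ bounds, and from the time truncation) are absorbed into the ``$+$'' in the exponents so that every dyadic sum converges.
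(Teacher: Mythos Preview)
Your overall strategy---reduce to a space--time multilinear form, split $\Gamma_{k+2}\setminus\Omega$ according to how many $|\xi_j|$ are $\gtrsim N$, feed in the symbol bounds of Lemma~\ref{es:Mk+2}, and pair one high frequency with a separated frequency through the bilinear estimate \eqref{CEIN} to extract $\lambda^{-1/2}$---is exactly the paper's approach, and your remark on the odd-number obstruction is precisely the claim \eqref{7.25}.

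There is, however, one genuine omission. You propose to ``extend $u_\lambda$ off $[0,\delta]$ so that $\|Iu_\lambda\|_{X^1}\lesssim1$ globally'' and then apply \eqref{XE1}, \eqref{CEIN}, etc.\ directly to $u_\lambda$. But recall from \eqref{workingspace} that $\|Iu_\lambda\|_{X^1}=\|\mathcal G(Iu_\lambda)\|_{Y^1}$: the hypothesis controls the \emph{gauged} function in the standard Bourgain space, not $u_\lambda$ itself, and the Strichartz/bilinear estimates \eqref{XE1}--\eqref{XE3}, \eqref{CEIN} are stated for $X_{s,b}$, not for $X^s$. The paper bridges this with a specific identity: since $\widehat{\mathcal Gu}(t,\xi)=e^{i\xi\theta(t)}\widehat u(t,\xi)$ for a real phase $\theta(t)$, on the hyperplane $\xi_1+\cdots+\xi_{k+2}=0$ one has
\[
\widehat{u_\lambda}(t,\xi_1)\cdots\widehat{u_\lambda}(t,\xi_{k+2})
=\widehat{v_\lambda}(t,\xi_1)\cdots\widehat{v_\lambda}(t,\xi_{k+2}),
\qquad v_\lambda:=\mathcal G u_\lambda,
\]
so the integrand can be rewritten entirely in terms of $v_\lambda$, after which $\|Iv_\lambda\|_{Y^1}\lesssim1$ is exactly the hypothesis and all the $X_{s,b}$-based estimates apply. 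This step is singled out in \S1.1.1 as the reason one ``shall be careful in the usage of the Bourgain norm''; without it your argument does not connect the hypothesis to the estimates you invoke.

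A minor correction on the bookkeeping: in the paper's proof of \emph{this} lemma every region yields $N^{-2+}\lambda^{-1/2}$; the bilinear pair is always available (for $k=3$ by \eqref{7.25}, and the paper asserts the $k=4$ case $A_5$ is handled analogously). The $N^{-3+}$ contribution to $K$ arises only in Lemma~\ref{2p+2-linear} (region $B_3$), not here, so your ``degenerate configuration'' giving $N^{-3+}$ is not part of the actual argument for \eqref{Lambda6}. Your caution about the $k=4$ configuration with three frequencies near $v$ and three near $-v$ is reasonable, but note that in that regime $M_{k+2}$ itself is anomalously small (the leading and subleading terms in $\sum m_j^2\xi_j^3$ both cancel), which is what one would exploit rather than settling for $N^{-3+}$.
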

\begin{proof}
Before estimation, we give several reductions.
First, let $v=\mathcal G u$, then
$$
\hat v(t,\xi)=e^{2\pi i \xi\int_0^t \!\!\int_\T u^k\,dxds} \hat u(t,\xi).
$$
So for $\xi_1+\cdots+\xi_{k+2}=0$,
$$
\hat u(t,\xi_1)\cdots \hat u(t,\xi_{k+2})=\hat v(t,\xi_1)\cdots \hat v(t,\xi_{k+2}).
$$
After rescaling, this gives that
$$
\hat u_\lambda(t,\xi_1)\cdots \hat u_\lambda(t,\xi_{k+2})=\hat v_\lambda(t,\xi_1)\cdots \hat v_\lambda(t,\xi_{k+2}).
$$
Thus,
\begin{align*}
 &\int_0^\delta\!\!\int_{\Gamma_{k+2}} e^{i\alpha_{k+2}s}\big(1-\chi_{\Omega}\big)M_{k+2}
\widehat{f_\lambda}(s,\xi_1)\cdots\widehat{f_\lambda}(s,\xi_{k+2})\\
 =&
 \int_0^\delta\!\!\int_{\Gamma_{k+2}} \big(1-\chi_{\Omega}\big)M_{k+2}
\widehat{u_\lambda}(s,\xi_1)\cdots\widehat{u_\lambda}(s,\xi_{k+2})\\
 =&
 \int_0^\delta\!\!\int_{\Gamma_{k+2}} \big(1-\chi_{\Omega}\big)M_{k+2}
\widehat{v_\lambda}(s,\xi_1)\cdots\widehat{v_\lambda}(s,\xi_{k+2}).
\end{align*}

Second, to extend the integration domain from $[0,\delta]$ to $\R$, we insert the non-smooth cutoff function $\chi_{[0,\delta]}(t)$ into one of $v_\lambda$ and use the estimate in Lemma \ref{lem:non-smooth}. This allows us to turn to show
$$
 \left|\int_\R \!\int_{\Gamma_{k+2}} \big(1-\chi_{\Omega}\big)M_{k+2}
\widehat{v_\lambda}(s,\xi_1)\cdots\widehat{v_\lambda}(s,\xi_{k+2})\right|
 \lesssim
 K\>
 \|Iv_\lambda\|_{X_{1,\frac{1}{2}-}}\|Iv_\lambda\|_{Y^1}^{k+1},
$$
where $v_\lambda$ is time supported on $[0,\delta]$.
But the $0+$ loss is not essential and will be recorded by $N^{0+}$, thus it will not be mentioned.
Then by Plancherel's identity, it turns to show
\begin{equation}\label{3.11}
 \left|\iint_{\Gamma_{k+2}\times \Gamma_{k+2}} \big(1-\chi_{\Omega}\big)M_{k+2}
\widehat{v_\lambda}(\tau_1,\xi_1)\cdots\widehat{v_\lambda}(\tau_{k+2},\xi_{k+2})\right|
\lesssim K\>\|Iv_\lambda\|_{Y^1}^{k+2},
\end{equation}
where the set $\Gamma_{k+2}\times
\Gamma_{k+2}=\{( \xi,\tau): \xi_1+\cdots+ \xi_{k+2}=0,
\tau_1+\cdots+\tau_{k+2}=0\}$ and we write $ \xi=( \xi_1,\cdots, \xi_{k+2})$,
$\tau=(\tau_1,\cdots,\tau_{k+2})$ for short.

Third,  by symmetry we may assume that
\begin{equation*}
\aligned \big| \xi_{1}\big|\ge |\xi_2|\ge \cdots \geq
\big| \xi_{{k+2}} \big|.
\endaligned
\end{equation*}
Also, by dyadic decomposition, we may write
$$
|\xi_j|\sim N_j, \quad \mbox{ for } j=1,\cdots, k+2.
$$
According to the reduction in Remark \ref{rem:reduction}, we further assume $|N_1|\sim |N_2|\gtrsim N$.
The last reduction is that after replacing $\widehat{v_\lambda}(\tau,\xi)$ by $|\widehat{v_\lambda}(\tau,\xi)|$ if necessary, we assume that $\widehat{v_\lambda}(\tau,\xi)$ is positive.

Now we divide it into four
regions:
\begin{equation*}\aligned
  A_1 =& \{( \xi,\tau)\in (\Gamma_{k+2}\backslash\Omega)\times \Gamma_{k+2}:
  | \xi_2|\gtrsim N\gg | \xi_3|\}; \\
  A_2 =& \{( \xi,\tau)\in (\Gamma_{k+2}\backslash\Omega)\times \Gamma_{k+2}:
  | \xi_3|\gtrsim N\gg | \xi_4|\}; \\
  A_3 =& \{( \xi,\tau)\in (\Gamma_{k+2}\backslash\Omega)\times \Gamma_{k+2}:
  | \xi_4|\gtrsim N\gg | \xi_5|\}; \\
  A_4 =& \{( \xi,\tau)\in (\Gamma_{k+2}\backslash\Omega)\times \Gamma_{k+2}:
  | \xi_5|\gtrsim N\}.
\endaligned\end{equation*}

\noindent\textbf{Estimate in $A_1$.\quad} By Lemma \ref{es:Mk+2} (2), we have
$$
\big|M_{k+2}\big|\lesssim |\xi_3||\xi_4||\xi_5|.
$$
Therefore, by Lemma \ref{lem:XE6-12}  and Corollary \ref{cor:2.2}, we have
\begin{align*}
LHS \mbox{ of } (\ref{3.11})
\lesssim&
\iint_{A_1} |\xi_3||\xi_4||\xi_5|
\widehat{v_\lambda}(\tau_1,\xi_1)\cdots\widehat{v_\lambda}(\tau_{k+2},\xi_{k+2})\\
\lesssim&
N^{-2s+}\iint_{A_1} |\xi_1|^s|\xi_2|^{s-}|\xi_3||\xi_4|^{1-}|\xi_5|^{1-}
\widehat{v_\lambda}(\tau_1,\xi_1)\cdots\widehat{v_\lambda}(\tau_{k+2},\xi_{k+2})\\
\lesssim&
N^{-2+}\iint
\big(|\nabla|P_{N_1}Iv_\lambda\big)(t,x)\big(|\nabla|^{1-}P_{N_2}Iv_\lambda\big)(t,x)\big(|\nabla|P_{N_3}Iv_\lambda\big)(t,x)\\
&\qquad\cdot\big(|\nabla|^{1-}P_{\ll N }Iv_\lambda\big)^{2}(t,x)\big(P_{\ll N }v_\lambda\big)^{k-3}(t,x)\,dxdt \\
\lesssim&
N^{-2+}
\big\|\eta(t)^2I_{N^2}\big(|\nabla|P_{N_1}Iv_\lambda, |\nabla|P_{N_3}Iv_\lambda\big)\big\|_{L^2_{xt}}\big\||\nabla|^{1-}Iv_\lambda\big\|_{L^6_{xt}}^3\big\|v_\lambda\big\|_{L^\infty_{xt}}^{k-3} \\
\lesssim& \;
N^{-2+}\lambda^{-\frac12}.
\end{align*}

\noindent\textbf{Estimate in $A_2$.\quad} By the definition of $\Omega_1$, $A_2=\emptyset$.

\noindent\textbf{Estimate in $A_3$.\quad} We split it into two parts again, and define
\begin{equation*}\aligned
  A_{31} =& \{( \xi,\tau)\in A_3:
  | \xi_1|\sim | \xi_2|\sim| \xi_3|\sim| \xi_4|\}; \\
 A_{32} =& \{( \xi,\tau)\in A_3:
  | \xi_1|\sim | \xi_2|\gg| \xi_3|\sim| \xi_4|\}; \\
\endaligned\end{equation*}

\noindent{Estimate in $A_{31}$.\quad}
By Lemma \ref{es:Mk+2} (3),  we have
$$
|\chi_{A_{31}}\,{M}_{k+2}|\lesssim m( \xi_1 )^2| \xi_1 |^{2}|\xi_5 |\sim m( \xi_1 )m( \xi_2)| \xi_1 ||\xi_2|^{1-}|\xi_3|^{0+}|\xi_5|.
$$
Therefore, by Lemma \ref{lem:XE6-12} and Corollary \ref{cor:2.2},
\begin{equation*}\aligned
LHS \mbox{ of } (\ref{3.11})
\lesssim&
\iint_{A_3} m( \xi_1 )m( \xi_2)| \xi_1 ||\xi_2|^{1-}|\xi_3|^{0+}|\xi_5|
\widehat{v_\lambda}(\tau_1,\xi_1)\cdots\widehat{v_\lambda}(\tau_{k+2},\xi_{k+2})\\
\lesssim&
\iint
\big(|\nabla|P_{N_1}Iv_\lambda\big)(t,x)\big(|\nabla|^{1-}P_{N_2}Iv_\lambda\big)(t,x)\big(|\nabla|^{0+}P_{N_3}Iv_\lambda\big)(t,x)\\
&\qquad\qquad\cdot
 \big(P_{N_4}v_\lambda\big)(t,x)\big(|\nabla|P_{N_5}Iv_\lambda\big)(t,x)\big(P_{\ll N }v_\lambda\big)^{k-3}(t,x)\,dxdt \\
\lesssim&
\big\|\eta(t)^2I_{N^2}\big(|\nabla|P_{N_1}Iv_\lambda, |\nabla|P_{N_5}Iv_\lambda\big)\big\|_{L^2_{xt}}\big\||\nabla|^{1-}P_{N_2}Iv_\lambda\big\|_{L^{6}_{xt}}\\
&\qquad\qquad\cdot
\big\||\nabla|^{0+}P_{N_3}v_\lambda\big\|_{L^{6}_{xt}}\big\|P_{N_4}v_\lambda\big\|_{L^{6}_{xt}}\big\|P_{\ll N}v_\lambda\big\|_{L^\infty_{xt}}^{k-3} \\
\lesssim&
N^{-2+}
\big\|\eta(t)^2I_{N^2}\big(|\nabla|P_{N_1}Iv_\lambda, |\nabla|P_{N_5}Iv_\lambda\big)\big\|_{L^2_{xt}}
\big\||\nabla|^{1-}Iv_\lambda\big\|_{L^{6}_{xt}}^3\big\|v_\lambda\big\|_{L^\infty_{xt}}^{k-3} \\
\lesssim& \;
N^{-2+}\lambda^{-\frac12}.
\endaligned\end{equation*}

\noindent{Estimate in $A_{32}$.\quad}
Note that both the estimates in Lemma \ref{es:Mk+2} (1) and (3) hold in $A_{32}$, so for any $\epsilon>0$,
\begin{align*}
|\chi_{A_{32}}\>{M}_{k+2}|\lesssim &
\big[m( \xi_1 )^2| \xi_1 |^{2}|\xi_5 |\big]^{1-\epsilon}\big[m( \xi_1 )^2| \xi_1 ||\xi_3|^{2}\big]^\epsilon\\
= &
m( \xi_1 )^2| \xi_1 |^{2-\epsilon}|\xi_3|^{2\epsilon}|\xi_5|^{1-\epsilon}\\
\lesssim &
m( \xi_1 )^2| \xi_1 ||\xi_2|^{1-\epsilon}|\xi_3|^{\epsilon}|\xi_4|^{\epsilon}\langle \xi_5\rangle.
\end{align*}
Therefore, by Lemma \ref{lem:XE6-12} and Corollary \ref{cor:2.2},
\begin{equation*}\aligned
LHS \mbox{ of } (\ref{3.11})
\lesssim&
\iint_{A_3} m( \xi_1 )^2| \xi_1 ||\xi_2|^{1-\epsilon}|\xi_3|^{\epsilon}|\xi_4|^{\epsilon}\langle \xi_5\rangle
\widehat{v_\lambda}(\tau_1,\xi_1)\cdots\widehat{v_\lambda}(\tau_{k+2},\xi_{k+2})\\
\lesssim&
\iint
\big(|\nabla|P_{N_1}Iv_\lambda\big)(t,x)\big(|\nabla|^{1-\epsilon}P_{N_2}Iv_\lambda\big)(t,x)\big(|\nabla|^\epsilon P_{N_3}v_\lambda\big)^2(t,x)\\
&\qquad\qquad\cdot
 \big(\langle\nabla\rangle P_{N_5}Iv_\lambda\big)(t,x)\big(P_{\ll N }v_\lambda\big)^{k-3}(t,x)\,dxdt \\
\lesssim&
\big\|\eta(t)^2I_{N^2}\big(|\nabla|P_{N_1}Iv_\lambda, \langle\nabla\rangle P_{N_5}Iv_\lambda\big)\big\|_{L^2_{xt}}\big\||\nabla|^{1-}P_{N_2}Iv_\lambda\big\|_{L^{6}_{xt}}\\
&\qquad\qquad\cdot
\big\||\nabla|^{0+}P_{N_3}v_\lambda\big\|_{L^{6}_{xt}}^2\big\|P_{\ll N}v_\lambda\big\|_{L^\infty_{xt}}^{k-3} \\
\lesssim&
N^{-2+}
\big\|\eta(t)^2I_{N^2}\big(|\nabla|P_{N_1}Iv_\lambda, \langle\nabla\rangle P_{N_5}Iv_\lambda\big)\big\|_{L^2_{xt}}
\big\||\nabla|^{1-}Iv_\lambda\big\|_{L^{6}_{xt}}^3\big\|v_\lambda\big\|_{L^\infty_{xt}}^{k-3} \\
\lesssim& \;
N^{-2+}\lambda^{-\frac12}.
\endaligned\end{equation*}

\noindent\textbf{Estimate in $A_4$.\quad}
Moreover, we split $A_4$ into two subregions:
\begin{equation*}\aligned
  A_{41} =& \{( \xi,\tau)\in A_5:
  | \xi_4|\gg | \xi_5|\}; \\
  A_{42} =& \{( \xi,\tau) \in A_5:
  | \xi_4|\sim | \xi_5|\}.
\endaligned\end{equation*}
The estimate in $A_{41}$ can be treated as the estimate in $A_{3}$, since they have the same bound on $M_{k+2}$. So we omit the details. Now we consider the estimate in $A_{42}$. In this part, by Lemma \ref{es:Mk+2} (1) and the relationship $|\xi_3|\sim |\xi_4|\sim |\xi_5|$, we have
\begin{align*}
|M_{k+2}|\lesssim m(\xi_1)^2|\xi_1||\xi_3|^2
\lesssim &
m(\xi_1)|\xi_1|\cdot m(\xi_2)|\xi_2|^{\frac12+3\epsilon}|\xi_3|^{\frac12-\epsilon}|\xi_4|^{\frac12-\epsilon}|\xi_5|^{\frac12-\epsilon}\\
\lesssim &
N^{-\frac32-}m(\xi_1)|\xi_1|\cdot m(\xi_2)|\xi_2|^{\frac12+}\cdot m(\xi_3)|\xi_3|^{1-}\cdot m(\xi_4)|\xi_4|^{1-}\cdot m(\xi_5)|\xi_5|\\
\lesssim &
N^{-2+}m(\xi_1)|\xi_1|\cdot m(\xi_2)|\xi_2|^{1-}\cdot m(\xi_3)|\xi_3|^{1-}\cdot m(\xi_4)|\xi_4|^{1-}\cdot m(\xi_5)|\xi_5|.
\end{align*}
Further, we claim that
\begin{equation}\label{7.25}
| \xi_1|-| \xi_5|\gtrsim | \xi_1|\gtrsim N.
\end{equation}
Indeed, if $| \xi_j|=| \xi_1|+o(| \xi_1|)$, for all $j=1,\cdots, 5$, then there exist $\mu_j\in \{-1,1\}$ such that
$$
\xi_j= \mu_j \xi_1+o(| \xi_1|).
$$
Therefore,
$$
|\xi_6|=\big|\xi_1+\cdots+ \xi_5\big|=|\mu_1+\cdots+\mu_5||\xi_1|+o(|\xi_1|).
$$
Note that $|\mu_1+\cdots+\mu_5|\ge 1$, we have $|\xi_6|\sim |\xi_1|$, but this is not the case in $A_4$. So we have $| \xi_1|-| \xi_5|\gtrsim | \xi_1|$ and thus
$$
\big|\xi_1^2-\xi_5^2\big|\gtrsim N^2.
$$
Therefore,  we have
\begin{equation*}\aligned
LHS \mbox{ of } (\ref{3.11})
\lesssim&
N^{-2+}\iint_{A_4} m(\xi_1)|\xi_1|\cdot m(\xi_2)|\xi_2|^{1-}\cdot m(\xi_3)|\xi_3|^{1-}\cdot m(\xi_4)|\xi_4|^{1-}\cdot m(\xi_5)|\xi_5|\\
&\qquad \qquad\cdot
\widehat{v_\lambda}(\tau_1,\xi_1)\cdots\widehat{v_\lambda}(\tau_{k+2},\xi_{k+2})\\
\lesssim&
N^{-2+}\iint \eta^2(t)
I_{N^2}\big(|\nabla|Iv_\lambda, |\nabla|Iv_\lambda\big)(t,x)\\
&\qquad \qquad\cdot
\big(|\nabla|^{1-}P_{\gtrsim N}Iv_\lambda\big)^3(t,x)
\big(P_{\ll N }v_\lambda\big)^{k-3}(t,x)\,dxdt  \\
\lesssim&
N^{-2+}\big\|\eta(t)^2I_{N^2}\big(|\nabla|Iv_\lambda, |\nabla|Iv_\lambda\big)\big\|_{L^2_{xt}}
\big\||\nabla|^{1-}Iv_\lambda\big\|_{L^{6}_{xt}}^3\big\|v_\lambda\big\|_{L^\infty_{xt}}^{k-3} \\
\lesssim& \;
N^{-2+}\lambda^{-\frac12}.
\endaligned\end{equation*}

\noindent\textbf{Estimate in $A_5$ (if $k=4$).\quad} It also can be treated as the estimate in $A_4$, and thus we obtain the same conclusion as what in $A_4$.
\end{proof}

\begin{lem} \label{2p+2-linear}
Let $s\geq\frac{1}{2}$, and $ \|Iu\|_{X^1([0,\delta])}\lesssim  1$, then
\begin{equation}
 \left|\int_0^t\!\!\int_{\Gamma_{2k+2}}\!\! e^{i\alpha_{2k+2}s}\overline{M_{2k+2}}\>
\widehat{f_\lambda}(s,\xi_1)\cdots\widehat{f_\lambda}(s,\xi_{2k+2})\right|
 \lesssim
 K'\>
 \|Iu_\lambda\|_{Y^1}^{2k+2}.
 \label{Lambda6}
\end{equation}
where $K'=N^{-3+}+N^{-2+}\lambda^{-1}$.
\end{lem}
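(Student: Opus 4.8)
The plan is to follow the scheme of the proof of Lemma \ref{p+2-linear} almost verbatim; the $(2k+2)$-linear correction term is the \emph{easier} of the two, since the multiplier $\overline{M_{2k+2}}$ carries at most one derivative and there are $2k+2$ copies of $Iv_\lambda$ over which to spread it and to which to apply Strichartz. First I would make the same reductions: replace $u_\lambda$ by $v_\lambda=\mathcal G u_\lambda$ (using $\prod_j\widehat{u_\lambda}(\xi_j)=\prod_j\widehat{v_\lambda}(\xi_j)$ on $\Gamma_{2k+2}$), insert the cutoff $\chi_{[0,\delta]}$ via Lemma \ref{lem:non-smooth} so as to integrate over all of $\mathbb{R}$, apply Plancherel to pass to an integral over $\Gamma_{2k+2}\times\Gamma_{2k+2}$, order the spatial frequencies $|\xi_1|\ge\cdots\ge|\xi_{2k+2}|$, localize dyadically $|\xi_j|\sim N_j$, reduce to $N_1\gtrsim N$ by Remark \ref{rem:reduction}, and replace $\widehat{v_\lambda}$ by its absolute value. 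Writing each Fourier factor as $\widehat{v_\lambda}(\xi_j)=\big(m(\xi_j)\langle\xi_j\rangle\big)^{-1}\widehat{\langle\nabla\rangle Iv_\lambda}(\xi_j)$ — retaining where needed a sliver $\langle\xi_j\rangle^{0+}$ of derivative so that the factors can afterwards be placed in $L^6_{xt}$ — the problem reduces to bounding the spacetime integral of a product of $2k+2$ functions of type $\langle\nabla\rangle^{1-}Iv_\lambda$ against the net multiplier $\overline{M_{2k+2}}\big/\prod_j m(\xi_j)\langle\xi_j\rangle$, the target being $K'\,\|Iv_\lambda\|_{Y^1}^{2k+2}$.

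Next I would split into regions according to where the ordered frequencies first drop below $N$, exactly the regions $A_1,A_2,\dots$ appearing in the proof of Lemma \ref{p+2-linear}, noting that $\xi_1+\cdots+\xi_{2k+2}=0$ with $|\xi_1|$ maximal forces $|\xi_1|\sim|\xi_2|\gtrsim N$. In each region $\overline{M_{2k+2}}$ is estimated by the relevant bound of Lemma \ref{es:M2k+2}: the refined $|\overline{M_{2k+2}}|\lesssim|\xi_3^*|$ when $|\xi_1^*|\sim|\xi_2^*|\gtrsim N\gg|\xi_3^*|\sim|\xi_4^*|$, and otherwise the crude $|\overline{M_{2k+2}}|\lesssim|\xi_1^*|$ (sharpened to $|\overline{M_{2k+2}}|\lesssim m(\xi_1^*)^2|\xi_1^*|$ when a single frequency dominates, using $|\tilde\sigma_{k+2}|\lesssim m(\xi_1^*)^2$ on $\Omega_1$). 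Since every frequency with $|\xi_j^*|\gtrsim N$ contributes $m(\xi_j^*)\langle\xi_j^*\rangle\gtrsim N$ to the denominator — and the two leading such factors together contribute $\gtrsim N\,|\xi_1^*|$ — the net multiplier is $\lesssim N^{-2+}$ in the principal regions (those where the refined bound applies or where at least three frequencies exceed $N$) and $\lesssim N^{-3+}$ once four frequencies exceed $N$.

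The decisive point — and the reason the exponent $\lambda^{-1}$ in $K'$ improves on the $\lambda^{-1/2}$ of Lemma \ref{p+2-linear} — is that Corollary \ref{cor:2.2} can be applied \emph{twice}. With $|\xi_1|\sim|\xi_2|\gtrsim N$ and $2k-1\ge5$ further frequencies available, one can always choose two disjoint pairs $(\xi_a,\xi_b)$, $(\xi_c,\xi_d)$ with $|\xi_a^2-\xi_b^2|\gtrsim N^2$ and $|\xi_c^2-\xi_d^2|\gtrsim N^2$: each of $\xi_1,\xi_2$ may be paired with a frequency $\ll N$, and whenever several frequencies are comparable to $|\xi_1|$ one uses the argument behind \eqref{7.25} (the zero-sum constraint forbids too many near-equal large frequencies) to pair two large frequencies directly. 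Applying Corollary \ref{cor:2.2} to $I_{N^2}$ of each pair yields $\lambda^{-1/2}\cdot\lambda^{-1/2}=\lambda^{-1}$; the remaining $\le2k-2$ factors go into $L^6_{xt}$ and $L^\infty_{xt}$ by Lemma \ref{lem:XE6-12}, and the H\"older exponents close in $L^1_{xt}$ since $\tfrac12+\tfrac12+(\text{a bounded number})\cdot\tfrac16\le1$, with room to spare for $k=3$. Summing the regions yields $\big(N^{-3+}+N^{-2+}\lambda^{-1}\big)\,\|Iv_\lambda\|_{Y^1}^{2k+2}=K'\,\|Iv_\lambda\|_{Y^1}^{2k+2}$.

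I expect the real difficulty to be concentrated in the borderline regions — those in which only two or three frequencies exceed $N$ while $|\xi_3^*|\gg|\xi_4^*|$, so that neither the refined bound of Lemma \ref{es:M2k+2}(2) nor the $N^{-3+}$ gain is at hand. There one must simultaneously ensure that both bilinear estimates are actually extracted and that every derivative-loaded high-frequency factor is absorbed \emph{inside} one of the two $I_{N^2}$'s (placing such a factor in an $L^6_{xt}$ norm would overspend the $L^1_{xt}$ budget); this dictates the choice of pairs — e.g. pairing $\xi_3^*$ against $\xi_1^*$ or $\xi_2^*$ when $|\xi_1^*|\gg|\xi_3^*|$ — and requires playing off the two bounds of Lemma \ref{es:M2k+2}, the support restriction carried by $\tilde\sigma_{k+2}=-\chi_\Omega M_{k+2}/\alpha_{k+2}$, and the quantitative separation $|\xi_1^*|-|\xi_5^*|\gtrsim|\xi_1^*|$ from \eqref{7.25}. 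This is exactly the counterpart of the sub-regions $A_{31},A_{32},A_{42}$ of Lemma \ref{p+2-linear} and of Lemma 4.4 in \cite{Miao-Shao-Wu-Xu:2009:gKdV}; modulo that bookkeeping, the estimate is strictly easier than Lemma \ref{p+2-linear}, which is why $K'\le K$.
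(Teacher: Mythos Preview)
Your approach matches the paper's: same reductions, same decomposition by how many frequencies exceed $N$, the bounds of Lemma~\ref{es:M2k+2}, and the double application of Corollary~\ref{cor:2.2} to gain $\lambda^{-1}$. Two small notes: the H\"older budget after two bilinear $L^2$ factors forces \emph{all} remaining $2k-2$ factors into $L^\infty_{xt}$ (your inequality $\tfrac12+\tfrac12+\tfrac{n}{6}\le1$ admits only $n=0$, so there is no ``room to spare''); and the paper's case analysis is simpler than you anticipate---only three regions $B_1,B_2,B_3$ (according to whether $|\xi_3^*|\ll N$, $|\xi_3^*|\gtrsim N\gg|\xi_4^*|$, or $|\xi_4^*|\gtrsim N$), with the borderline cases handled directly and, in $B_3$, four $L^4_{xt}$ norms via \eqref{XE1} already yielding $N^{-3+}$ without any bilinear input.
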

\begin{proof}
By the reductions at the beginning of the proof of
Proposition \ref{p+2-linear}, it suffices to show
\begin{equation}\label{3.11}
 \left|\iint_{\Gamma_{2k+2}\times \Gamma_{2k+2}} \overline {M_{2k+2}}
\widehat{v_\lambda}(\tau_1,\xi_1)\cdots\widehat{v_\lambda}(\tau_{2k+2},\xi_{2k+2})\right|
\lesssim K'\>\|Iv_\lambda\|_{Y^1}^{2k+2},
\end{equation}
where the hyperplane $\Gamma_{2k+2}^2
=\{( \xi_1,\cdots, \xi_{2k+2},\tau_1,\cdots,\tau_{2k+2}): \xi_1+\cdots+ \xi_{2k+2}=0,
\tau_1+\cdots+\tau_{2k+2}=0\}$.
Also, we may assume that
\begin{equation*}
\aligned | \xi_{1}|\ge |\xi_2|\ge \cdots \geq
| \xi_{{2k+2}} |,\quad  |\xi_j|\sim N_j, \mbox{ for } j=1,\cdots, 2k+2,
\endaligned
\end{equation*}
and  $\widehat{v_\lambda}(\tau,\xi)$ is positive.

Now we consider the following three subregions separately:
\begin{equation*}\aligned
  B_1 =& \{( \xi_1,\cdots, \xi_{2k+2},\tau_1,\cdots,\tau_{2k+2})\in \Gamma_{2k+2}^2:
  | \xi_1|\sim| \xi_2|\gtrsim N\gg | \xi_3|\}; \\
  B_2 =& \{( \xi_1,\cdots, \xi_{2k+2},\tau_1,\cdots,\tau_{2k+2})\in \Gamma_{2k+2}^2:
  | \xi_1|\sim| \xi_2|\geq | \xi_3|\gtrsim N \gg |\xi_4|\};\\
  B_3 =& \{( \xi_1,\cdots, \xi_{2k+2},\tau_1,\cdots,\tau_{2k+2})\in \Gamma_{2k+2}^2:
  | \xi_4|\gtrsim N\}.
\endaligned\end{equation*}

\noindent\textbf{Estimate in $B_1$.\quad}
By Lemma \ref{es:M2k+2} (2), we have $|\overline{M_{2k+2}}|\lesssim|\xi_3|$. Then, by
(\ref{CEIN}), we have
\begin{equation*}\aligned
\mbox{LHS of (\ref{3.11})}\lesssim &\; \int_{B_1} |\xi_3|
\widehat{v_\lambda}(\tau_1,\xi_1)\cdots\widehat{v_\lambda}(\tau_{2k+2},\xi_{2k+2})\\
\lesssim &\; N^{-2}\int_{B_1} m(\xi_1)|\xi_1|\cdot m(\xi_2)|\xi_2|\cdot m(\xi_3)|\xi_3|\cdot m(\xi_4)\langle\xi_4\rangle
\widehat{v_\lambda}(\tau_1,\xi_1)\cdots\widehat{v_\lambda}(\tau_{2k+2},\xi_{2k+2})\\
\lesssim&\; N^{-2}\big\|\eta(t)^2I_{N^2}\big(|\nabla|P_{N_1}Iv_\lambda, |\nabla| P_{N_3}Iv_\lambda\big)\big\|_{L^2_{xt}}\\
&\quad\cdot
\big\|\eta(t)^2I_{N^2}\big(|\nabla|P_{N_2}Iv_\lambda, \langle\nabla\rangle P_{N_4}Iv_\lambda\big)\big\|_{L^2_{xt}}\big\|v_\lambda\big\|_{L^\infty_{xt}}^{2k-2}\\
\lesssim&\; N^{-2+}\lambda^{-1}. \endaligned\end{equation*}

\noindent\textbf{Estimate in $B_2$.\quad} Similar to the proof of \eqref{7.25}, we have
\begin{equation}\label{7.26}
| \xi_1|-| \xi_3|\gtrsim | \xi_1|\gtrsim N.
\end{equation}
Moreover, from \eqref{M2k+2-1}, we have $|M_{2k+2}|\lesssim |\xi_1|$. Then
\begin{align*}
|M_{2k+2}|\lesssim  |\xi_1|
\lesssim 
N^{-2}m(\xi_1)|\xi_1|\cdot m(\xi_2)|\xi_2|\cdot m(\xi_3)|\xi_3|\cdot m(\xi_4)\langle\xi_4\rangle.
\end{align*}
Therefore, we have the same estimate as what in $B_1$, and get also
\begin{equation*}\aligned
\mbox{LHS of (\ref{3.11})}
\lesssim N^{-2+}\lambda^{-1}.
\endaligned\end{equation*}

\noindent\textbf{Estimate in $B_3$.\quad} In this part,
\begin{align*}
|M_{2k+2}|\lesssim  |\xi_1|
\lesssim 
N^{-3+s}N_4^{-s}m(\xi_1)|\xi_1|\cdot m(\xi_2)|\xi_2|\cdot m(\xi_3)|\xi_3|\cdot m(\xi_4)|\xi_4|.
\end{align*}
Therefore,  we have
\begin{equation*}\aligned
\mbox{LHS of (\ref{3.11})}
\lesssim &\; N^{-3+s}N_4^{-s}\int_{B_3} m(\xi_1)|\xi_1|\cdot m(\xi_2)|\xi_2|\cdot m(\xi_3)|\xi_3|\cdot m(\xi_4)|\xi_4|\\
&\qquad \qquad\cdot
\widehat{v_\lambda}(\tau_1,\xi_1)\cdots\widehat{v_\lambda}(\tau_{2k+2},\xi_{2k+2})\\
\lesssim&\; N^{-3+s}N_4^{-s}\big\||\nabla|P_{N_1}Iv_\lambda\big\|_{L^4_{xt}}
\cdots
\big\||\nabla|P_{N_4}Iv_\lambda\big\|_{L^4_{xt}}
\|v_\lambda\|_{L^\infty_{xt}}^{2k-2}\\
\lesssim&\; N^{-3+s}N_4^{-s+}\\
\lesssim&\;  N^{-3+}.
\endaligned\end{equation*}
This gives the proof of the lemma.
\end{proof}

Since $K'\le K$, combining with the results on Lemma \ref{lem:fixed-time}--Lemma \ref{2p+2-linear}, we prove Proposition \ref{thm:main-2}.

\vspace{0.3cm}
\section{Proposition \ref{thm:main-2} implies Theorem \ref{thm:main}}

Suppose that
\begin{align}\label{2.288}
\sup\limits_{t\in [0,(j-1)\delta]}E(Iu_\lambda(t))\le 2E(I\phi_\lambda)\quad \mbox{for some } j\in \mathbb N.
\end{align}
Then by local theory in Lemma \ref{lem:modified-local}, we have
\begin{equation*}
    \|Iu_\lambda\|_{X^1([(i-1)\delta,i\delta])}
    \lesssim
    1, \quad \mbox{for any } 1\le  i \le j.
\end{equation*}
So by \eqref{Almost conserved} in Proposition \ref{thm:main-2} and a simple iteration, we have
\begin{align}\label{2.16}
\left|E^2_I(t)-E^2_I(0)\right|&\le j K\notag\\
&\le  C_0:=\frac12E(I\phi_\lambda)
 \end{align}
for any $ t\le j\delta, j\le C_0 K^{-1}$. By \eqref{fixed-time bound}, we have
$$
\big|E(Iu(t))-E_I^2(t)\big|\lesssim N^{-2+}\|Iu(t)\|^{k+2}_{H^1_x}\lesssim N^{-2+}E(Iu(t))^\frac{k+2}2.
$$
This combining with \eqref{2.16},  gives us that for any  $ t\le j\delta$, $j$ satisfying \eqref{2.288} and $\le C_0 K^{-1}$,
\begin{align*}
E(Iu(t))&\le E_I^2(t)+CN^{-2+}E(Iu(t))^\frac{k+2}2\\
&\le
E_I^2(0)+\frac12E(I\phi_\lambda)+CN^{-2+}E(Iu(t))^\frac{k+2}2\\
&\le
\frac32E(I\phi_\lambda)+CN^{-2+}\big(E(Iu(t))^\frac{k+2}2+E(I\phi_\lambda)^\frac{k+2}2\big).
\end{align*}
So by continuity argument, we have for any $ t\le j\delta$,
$$
E(Iu(t))\le 2E(I\phi_\lambda).
$$
This extends \eqref{2.288} to $[0,j\delta]$. Thus by finite induction, we obtain that
$$
\sup\limits_{t\in [0,C_0\delta K^{-1}]}E(Iu_\lambda(t))\le 2E(I\phi_\lambda).
$$
This proves that $u_\lambda$ exists on $[0,C_0\delta K^{-1}]$, which implies that $u$ exists on $[0,C_0\delta \lambda^{-3} K^{-1}]$. Suppose that
$$
\delta\lambda^{-3} K^{-1}\ge N^{0+},
$$
then $u$ exists for arbitrary time by choosing large $N$.

Since
$$
\delta\sim \lambda^{0+},\quad \lambda\sim N^{\frac{1-s}{\frac{2}{k}+s-\frac{1}{2}}}, \quad
K= N^{-3+}+N^{-2+}\lambda^{-\frac12},
$$
we have $\delta\lambda^{-3} K^{-1}\ge N^{0+}$ as long as
\begin{eqnarray*}
2>\frac52\cdot\frac{1-s}{\frac{2}{k}+s-\frac{1}{2}};\quad
3>\frac{3(1-s)}{\frac{2}{k}+s-\frac{1}{2}}.
\end{eqnarray*}
Particularly, when $k=3$, it holds for any $s\ge \frac12$; when $k=4$,
it holds for any $s> \frac59$. This completes the proof of
Theorem 1.1.

\end{document}